\numberwithin{equation}{section}
\newlength{\myfboxsep}
\newlength{\mywidth}
\newtheorem{theorem}{Theorem}[section]
\newtheorem{lemma}[theorem]{Lemma}
\newtheorem{definition}[theorem]{Definition}
\newtheorem{proposition}[theorem]{Proposition}
\newtheorem{corollary}[theorem]{Corollary}
\newtheorem*{corollary*}{Corollary}
\theoremstyle{remark}
\newtheorem{remark}{Remark}[section]
\numberwithin{equation}{section}
\newcommand{\R}{\mathbb{R}}
\newcommand{\C}{\mathbb{C}}
\newcommand{\Z}{\mathbb{Z}}
\newcommand{\N}{\mathbb{N}}
\newcommand{\eps}{\epsilon}
\newcommand{\la}{\lambda}
\newcommand{\vth}{\vartheta}
\newcommand{\Pb}[1]{\mathbb{P}\left[#1\right]}
\newcommand{\E}[1]{\mathbb{E}\left[#1\right]}
\newcommand{\Pbn}[1]{\mathbb{P}_n\left[#1\right]}
\newcommand{\En}[1]{\mathbb{E}_n\left[#1\right]}
\newcommand{\Pbt}[1]{\mathbb{P}_t\left[#1\right]}
\newcommand{\Et}[1]{\mathbb{E}_t\left[#1\right]}
\newcommand{\one}[1]{\mathbbm{1}_{\left\{#1\right\}}} %indicator function
\newcommand{\slan}{\sum_{\la \vdash n}\frac{1}{z_\la}}
\renewcommand{\i}{\imath}
 \newcommand{\set}[1]{\left\{#1\right\}}
 \newcommand{\setn}{\{1,\ldots,n\}}
 \renewcommand{\Re}[1]{\mathop{\mathfrak{Re}}\left(#1\right)} % Real part
\newcommand{\floor}[1]{\lfloor #1 \rfloor}
\newcommand{\eq}{\begin{equation}}
\newcommand{\eeq}{\end{equation}}
\renewcommand{\exp}[1]{ e^{#1}}
\newcommand{\abs}[1]{\left| #1 \right|}
\renewcommand{\O}[1]{O\left(#1 \right)} % Big O notation
\renewcommand{\o}[1]{o\left(#1 \right)} % small o notation
\newcommand{\vr}{\varphi}
\begin{document}
\title[Limit shape of random permutations]{The limit shape of random permutations with polynomially growing cycle weights}

\author[A. Cipriani]{Alessandra Cipriani}
\address{Weierstra{\ss}-Institut\\
Mohrenstra{\ss}e 39\\
10117 Berlin\\
Germany}
\email{Alessandra.Cipriani@wias-berlin.de}

\author[D. Zeindler]{Dirk Zeindler}
\address{
Sonderforschungsbereich 701\\
Fakult\"at f\"ur Mathematik\\
Universit\"at Bielefeld\\
Postfach 10 01 31\\
33501 Bielefeld\\
Germany} 
\email{zeindler@math.uni-bielefeld.de}

\date{\today}

\begin{abstract}
In this work we are considering the behaviour of the limit shape of Young diagrams associated to random permutations on the set $\setn$ under a particular class of multiplicative measures
with polynomial growing cycle weights. 
Our method is based on generating functions and complex analysis (saddle point method).
We show that fluctuations near a point behave like a normal random variable and that the joint fluctuations at different points of the limiting shape have an unexpected dependence structure.
We will also compare our approach with the so-called randomization of the cycle counts of permutations and we will study the convergence of the limit shape to a continuous stochastic process.
\end{abstract}
\maketitle

\section{Introduction and main results}
The aim of this paper is to study the limit shape of a random permutation under the generalised Ewens measure 
with polynomially growing cycle weights and the fluctuations at each point of the limit shape. 
The study of such objects has a long history, which started with the papers of Temperley \cite{Temp52} and Vershik \cite{Ve96}. Later on Young diagrams 
have been approached under a different direction, as in the independent works of \cite{KeVe77} and \cite{LoSh77}, which first derived the limit 
shape when the underpinned measure on partitions is the so-called \emph{Plancherel measure}. We will not handle this approach here, even though it presents remarkable connections 
with random matrix theory and random polymers, among others (see for example \cite{DeiftInt}).

We first specify what we define as the limit shape of a permutation.
We denote by $\mathfrak S_n$ the set of permutations on $n$ elements and write each permutation  $\sigma\in \mathfrak  S_n$ as $\sigma = \sigma_1\cdots \sigma_\ell$ 
with $\sigma_j$ disjoint cycles of length $\la_j$.
Disjoint cycles commute and we thus can assume $\la_1\geq\la_2\geq\cdots \geq\la_\ell$.
This assigns to each permutation $\sigma\in\mathfrak S_n$ in a unique way a partition of $n$ and this partition $\la=(\la_1,\la_2,\dots,\la_\ell)$ is called the \emph{cycle type} of $\sigma$. We will indicate that 
$\la$ is such a partition with the notation $\la \vdash n$. We define the size $|\la|:=\sum_{i}\la_i$ (so obviously if $\la \vdash n$ then $\abs{\la}=n$). $\la$ features a nice geometric visualisation by its Young diagram $\Upsilon_\la$. 
This is a left- and bottom-justified diagram of $\ell$ rows with the $j-$th row consisting of $\la_j$ squares,  see Figure~\ref{fig:YoungDiagram}.
\begin{figure}[ht!]
 \centering

\subfigure[\,The Young diagram]
{\hspace{-1pc}
 \includegraphics[height=.2\textheight]{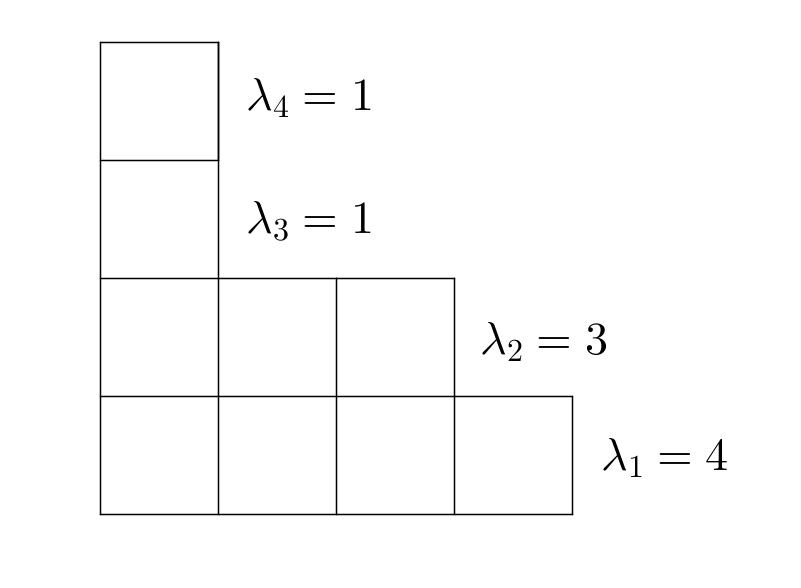}           
\label{fig:YoungDiagram}
 }
\subfigure[\,The shape function $w_n(\cdot)$]
{\hspace{2pc}
 \includegraphics[height=.25\textheight]{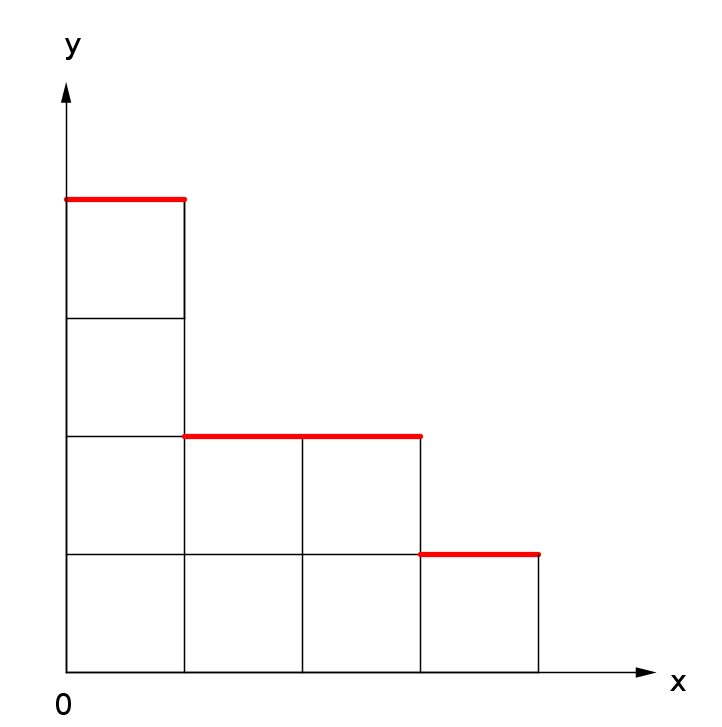}           
\label{fig:limit_shape}
 }
% 
% \begin{subfigure}[b]{0.3\textwidth}
%                 \centering
%                 \includegraphics{Young}
%                 \caption{The Young diagram}
% \label{fig:YoungDiagram}
% \end{subfigure}
% \qquad  \qquad  \qquad
% \begin{subfigure}[b]{0.3\textwidth}
%                 \centering
%                 \includegraphics{young_axes}
%                 \caption{The Limit shape}\label{fig:limit_shape}
% \end{subfigure}
%
\caption{Illustration of the Young diagram and the shape of \newline $\sigma=(3578)(129)(4)(6)\in  \mathfrak S_9$}
\end{figure} 
It is clear that the area of $\Upsilon_\la$ is $n$ if $\la\vdash n$. 
 After introducing a coordinate system as in Figure~\ref{fig:limit_shape}, we see that
the upper boundary of a Young diagram $\Upsilon_\la$ is a piecewise constant and right continuous function $w_n:\R^+\to\N$ with 
\begin{align}
\label{eq:def_w_n_part}
w_n(x)
:=
\sum_{j=1}^n \one{\la_j\geq x}
%\sum_{k>x}C_k
\end{align}
The cycle type of a permutation becomes a random partition if we endow the space 
$\mathfrak S_n$ with a probability measure $\mathbb{P}_n$. What we are then interested in studying is the now random shape $w_n(\cdot)$ as $n\to+\infty$, and more specifically to determine its limit shape.
The limit shape with respect to a sequence of probability measures $\mathbb{P}_n$ on $\mathfrak S_n$ 
(and sequences of positive real numbers $A_n$ and $B_n$ with $A_n \cdot B_n =n$) is understood as a function $w_\infty:\R^+ \to \R^+$ 
such that for each $\eps,\delta>0$
\begin{align}
\label{eq:def_limit_shape}
\lim_{n\to+\infty} \mathbb{P}_n \left[\set{\sigma\in\mathfrak{S}_n:\, \sup_{x\geq \delta} |A_n^{-1} w_n (B_n x)- w_\infty(x)| \leq \eps }  \right]
= 1.
\end{align}
%The limit shape $w_*(x)$ is of course dependent on the choice of $A_n$ and $B_n$, but this dependence is typically hidden.
The assumption $A_n \cdot B_n =n$ ensures that the area under the rescaled Young diagram is $1$.
One of the most frequent choices is $A_n=B_n=n^{1/2}$, but we will see that it's useful to adjust the choice of $A_n$ and $B_n$ to the measures $\mathbb{P}_n$. 
Equation~\eqref{eq:def_limit_shape} can be viewed as a law of large numbers for the process $w_n(\cdot)$. The next natural question is then whether fluctuations satisfy a central limit theorem, namely whether 
$$
A_n w_n(B_n x)-w_\infty(x)
$$
converges (after centering and normalization) in distribution to a Gaussian process on the space of c\`adl\`ag functions, for example. Of course the role of the probability distribution with which we equip the set of partitions will be crucial to this end.\\

In this paper, we work with the following measure on $\mathfrak S_n$:
\begin{align}
\label{eq:def_weighted_pb_la}
 \Pbn{\sigma}=\frac{1}{h_n n!}\prod_{j= 1}^\ell \vth_{\la_j}.
\end{align}
where $(\la_1,\dots,\la_\ell)$ is the cycle type of $\sigma$, 
$(\vth_m)_{m\geq 1}$ is a sequence of non-negative weights and $h_n$ is a normalization constant ($h_0$ is defined to be $1$). From time to time we will also use $\vth_0:=0$ introduced as convention.

This measure has recently appeared in mathematical physics for a model of the quantum gas in statistical
mechanics and has a possible connection with the Bose-Einstein condensation (see e.g. \cite{BeUeVe11} and \cite{ErUe11}). 
Classical cases of this measure are the uniform measure ($\vth_m \equiv 1$) and
the Ewens measure ($\vth_m \equiv \vth$). The uniform measure is well studied and has a long history
(see e.g. the first chapter of \cite{ABT02} for a detailed account with references).
 The Ewens measure originally appeared in population genetics, 
see \cite{Ew72}, but has also various applications through its connection with Kingman's coalescent process, see \cite{Ho87}.
The measure in \eqref{eq:def_weighted_pb_la} also has some similarities to multiplicative measure for partitions, see for instance \cite{Bo11a}. 
It is clear that we have to make some assumptions on the sequence $(\vth_m)_{m\geq 1}$ to be able to study the behaviour as $n\to+\infty$.
We use in this paper the weights $\vth_m$ 
\begin{equation}\label{eq:weights}
 \vth_m=(\log m)^j \frac{m^\alpha}{\Gamma(\alpha+1)}+\O{m^\beta},\quad j\in \N
 \end{equation}

with some $\alpha>0$ and $0\leq \beta < \alpha/2$. 
We would like to point out that the requirement $0\leq \beta < \alpha/2$ 
and the normalisation constant $\Gamma(\alpha+1)$
are not essential and it only simplifies the notation and the computations. 
In fact we can study without further mathematical problems the case $\vth_m \sim \mathtt{const.}(\log m)^j  m^\alpha$, see Remark~\ref{rem:general_weights}. We get

% \begin{color}{blue}
% More precisely, one can push forward the measure $\mathbb{P}_n$ to a measure $\widetilde{\mathbb{P}}_n$ on the set of partitions of $n$ with
%  %
%  \begin{align}
%   \widetilde{\mathbb{P}}_n[\la]
%   =
%   \frac{1}{\widetilde{h}_n n!}\prod_{k= 1}^n \frac{1}{C_k!}\left(\frac{\vth_{m}}{m}\right)^{C_k},
% \end{align}
% %
% where $\la$ is a partition of $n$ and $C_k$ is the number of parts of length $k$ (see Section~\ref{sec:cycle_counts}). These Gibbs distributions have been treated extensively in the literature (\cite{BeUeVe11}, \cite{ErUe11} for example).
% One thus can work with $\mathbb{P}_n$ or with $\widetilde{\mathbb{P}}_n$. We prefer here to use $\mathbb{P}_n$. 
% \end{color}

\begin{theorem}
\label{thm:limit_shape_saddle}\mbox{}
We define 
\begin{align}\label{eq:def_n*}
 n^*:= 
 (1+\alpha)^{-j}\left(\frac{n}{\left(\log n\right)^j}\right)^{\frac1{\alpha+1}}
 \ \text{ and } \
 \overline n= \frac{n}{n^*}= (1+\alpha)^{j} n\left(\frac{n}{\left(\log n\right)^j}\right)^{-\frac1{\alpha+1}}.
\end{align}
We then have
\begin{enumerate}
 \item The limit shape exists for the process $w_n(x)$ as $n\to\infty$ with respect to $\mathbb{P}_n$ and the weights in \eqref{eq:weights} with the scaling $A_n = \overline n$ and $B_n=n^*$. 
 The limit shape is given by
 $$
 w_\infty^{\mathbf s}(x):=\frac{\Gamma(\alpha,x)}{\Gamma(\alpha+1)},
 $$
 where $\Gamma(\alpha,x)$ denotes the upper incomplete Gamma function.  
 \item The fluctuations at a point $x$ of the limit shape behave like
 \begin{eqnarray*}&&
\widetilde w_n^{\mathbf s}(x)
:=
\frac{w_n(xn^*)-\overline n\left( w_\infty^{\mathbf s}(x)+z_n^{\mathbf s}(x)\right)}{(\overline n)^{1/2}}
\stackrel{\mathcal L}{\longrightarrow}
\mathcal N\left(0, \sigma_\infty^2(x)\right)
\end{eqnarray*}
with
 $$
 \sigma_\infty^2(x):= \frac{\Gamma(\alpha,x)}{\Gamma(\alpha+1)}-\frac{\Gamma(\alpha+1,\,x)^2}{2 \Gamma(\alpha+1)\Gamma(\alpha+2)}
 $$
 and $z_n^{\mathbf s}(x)=\o{1}$.
 \end{enumerate}
\end{theorem}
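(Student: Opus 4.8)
The plan is to work entirely on the level of generating functions and to extract the asymptotics by the saddle point method applied to a carefully chosen contour integral. Recall that the cycle type of $\sigma$ distributed according to $\mathbb{P}_n$ has the same law as the vector of cycle counts $(C_m)_{m\geq 1}$, where $C_m$ is the number of cycles of length $m$, and that the weighted measure in \eqref{eq:def_weighted_pb_la} has the classical generating function identity
\begin{align*}
\sum_{n\geq 0} h_n t^n = \exp{\sum_{m\geq 1}\frac{\vth_m}{m} t^m}.
\end{align*}
The first step is to write $w_n(x) = \sum_{m\geq \lceil x \rceil} C_m$ and, more usefully, to express expectations of functions of the $C_m$ as Cauchy integrals. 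Concretely, for a suitable formal/analytic variable one has
\begin{align*}
\En{\prod_{m\geq 1} s_m^{C_m}} = \frac{1}{h_n}[t^n]\exp{\sum_{m\geq 1}\frac{\vth_m s_m}{m} t^m},
\end{align*}
so that both $\En{w_n(xn^*)}$ and the Laplace transform $\En{\exp{-\tau w_n(xn^*)}}$ (taking $s_m = \exp{-\tau}$ for $m\geq xn^*$ and $s_m=1$ otherwise) reduce to ratios of coefficient extractions. The choice of the saddle point $r = r_n$ solving $\sum_{m\geq 1}\vth_m r^m = n$ is dictated by requiring $t^{-n}\exp{\sum \vth_m t^m /m}$ to be stationary; with the weights \eqref{eq:weights} a Mellin/Euler–Maclaurin estimate of $\sum_m \vth_m r^m$ as $r\uparrow 1$ gives $\sum_m \vth_m r^m \sim \Gamma(\alpha+1)(\log\frac{1}{1-r})^j (1-r)^{-\alpha-1}\cdot(\text{const})$, which after inversion yields precisely $1-r_n \sim 1/n^*$ with $n^*$ as in \eqref{eq:def_n*}, and $\sum_{m\geq 1}\vth_m r_n^m/m \sim \overline n$. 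This is the computation that fixes the scalings $A_n=\overline n$, $B_n=n^*$.

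The second step is the law of large numbers, part (1). Here I would show that $\En{w_n(xn^*)}/\overline n \to w_\infty^{\mathbf s}(x)$ by evaluating $\En{w_n(xn^*)} = \sum_{m\geq xn^*}\En{C_m}$ with $\En{C_m} \approx \frac{\vth_m r_n^m}{m}\cdot\frac{h_{n-m}}{h_n}$ and using the saddle point estimate $h_{n-m}/h_n \approx r_n^{-m}$ up to subexponential corrections; the sum $\sum_{m\geq xn^*}\frac{\vth_m}{m} r_n^m$, again by Euler–Maclaurin with $\vth_m\sim (\log m)^j m^\alpha/\Gamma(\alpha+1)$ and $r_n^m = \exp{-m/n^*}(1+o(1))$, converges after division by $\overline n$ to $\int_x^\infty \frac{u^{\alpha-1}}{\Gamma(\alpha)}\exp{-u}\,du = \Gamma(\alpha,x)/\Gamma(\alpha+1) = w_\infty^{\mathbf s}(x)$. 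Concentration, i.e. upgrading convergence in probability to the uniform statement \eqref{eq:def_limit_shape}, follows from a variance bound $\var_n(w_n(xn^*)) = \O{\overline n}$ (so that fluctuations are of lower order than the mean, which is of order $\overline n$) together with monotonicity of $x\mapsto w_n(x)$ and a standard net/union-bound argument on a $\delta$-separated grid of $x$-values — this is routine once the one-point estimates are in place.

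The third step, part (2), is the central limit theorem at a fixed $x$. The approach is to analyze the Laplace transform: set $\tau = t/(\overline n)^{1/2}$ and study
\begin{align*}
\En{\exp{-\frac{t}{(\overline n)^{1/2}}\bigl(w_n(xn^*)-\overline n(w_\infty^{\mathbf s}(x)+z_n^{\mathbf s}(x))\bigr)}}
\end{align*}
via the Cauchy integral representation, moving the contour to pass through a shifted saddle point $r_n(\tau)$ that depends on $\tau$ through the modified exponent $\sum_{m< xn^*}\frac{\vth_m}{m}t^m + \exp{-\tau}\sum_{m\geq xn^*}\frac{\vth_m}{m}t^m$. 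Expanding the exponent to second order in $\tau$ around $\tau=0$ produces, after the saddle point integration, a Gaussian factor $\exp{\frac12 t^2\sigma^2_n(x)}$ where $\sigma^2_n(x)$ is the (rescaled) asymptotic variance; the linear-in-$\tau$ term is exactly what the deterministic correction $z_n^{\mathbf s}(x)=\o{1}$ absorbs, and it must be carried along because $\En{w_n(xn^*)}$ agrees with $\overline n\, w_\infty^{\mathbf s}(x)$ only to leading order. Computing $\sigma^2_\infty(x) = \lim_n \sigma^2_n(x)$ requires two ingredients: the "direct" variance contribution $\sum_{m\geq xn^*}\frac{\vth_m}{m}r_n^m/\overline n \to \Gamma(\alpha,x)/\Gamma(\alpha+1)$, and a "saddle-correction" term coming from the fact that the saddle moves with $\tau$, which contributes $-\bigl(\sum_{m\geq xn^*}\vth_m r_n^m/n\bigr)^2\cdot(\text{const})$; evaluating $\sum_{m\geq xn^*}\vth_m r_n^m \sim \overline n \cdot n^* \int_x^\infty u^\alpha e^{-u}du/\Gamma(\alpha+1) \cdot(\ldots)$ and dividing appropriately gives $\Gamma(\alpha+1,x)/\Gamma(\alpha+1)$ in the right normalization, and collecting the two pieces yields exactly
\begin{align*}
\sigma^2_\infty(x) = \frac{\Gamma(\alpha,x)}{\Gamma(\alpha+1)} - \frac{\Gamma(\alpha+1,x)^2}{2\Gamma(\alpha+1)\Gamma(\alpha+2)}.
\end{align*}
The main obstacle, I expect, is making the saddle point analysis uniform and rigorous: one must control the integrand away from the saddle (showing the contribution of the arc $|t|=r_n$, $|\arg t|>\eps$, is negligible — this uses lower bounds on $\Re{\sum\vth_m(1-t^m)/m}$ of the type one gets from the positivity of the $\vth_m$ and a lemma on $\sum\vth_m\cos(m\phi)$), and simultaneously track the $\tau$-dependence of the saddle $r_n(\tau)$ and of all the error terms so that the second-order Taylor expansion of the exponent is justified with $o(1)$ remainder after division by $\overline n$. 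The asymptotic evaluation of the relevant weighted sums $\sum_m \vth_m m^{-k} r_n^m$ for $k=0,1$ restricted to $m\geq xn^*$, via Mellin transforms or Euler–Maclaurin, is the technical heart that feeds both the limit shape and the variance, and handling the logarithmic factors $(\log m)^j$ carefully (they interact with the $\log$ appearing in $1-r_n$) is where most of the bookkeeping lies.
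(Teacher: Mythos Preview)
Your proposal is correct and follows essentially the same route as the paper: express the Laplace transform of $w_n(xn^*)$ as a coefficient extraction via the cycle-index identity, apply the saddle-point method with a saddle $r_n(\tau)$ that shifts with the Laplace parameter, and read off mean and variance from the first- and second-order terms in $\tau$; the two-piece variance computation you describe (the ``direct'' Poisson-like term $\Gamma(\alpha,x)/\Gamma(\alpha+1)$ plus the negative saddle-shift correction involving $\Gamma(\alpha+1,x)^2$) is exactly what the paper carries out in Proposition~4.7 and the proof of Proposition~4.4. The only organizational difference is that the paper packages the saddle-point step through a notion of \emph{log-$n$-admissibility} (with the explicit ansatz $r_n=e^{-p_n(1-sq_n)}$) and obtains part~(1) directly from the pointwise CLT together with the monotonicity/net argument you sketch, rather than via a separate computation of $\En{C_m}$ and a variance bound. (Minor slip: the exact identity is $\En{C_m}=\frac{\vth_m}{m}\frac{h_{n-m}}{h_n}$ and the saddle estimate is $h_{n-m}/h_n\approx r_n^{m}$, not $r_n^{-m}$; your final tail sum $\sum_{m\geq xn^*}\frac{\vth_m}{m}r_n^m$ is nonetheless the right object.)
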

\begin{remark}
 The expectation of $w_\infty^{\mathbf s}$ can be expanded asymptotically also to terms of lower order with the same argument. 
 This will actually be important in the proof of Thm.~\ref{thm:func_CLT_pt}. For the time being however we want to stress only the leading coefficient of the expansion.
\end{remark}
% 
% \begin{color}{red}
% One can push forward the measure $\mathbb{P}_n$ to a measure $\widetilde{\mathbb{P}}_n$ on the set of partitions of $n$ with
%  %
%  \begin{align}
%   \widetilde{\mathbb{P}}_n[\la]
%   =
%   \frac{1}{\widetilde{h}_n n!}\prod_{k= 1}^n \frac{1}{C_k!}\left(\frac{\vth_{m}}{m}\right)^{C_k},
% \end{align}
% %
% where $\la$ is a partition of $n$ and $C_k$ is the number of parts of length $k$ (see Section~\ref{sec:cycle_counts}). These Gibbs distributions have been treated extensively in the literature (\cite{BeUeVe11}, \cite{ErUe11} for example).
% One thus can work with $\mathbb{P}_n$ or with $\widetilde{\mathbb{P}}_n$. We prefer here to use $\mathbb{P}_n$. 
% \end{color}

Theorem~\ref{thm:limit_shape_saddle} was already obtained in the special case $j=0$, i.e. $\vth_m \sim m^\alpha$,  by Erlihson and Granovsky in \cite{ErGr08} in the context of Gibbs distributions on integer partitions and 
as we were writing the present paper, we were made aware of their work.
To be precise, one can push forward the measure $\mathbb{P}_n$ to a measure $\widetilde{\mathbb{P}}_n$ on the set of partitions of $n$ with
 \begin{align}
  \widetilde{\mathbb{P}}_n[\la]
  =
  \frac{1}{\widetilde{h}_n n!}\prod_{k= 1}^n \frac{1}{C_k!}\left(\frac{\vth_{m}}{m}\right)^{C_k},
\end{align}
where $\la$ is a partition of $n$ and $C_k$ is the number of parts of length $k$ (see Section~\ref{sec:cycle_counts}). 
These Gibbs distributions have been treated extensively in the literature (\cite{BeUeVe11}, \cite{ErUe11} for example).
One thus can work with $\mathbb{P}_n$ or with $\widetilde{\mathbb{P}}_n$. We prefer here to use $\mathbb{P}_n$.

% \begin{color}{blue}
%  The weights in \eqref{eq:weights} have already been considered by Erlihson and Granovsky in \cite{ErGr08} in the context of Gibbs distributions on integer partitions and 
% as we were writing the present paper, we were made aware of their work. Indeed, \cite[Theorem~4.1]{ErGr08} covers the case $j=0$ (in our notation). 
% \end{color}

The argumentation of Erlihson and Granovsky in \cite{ErGr08}
is stochastic and is based on randomisation: this technique has been successfully introduced by \cite{Fri93} and used also in particular by \cite{Bo11a} as a tool to investigate combinatorial structures, 
and later applied in many contexts. However, the approach in this paper is slightly different and bases on complex analysis and uses the saddle-point method 
as described in Section~\ref{sec:estmgf}. This method was used in \cite{ErUe11} and \cite{MaNiZe11} and 
an introduction can be found for instance in \cite[Section~VIII]{FlSe09}.
Our Ansatz enables us to reprove Theorem~\ref{thm:limit_shape_saddle}, but with two big advantages.
First, our computations are much simpler than the one in \cite{ErGr08}. Second, we get almost for free large deviations estimates. More precisely
 \begin{proposition}\label{prop:forall}
We have for all $a=\O{\overline n}$ and $\delta=\O{(\overline n)^{-1/2}}$
\begin{align*}
&\Pb{ \abs{\widetilde{w}^{\mathbf s}_n(x)-a}<\epsilon}
=
\left(1-\eps^{-2}(1+\delta)\right)\mathrm{exp}\left(-a^2/2+\O{\delta+\epsilon a}\right).
\end{align*}
The error terms are absolute.
\end{proposition}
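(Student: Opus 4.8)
The plan is to derive Proposition~\ref{prop:forall} from the same saddle-point machinery that underlies Theorem~\ref{thm:limit_shape_saddle}(2), keeping track of error terms uniformly in the shift $a$ and the window width $\delta$. The starting point is the fact that $w_n(xn^*) = \sum_{j \geq xn^*} C_j$, where $C_j$ is the number of $j$-cycles of $\sigma$, so that $\widetilde w_n^{\mathbf s}(x)$ is, up to the deterministic centering and the scaling $(\overline n)^{-1/2}$, a linear statistic of the cycle counts. Under $\mathbb{P}_n$ the generating function of such statistics is accessible: the bivariate generating function $\sum_n h_n \mathbb{P}_n[\cdot] t^n$ weighted by a formal variable marking $w_n(xn^*)$ factorises over cycle lengths as an exponential $\exp\big(\sum_m \vartheta_m t^m/m \cdot (\text{marker}_m)\big)$, exactly as in Section~\ref{sec:estmgf}. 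First I would write $\mathbb{P}_n[\,\widetilde w_n^{\mathbf s}(x) \in (a-\epsilon, a+\epsilon)\,]$ as a Cauchy integral extracting the $t^n$ coefficient, then insert a second integral (or a Fourier inversion in the marking variable) to localise the value of the linear statistic in the window of width $2\epsilon$ around $\overline n(w_\infty^{\mathbf s}(x) + z_n^{\mathbf s}(x)) + a(\overline n)^{1/2}$.

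Next I would run the saddle-point analysis on the $t$-integral. The saddle $r_n$ is the one already identified in the proof of Theorem~\ref{thm:limit_shape_saddle}, chosen so that $\sum_m \vartheta_m r_n^m = n$; with the weights \eqref{eq:weights} this forces $r_n = \exp{-n^*{}^{-1}(1+o(1))}$ in the appropriate normalisation, producing the scales $n^*$ and $\overline n$. Expanding the exponent to second order around $r_n$ gives a Gaussian integral in $t$ whose leading term reproduces the density $h_n^{-1} n!^{-1} \cdot (\text{prefactor})$, while the marking integral contributes a factor that, after the quadratic expansion, is exactly $\exp{-a^2/2}\,(1 + \O{\delta})$ times the window measure $\eps(1+\delta)$ — the $1 - \eps^{-2}(1+\delta)$ shape comes from the standard local-CLT-with-window estimate (the probability of an interval of half-width $\eps$ under a density that is $\approx (\overline n)^{-1/2}$ times a standard Gaussian, with the Berry–Esseen-type correction contributing the $\eps^{-2}$ and the residual $\delta$ terms). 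The assumptions $a = \O{\overline n}$ and $\delta = \O{(\overline n)^{-1/2}}$ are precisely what keep the shifted saddle inside the region where the quadratic expansion is valid and where the third-order remainder is $\O{\delta + \eps a}$.

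The main obstacle I expect is the uniformity of all error terms in $a$: the centering is shifted by $a(\overline n)^{1/2}$, which moves the effective saddle point, and one must check that the second derivative of the exponent (the variance $\sigma_\infty^2(x)\,\overline n$) is stable under this shift and that the cubic and higher terms in the Taylor expansion remain $o(1)$ after multiplication by the relevant powers of $\overline n$ — this is what pins down the range $a = \O{\overline n}$ rather than a smaller one. A secondary technical point is controlling the integral away from the saddle (the "non-central" arc $|t| = r_n$, $|\arg t|$ bounded below), where one needs the decay estimate for $|\exp{\sum_m \vartheta_m t^m/m}|$ from Section~\ref{sec:estmgf}; since the marking variable only multiplies a subsum of the exponent by a unimodular factor, that decay is not destroyed, but one should state this explicitly. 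Once these two points are in place, collecting the Gaussian prefactor, the window factor $\eps(1+\delta)$, the $\eps^{-2}$ correction, and the exponential $\exp{-a^2/2 + \O{\delta + \eps a}}$ gives the claimed identity with absolute constants.
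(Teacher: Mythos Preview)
Your approach differs substantially from the paper's, and there is a genuine gap in how you account for the prefactor $1-\eps^{-2}(1+\delta)$.

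The paper does not perform a second (Fourier-type) localisation integral at all. It takes as input the Laplace-transform estimate of Proposition~\ref{prop:relies2},
\[
\log \En{\mathrm{exp}\bigl(-s\,\widetilde w_n^{\mathbf s}(x)\bigr)}=\sigma_\infty^2(x)\,\frac{s^2}{2}+\O{(\overline n)^{-1/2}s^3},
\]
which already encapsulates the entire saddle-point analysis in $t$. From that point on the argument is purely probabilistic: one performs an exponential tilt (Cram\'er change of measure), choosing the tilt parameter so that the tilted mean of $\widetilde w_n^{\mathbf s}(x)$ equals $a$, and then applies Chebyshev's inequality under the tilted law. The factor $1-\eps^{-2}(1+\delta)$ is precisely the Chebyshev bound $1-\var_{\mathbb Q}(\widetilde w_n^{\mathbf s})/\eps^{2}$, where $(1+\delta)$ is the variance under the tilted measure $\mathbb Q$; the factor $\mathrm{exp}\bigl(-a^{2}/2+\O{\delta+\eps a}\bigr)$ comes from undoing the tilt on the event $\{|\widetilde w_n^{\mathbf s}(x)-a|<\eps\}$. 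This is exactly the strategy of \cite[Theorem~4.1]{MaNiZe11} that the paper invokes.

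Your attribution of the $\eps^{-2}$ term to a ``Berry--Esseen-type correction'' inside a local-CLT window estimate is incorrect: Berry--Esseen corrections enter as additive errors of order $(\overline n)^{-1/2}$, not as multiplicative factors of the shape $1-\eps^{-2}(\cdot)$, and a genuine local-CLT-with-window estimate would give a leading term proportional to $\eps\,\mathrm{exp}(-a^{2}/2)$ rather than the Chebyshev form. Your bivariate saddle-point/Fourier scheme could in principle be pushed to a local limit theorem, but it would not naturally deliver the stated expression; the very shape of the right-hand side is the signature of the second-moment method after tilting. The right fix is to drop the second contour integral, quote Proposition~\ref{prop:relies2} for the cumulant generating function, tilt by $s\approx a/\sigma_\infty^{2}(x)$, and apply Chebyshev.
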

We prove this by studying the cumulant generating function
\begin{align}
\widetilde{\Lambda}(s)
:=
 \log \mathbb{E}_n \left[\mathrm{exp}\bigl(s \widetilde{w}^{\mathbf s}_n(x)\bigr)\right].
\end{align}
This is an important difference to \cite{ErGr08}. Erlihson and Granovsky directly consider the distribution of $\widetilde w_n^{\mathbf s}(x)$
and studying $\widetilde{\Lambda}(s)$ with their method is computationally harder.
% \begin{color}{blue}
% We have
% %
% \begin{proposition}\label{prop:relies}
% We have for $s=O(1)$ and with respect to $\mathbb{P}_n$ as $n\to\infty$
% %
% $$
% \Lambda(s)=\frac{s^2}{2}+\O{(n^*)^{-\alpha/2}s^3}. 
% $$
% \end{proposition}
% %
% Proposition~\ref{prop:forall} now can easily be deduced from Proposition~\ref{prop:relies}, see Section~\ref{sec:large_dev}.
% \end{color}
In fact, we can compute the behaviour of all cumulants. 
\begin{theorem}\label{thm:cumulants}
Let $s^* = s(\overline n)^{-\frac{1}{2}}$ and 
\begin{equation}
 \label{eq:cum_generating_func}\Lambda(s)
:=
\En{\mathrm{exp}\bigl(-s^\ast w_n(xn^\ast)\bigr)}
=
\sum_{m\geq 1}q_m \frac{s^m}{m!}.
\end{equation}
We then have for $m\geq 2$
\begin{align}
 q_m = \kappa_m(1+o(1))
\end{align}
with
\begin{eqnarray}
 \kappa_m
&=&
% [s^m]\left( \frac{(n^*)^\alpha}{\alpha} \left(1-s^*\frac{\Gamma(\alpha+1,\,x)}{\Gamma(\alpha+2)}\right)^{-\alpha} \right. \nonumber\\
% &&+\left. \frac{\left(\exp{-s^*}-1\right)}{\Gamma(\alpha+1)}\sum_{k\geq 0}
% \frac{\Gamma(\alpha+k,\,x)}{k!}\left(\frac{\Gamma(\alpha+1,\,x)}{\Gamma(\alpha+2)}s^*\right)^k\right)\label{eq:term_cumulants}.\\
%&=&
(\overline n)^{1-\frac{m}{2}}[s^m]
\left[
\left(1-s\frac{\Gamma(\alpha+1,\,x)}{\Gamma(\alpha+2)}\right)^{-\alpha}\right.\nonumber\\
&&+
\left. (\exp{-s}-1)\sum_{k=0}^\infty  \frac{s^k}{k!}\frac{\Gamma(\alpha+k,x)}{\Gamma(\alpha+1)} \left(-\frac{\Gamma(\alpha+1,\,x)}{\Gamma(\alpha+2)}\right)^k
\right].
\end{eqnarray}
\end{theorem}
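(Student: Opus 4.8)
Writing $\Lambda(s)$ for the cumulant generating function $\log\En{\exp{-s^\ast w_n(xn^\ast)}}$, so that $q_m=m!\,[s^m]\Lambda(s)$, the plan is to express $\Lambda$ as the logarithm of a ratio of Taylor coefficients and to extract $q_m$ by the saddle point method of Section~\ref{sec:estmgf}. First I would record the generating‑function identity, which follows from the cycle‑index (exponential) formula for the measure \eqref{eq:def_weighted_pb_la} exactly as in Section~\ref{sec:cycle_counts}: with $g(t):=\sum_{m\ge1}\frac{\vth_m}{m}t^m$, $b:=\lceil xn^\ast\rceil$, $g_{>}(t):=\sum_{m\ge b}\frac{\vth_m}{m}t^m$ and $\mu:=\exp{-s^\ast}-1$, and using $w_n(xn^\ast)=\sum_{m\ge b}C_m$ (so the factor $\exp{-s^\ast}$ tilts $\vth_m$ for each $m\ge b$),
\[
\En{\exp{-s^\ast w_n(xn^\ast)}}=\frac{[t^n]\exp{g(t)+\mu\,g_{>}(t)}}{[t^n]\exp{g(t)}},\qquad h_n=[t^n]\exp{g(t)}.
\]

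Second, I would apply the saddle point method to numerator and denominator. Let $r_n\in(0,1)$ be the dominant saddle of the denominator, i.e.\ the solution of $r_ng'(r_n)=\sum_{m\ge1}\vth_m r_n^m=n$ already used in Theorem~\ref{thm:limit_shape_saddle}; a Laplace/Mellin evaluation of the sums $\sum_m m^k\vth_m r_n^m$ from the weights \eqref{eq:weights} identifies $n^\ast$ in \eqref{eq:def_n*} as the scale of $1-r_n$, makes $\overline n=n/n^\ast$ of the order of the expected number of cycles, and — the estimates really needed — yields
\[
r_n^{\,k}g_{>}^{(k)}(r_n)=\overline n\,(n^\ast)^{k}\,\frac{\Gamma(\alpha+k,\,x)}{\Gamma(\alpha+1)}\,(1+o(1))\qquad(k\ge0),
\]
with the analogous formula for $g^{(k)}$ carrying $\Gamma(\alpha+k)$ in place of $\Gamma(\alpha+k,x)$, and $B_g(r_n):=r_n^2g''(r_n)+r_ng'(r_n)$ of order $\overline n(n^\ast)^2$. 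Since $\mu=\O{s(\overline n)^{-1/2}}$, the perturbation $\mu g_{>}$ is a small one and the circle estimates of Section~\ref{sec:estmgf} remain valid for $F_s:=g+\mu g_{>}$, so that $[t^n]\exp{F_s(t)}=\exp{F_s(\rho_s)}\,\rho_s^{-n}\,(2\pi B_{F_s}(\rho_s))^{-1/2}(1+o(1))$, where $\rho_s$ is the saddle of $F_s$ (cf.\ \cite{ErUe11,MaNiZe11} and \cite[Section~VIII]{FlSe09}).

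Third, I would take logarithms and expand the numerator's saddle about $r_n$. With $\Phi_0(\rho):=g(\rho)-n\log\rho$ (so $\Phi_0'(r_n)=0$), the saddle $\rho_s$ solves $\Phi_0'(\rho_s)+\mu g_{>}'(\rho_s)=0$; Lagrange inversion expresses $\rho_s-r_n$ as a power series in $\mu$ with leading term $-\mu\,g_{>}'(r_n)/\Phi_0''(r_n)=-\mu\,a\,(n^\ast)^{-1}(1+o(1))$, $a=\Gamma(\alpha+1,x)/\Gamma(\alpha+2)$, and
\[
\Lambda(s)=\mu\,g_{>}(r_n)+\bigl[\Phi_0(\rho_s)-\Phi_0(r_n)+\mu\bigl(g_{>}(\rho_s)-g_{>}(r_n)\bigr)\bigr]-\tfrac12\log\frac{B_{F_s}(\rho_s)}{B_g(r_n)}+o(1).
\]
The decisive feature, and the source of the shape of $\kappa_m$, is that $r_n^{k}\Phi_0^{(k)}(r_n)$ and $r_n^{k}g_{>}^{(k)}(r_n)$ are all of order $\overline n(n^\ast)^{k}$ while the saddle shift is of order $s(\overline n)^{-1/2}(n^\ast)^{-1}$, so that \emph{every} term of the Taylor series of the bracket enters the coefficient of $s^m$ at the common order $(\overline n)^{1-m/2}$: the whole infinite series must be resummed. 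Inserting the asymptotics of the second step, the powers $(n^\ast)^{\pm k}$ cancel, the sum over $k$ collapses via $\sum_{k\ge0}\frac{\Gamma(\alpha+k)}{\Gamma(\alpha)k!}y^k=(1-y)^{-\alpha}$, and after substituting $s^\ast=s(\overline n)^{-1/2}$ and collecting, the powers of $n^\ast$ and the factorials conspire so that $m!\,[s^m]\Lambda(s)$ equals $(\overline n)^{1-m/2}$ times $[s^m]$ of $(1-as)^{-\alpha}+(\exp{-s}-1)\sum_{k\ge0}\frac{s^k}{k!}\frac{\Gamma(\alpha+k,x)}{\Gamma(\alpha+1)}(-a)^k$, up to a relative error $o(1)$; the prefactor ratio $\log(B_{F_s}/B_g)$ and the saddle point errors are absorbed into this $o(1)$. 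This gives $q_m=\kappa_m(1+o(1))$ for $m\ge2$; $m=1$ is excluded because at that order the centering of $w_n$ is not resolved ($q_1$ and $\kappa_1$ are both of order $\sqrt{\overline n}$ but do not coincide).

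The main obstacle is exactly this resummation. In Theorem~\ref{thm:limit_shape_saddle}\,(2) and Proposition~\ref{prop:forall} only the first two or three terms of the expansion are relevant; here one must (i) control the Lagrange inversion producing $\rho_s$ from $\mu$ to all orders, and (ii) establish $r_n^{k}g_{>}^{(k)}(r_n)=\overline n(n^\ast)^{k}\Gamma(\alpha+k,x)/\Gamma(\alpha+1)(1+o(1))$ and its $g$‑analogue with errors that are $o(1)$ \emph{uniformly in $k$} over the relevant range, so that term‑by‑term passage to the limit inside the $s$‑series is legitimate. A secondary, routine point is to make the saddle point approximation for $\exp{F_s}$ uniform for $s$ in a fixed neighbourhood of the origin (equivalently, for $|s|$ growing slowly with $n$), which follows by checking that the circle estimates of Section~\ref{sec:estmgf} are stable under the harmless perturbation $\mu g_{>}$, $\mu\to0$.
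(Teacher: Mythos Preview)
Your approach is in principle correct and close in spirit to the paper's, but the implementation differs in one important respect. You fix the unperturbed saddle $r_n$ (the solution of $r_ng'(r_n)=n$) and then recover the perturbed saddle $\rho_s$ by Lagrange inversion in $\mu=\exp{-s^\ast}-1$, which forces you to control infinitely many derivatives $r_n^{k}g^{(k)}(r_n)$, $r_n^{k}g_{>}^{(k)}(r_n)$ uniformly in $k$ and to resum. The paper instead builds the $s$–dependence directly into the saddle: it takes $r_n=\exp{-v_n}$ with $v_n=p_n(1-sq_n)$, i.e.\ an \emph{explicitly linear} function of $s$ (Lemma~\ref{lemma:funct_admiss}), and verifies the weak saddle condition $a_n(r_n)=n+\o{\sqrt{b_n(r_n)}}$. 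Then the asymptotic $g_\Theta(r_n)\sim \tfrac{v_n^{-\alpha}}{\Gamma(\alpha+1)}(\ldots)$ from Lemma~\ref{lemma:polylog_asymp} produces $(1-sq_n)^{-\alpha}$ in closed form, and Lemma~\ref{lemma:approxsums} with $\widetilde q_n=sq_n$ yields the expansion $\sum_{k\ge0}\Gamma(\alpha+k,x)(-sq_n)^k/k!$ for the tail directly (Proposition~\ref{prop:approx_scaling}). Extracting $[s^m]$ of each of the three pieces $g_\Theta(r_n)$, $(\exp{-s^\ast}-1)g_{>}(r_n)$, $-n\log r_n$ and of $\log b_n(r_n)$ is then immediate; no Lagrange inversion, no uniform-in-$k$ bounds, and no resummation identity is needed.

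What your route buys is conceptual transparency (one sees exactly how the two summands of $\kappa_m$ arise from $\Phi_0$ and from $\mu g_{>}$ respectively); what the paper's route buys is a much shorter and cleaner computation, since the choice of an $s$–linear approximate saddle turns the whole problem into reading off Taylor coefficients of $(1-sa)^{-\alpha}$ and of the series in Proposition~\ref{prop:approx_scaling}. The technical concerns you flag (uniformity in $k$, stability of the circle estimates under the perturbation $\mu g_{>}$) are real for your argument but simply do not arise in the paper's.
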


We give the proofs of Theorem~\ref{thm:limit_shape_saddle}, Theorem~\ref{thm:cumulants} and Proposition~\ref{prop:forall} in Section~\ref{sec:calculation}.
Furthermore, we introduce in Section~\ref{sec:randomization} the so called grand canonical ensemble $(\Omega_t, \mathbb{P}_t)$ with $\Omega_t = \stackrel{.}{\cup}_{n\geq 1}\mathfrak S_n$ 
and $\mathbb{P}_t$ a measure such that $\mathbb{P}_t[\,\cdot\,|\mathfrak S_n]= \mathbb{P}_n[\,\cdot\,]$ (see \eqref{eq:def_Pt}). 
It is widely expected that the behaviour on grand canonical ensembles agrees with the behaviour on the canonical ensembles, but we will see here that this is only the case for macroscopic properties.
More precisely, we will see in Theorem~\ref{eq:CLT_Pt} that $w_n(x)$ has a limit shape for the grand canonical ensemble $\Omega_t$ and this agrees with the one for the canonical ensemble in Theorem~\ref{thm:limit_shape_saddle}.
However, we will see also in Theorem~\ref{eq:CLT_Pt} that the fluctuations at the points of the limit shape follow a different central limit theorem than in Theorem~\ref{thm:limit_shape_saddle}.
Notice that we will not deduce Theorem~\ref{thm:limit_shape_saddle} (nor any other of our results) from the grand canonical ensemble $(\Omega_t, \mathbb{P}_t)$.

\section{Preliminaries}

We introduce in this section the notation of the cycle counts and the notation of generating functions.

\subsection{Cycle counts}
\label{sec:cycle_counts}

The notation $\la = (\la_1,\la_2,\dots,\la_\ell)$ is very useful for the illustration 
of $\la$ via its Young diagram, but in the computations it is better to work with
the \emph{cycle counts} $C_k$. These are defined as
\begin{align}
  C_k(\sigma)=C_k  := \#\set{j\geq1; \la_j = k}
\end{align}
for $k\geq1$ and $\la= (\la_1,\la_2,\dots,\la_\ell)$ the cycle type of $\sigma\in \mathfrak{S}_n$. Conventionally $C_0:=0$.
We obviously have for $k\geq 1$
\begin{equation}\label{eq:sumC}
C_k\ge0 \ \text{ and } \ \sum_{k=1}^n k C_k=n.
\end{equation}
It is also clear that the cycle type of permutation (or a partition) is uniquely determined by the vector $(C_1,C_2,\dots)$.
The function $w_n(x)$ and the measure $\Pbn{\,\cdot\,}$ in \eqref{eq:def_w_n_part} and \eqref{eq:def_weighted_pb_la} can now be written as
\begin{align}
\label{eq:def_weighted_pb_ck}
w_n(x)
=
\sum_{k\geq x}C_k
\ \text{ and } \
\Pbn{\sigma}
&=
\frac{1}{h_n n!}\prod_{k= 1}^n \vth_k^{C_k}.
\end{align}
%
% \begin{align}
% \label{eq:def_weighted_pb_ck}
% \Pbn{\sigma}
% &=
% \frac{1}{h_n n!}\prod_{j= 1}^\ell \vth_{\la_j},\\
% %
% \label{eq:def_w_n_cycle} 
% w_n(x)
% &=
% \sum_{k>x}C_k.
% \end{align}
%
Our aim is to study the behaviour of $w_n(x)$ as $n\to\infty$. It is thus natural 
to consider the asymptotic behaviour of $C_k$ with respect to the measure $\Pbn{\,\cdot\,}$. 
\begin{lemma}[\cite{ErUe11}, Corollary~2.3]\label{lemma:BetzUel}
Under the condition $h_{n-1}/h_n \to 1$ the random variables $C_k$ converge for each $k\in\N$ in distribution to a
Poisson distributed random variable $Y_k$ with $\mathbb E\left[Y_k\right]=\frac{\vth_k}{k}$. More generally for all $b \in \N$ the following limit in distribution holds:
$$
\lim_{n\to+\infty}\left(C_1,\,C_2\,\ldots,\,C_b\right)= \left(Y_1,\,Y_2\,\ldots,\,Y_b\right)
$$
with $Y_k$ independent Poisson random variables with mean $\mathbb E\left[Y_k\right]=\frac{\vth_k}{k}$.
\end{lemma}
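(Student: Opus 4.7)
The plan is to prove the lemma by the method of factorial moments, exploiting the product structure of the partition function $h_n$. First I would observe that summing the atomic weights $\prod_k (\vth_k/k)^{c_k}/c_k!$ from \eqref{eq:def_weighted_pb_ck} over all nonnegative integer sequences with $\sum_k k c_k = n$, and then over $n$ with weight $t^n$, yields the classical exponential identity
\begin{equation*}
\sum_{n \geq 0} h_n t^n = \exp\left(\sum_{k \geq 1} \frac{\vth_k}{k} t^k\right).
\end{equation*}
Running the same combinatorial decomposition with biased weights $y_k \vth_k$ in place of $\vth_k$ (for $1 \leq k \leq b$) gives the joint probability generating function
\begin{equation*}
\sum_{n \geq 0} h_n t^n \, \En{\prod_{k=1}^b y_k^{C_k}} = \exp\left(\sum_{k \geq 1} \frac{\vth_k}{k} t^k\right) \cdot \exp\left(\sum_{k=1}^b (y_k - 1) \frac{\vth_k}{k} t^k\right).
\end{equation*}

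Next I would set $y_k = 1 + z_k$ and extract the coefficient of $\prod_k z_k^{r_k}/r_k!$ on both sides to isolate the joint falling-factorial moments $(C_k)_{r_k} := C_k(C_k-1)\cdots(C_k-r_k+1)$. A short manipulation, using the fact that the first exponential on the right is precisely $\sum_n h_n t^n$, produces the clean identity
\begin{equation*}
\En{\prod_{k=1}^b (C_k)_{r_k}} = \frac{h_{n-m}}{h_n} \prod_{k=1}^b \left(\frac{\vth_k}{k}\right)^{r_k}, \qquad m := \sum_{k=1}^b k r_k,
\end{equation*}
valid whenever $m \leq n$.

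The hypothesis $h_{n-1}/h_n \to 1$ then propagates, via the telescoping $h_{n-m}/h_n = \prod_{j=0}^{m-1} h_{n-j-1}/h_{n-j}$, to $h_{n-m}/h_n \to 1$ for every fixed $m \in \N$. Hence each joint factorial moment converges, as $n \to \infty$, to $\prod_{k=1}^b (\vth_k/k)^{r_k}$, which is exactly the corresponding joint factorial moment of independent Poisson random variables $Y_1, \ldots, Y_b$ with $\E{Y_k} = \vth_k/k$. The proof would conclude by invoking the standard fact that a multivariate Poisson law is uniquely determined by its (factorial) moments, so convergence of all joint factorial moments implies joint convergence in distribution. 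I expect the only real obstacle to be the combinatorial bookkeeping in deriving the factorial moment identity; once that formula is in place, the hypothesis on $h_n$ does essentially all the remaining work.
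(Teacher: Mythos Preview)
The paper does not actually prove this lemma: it is quoted verbatim as \cite[Corollary~2.3]{ErUe11} and then used as a black box, so there is no in-paper argument to compare against. Your factorial-moment approach is correct and is in fact the standard route to this result (it is essentially the argument in \cite{ABT02} for the Ewens case and its weighted generalisations). The key identity
\[
\En{\prod_{k=1}^b (C_k)_{r_k}} = \frac{h_{n-m}}{h_n}\,\prod_{k=1}^b \left(\frac{\vth_k}{k}\right)^{r_k},\qquad m=\sum_{k=1}^b k r_k,
\]
follows exactly as you outline from the generating-function identity of Lemma~\ref{lemma:Polya}, and the hypothesis $h_{n-1}/h_n\to 1$ telescopes to $h_{n-m}/h_n\to 1$ for each fixed $m$. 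The only point worth making explicit in the write-up is why convergence of all joint factorial moments suffices: the target vector $(Y_1,\dots,Y_b)$ has a joint moment generating function that is finite everywhere (product of Poisson MGFs), so the multivariate method of moments applies.
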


One might expect at this point that 
$w_n(x)$ is close to $\sum_{k=x}^n Y_k$.
Unfortunately  we will see in Section~\ref{sec:estmgf} that the asymptotic behaviour of $w_n(x)$ is more complicated.

\subsection{Generating functions}
\label{sec:gfs}

The (ordinary) generating function of a sequence $(g_k)_{k\geq 0}$ of complex numbers
is defined as the formal power series
\begin{align}\label{eq:G}
g(z): = \sum_{j=0}^\infty g_k z^k.
\end{align}
As usual, we define the
\emph{extraction symbol} $[z^k]\, g(z):= g_k\label{def:ext}$, that is, as the
coefficient of $z^k$ in the power series expansion \eqref{eq:G}
of~$g(z)$.

A generating function that plays an important role in this paper is 
\begin{align}
\label{eq:def_g_theta}
 g_\Theta(t)
:=
\sum_{m\geq 1}\frac{\vth_m}{m}t^m.
\end{align}
As mentioned in the introduction, we will use $\vth_m=\frac{m^\alpha(\log m)^j}{\Gamma(\alpha+1)}+\O{m^\beta}$, $j\in \N$.
We stress that generating functions of the type $(1-t)^{-\alpha}$ fall also in this category, 
and for them we will recover the limiting shape as previously done in \cite{ErGr08}. We will see in particular this case in Section~\ref{sec:estmgf}.\\
The reason why generating functions are useful is that it is often possible to write down
a generating function without knowing $g_n$ explicitly. In this case one can 
try to use tools from analysis to extract information about $g_n$, for large $ n$, from the generating function. 
It should be noted that there are several variants in the definition of generating functions.
However, we will use only the ordinary generating function and thus 
call it `just' generating function without risk of confusion. 

The following well-known identity is a special case of the general \emph{P\'olya's
Enumeration Theorem} \cite[p.\,17]{Po37} and is the main tool in this paper to
obtain generating functions. 
\begin{lemma}\label{lemma:Polya}
Let $(a_m)_{m\in \N}$ be a sequence of complex numbers. We then
have as formal power series in $t$
\begin{eqnarray*}
 &&
\sum_{n\in \N}\frac{t^n}{n!}\sum_{\sigma\in \mathfrak S_n}\prod_{j=1}^n a_j^{C_j}
=
\sum_{n\in \N} t^n \slan  \prod_{k=1}^\infty a_k^{C_k}
=
\mathrm{exp}\left(\sum_{m\geq 1} \frac{a_m}{m}t^m\right)
\end{eqnarray*}
where $z_\lambda:=\prod_{k=1}^n k^{C_k} C_k!$. If one series converges absolutely, so do the others.
\end{lemma}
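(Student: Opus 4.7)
The plan is to establish the two equalities separately, both by a direct combinatorial-algebraic computation; this is a classical identity and the only real work is bookkeeping.

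For the first equality, I would count permutations by cycle type. The standard fact is that the number of $\sigma \in \mathfrak{S}_n$ with a given cycle type $\lambda \vdash n$ equals $n!/z_\lambda$, where $z_\lambda = \prod_k k^{C_k}C_k!$ accounts for the $\prod_k k^{C_k}$ cyclic rotations within cycles and the $\prod_k C_k!$ orderings among cycles of equal length. Since the integrand $\prod_j a_j^{C_j}$ depends on $\sigma$ only through its cycle type, grouping permutations by $\lambda$ immediately converts $\frac{1}{n!}\sum_{\sigma\in\mathfrak{S}_n}$ into $\sum_{\lambda\vdash n} 1/z_\lambda$, giving the first equality as an identity of formal power series.

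For the second equality, I would expand the exponential multiplicatively. Writing
\[
\exp\Bigl(\sum_{m\geq 1}\tfrac{a_m}{m}t^m\Bigr)
= \prod_{m\geq 1}\exp\Bigl(\tfrac{a_m}{m}t^m\Bigr)
= \prod_{m\geq 1}\sum_{c_m\geq 0}\frac{1}{c_m!}\Bigl(\tfrac{a_m}{m}\Bigr)^{c_m} t^{m c_m},
\]
and expanding the product as a sum over sequences $(c_m)_{m\geq 1}$ of nonnegative integers with finite support, one obtains
\[
\sum_{(c_m)}\prod_{m\geq 1}\frac{a_m^{c_m}}{m^{c_m} c_m!}\; t^{\sum_m m c_m}.
\]
Grouping terms according to $n = \sum_m m c_m$ identifies each sequence $(c_m)$ with a partition $\lambda\vdash n$ via $c_m = C_m$, at which point $\prod_m m^{c_m}c_m!$ is exactly $z_\lambda$. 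This yields the middle expression of the lemma.

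The only subtlety is the convergence claim. The identities above are interchanges of sums that are trivially valid at the level of formal power series (each coefficient of $t^n$ involves only finitely many terms). Once one series converges absolutely at some $t$, absolute convergence of the others follows by Fubini/Tonelli applied to the double sums over $n$ and over partitions (respectively cycle-count sequences), since all manipulations simply regroup a single absolutely convergent double sum of non-negative-weighted monomials in the $a_m$. There is no genuine obstacle here; the main thing to be careful about is matching the combinatorial factor $n!/z_\lambda$ correctly and verifying that the sequences $(c_m)$ with $\sum_m mc_m=n$ are in bijection with partitions of $n$ recorded in cycle-count form.
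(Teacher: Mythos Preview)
Your argument is correct and is exactly the standard combinatorial derivation of this identity. The paper itself omits the proof entirely (deferring to Macdonald's book), so there is nothing to compare; your write-up supplies precisely the bookkeeping the paper elides.
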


We omit the proof of this lemma, but details can
be found for instance in \cite[p.~5]{Mac95}.

% 
% 
% 
% \begin{corollary}\label{corollary:Polya}
%  $G(t):=\mathrm{exp}\left(g_\Theta(t)\right)=\sum_{n\in \N} h_n t^n$.
% \end{corollary}
% The proof of these two facts can be found for instance in \cite{MaNiZe11,NiZe11}. Corollary \ref{corollary:Polya} is a key identity in the generating function approach, as it enables us to relate the 
% weights on cycle counts to the weights of $\Pb{\cdot}$.
% \begin{remark} Lemma \ref{lemma:Polya} entails also that $\Pb{\cdot}$ 
% induces a measure on integer partitions, i. e.
% $$
% \mathbb P_n^\la(\lambda)=\frac{1}{h_n}\prod_{j=1}^n \frac{1}{C_j!}\left(\frac{\vth_j}{j}\right)^{C_j}
% $$ 
% To our knowledge it is not possible to relate this measure either to the uniform distribution on partitions of $n$ or the Plancherel measure.
% \end{remark}
% %

\subsection{Approximation of sums}

We require for our argumentation the asymptotic behaviour of the generating function 
$g_\Theta(t)$ as $t$ tends to the radius of convergence, which is $1$ in our case. 

\begin{lemma}\label{lemma:polylog_asymp}
Let $(v_n)_{n\in \N}$ a sequence of positive numbers with $v_n\downarrow 0$ as $n\to+\infty$. We have for all $\delta\in\R\setminus\set{-1,\,-2,\,-3,\dots}$
\begin{align}
\sum_{k=1}^\infty k^\delta \exp{-k v_n}
=
\Gamma(\delta+1) v_n^{-\delta-1} + \zeta(-\delta) +O(v_n).
\end{align}
$\zeta(\cdot)$ indicates the Riemann Zeta function. 
 Furthermore, we have for $j\in\N$
\begin{align}
\sum_{k=1}^\infty (\log k)^j k^\delta \exp{-k v_n}
=
 v_n^{-\delta-1} \left(\frac{\partial}{\partial \delta}-\log v_n\right)^j\Gamma(\delta+1)+ O(1).
\end{align}
 We indicate $\left(\frac{\partial}{\partial \delta}-\log v_n\right)^j f(\delta)=\frac{\partial^j}{\partial \delta^j}f(\delta)-
 j\log v_n\frac{\partial^{j-1}}{\partial \delta^{j-1}}f(\delta)+\left({j \atop 2}\right)(\log v_n)^2\frac{\partial^{j-2}}{\partial \delta^{j-2}}f(\delta)+\ldots
 +\left(-\log v_n\right)^j f(\delta).$

\end{lemma}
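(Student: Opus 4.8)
The plan is to prove both asymptotic expansions by the same mechanism: recognize the sums as evaluations of (derivatives of) the polylogarithm-type function $\sum_{k\ge 1} k^\delta e^{-k v_n}$ near $v_n = 0$, and extract the singular expansion by a Mellin transform argument. First I would treat the base case. Write $F(v) := \sum_{k=1}^\infty k^\delta e^{-kv}$ for $v > 0$. Its Mellin transform in $v$ is $\tilde F(s) = \Gamma(s)\,\zeta(s-\delta)$, valid in the strip $\Re(s) > \max(1+\delta, 1)$; this follows from $\int_0^\infty v^{s-1} e^{-kv}\,dv = \Gamma(s) k^{-s}$ and interchanging sum and integral. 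Then by Mellin inversion $F(v) = \frac{1}{2\pi i}\int_{c-i\infty}^{c+i\infty} \Gamma(s)\zeta(s-\delta) v^{-s}\,ds$, and I would shift the contour to the left, picking up residues: a simple pole at $s = \delta+1$ coming from the pole of $\zeta(s-\delta)$, with residue $\Gamma(\delta+1) v^{-\delta-1}$; a simple pole at $s=0$ from $\Gamma(s)$, with residue $\zeta(-\delta)$; and a pole at $s=-1$ from $\Gamma(s)$, with residue $-\zeta(-1-\delta)\,v$, which is absorbed into the $O(v)$ term (one must check there is no collision, i.e. $\delta+1 \notin \{0,-1,-2,\dots\}$, which is exactly the hypothesis $\delta \notin \{-1,-2,-3,\dots\}$, and that $\delta+1 \neq 0$ is covered as well). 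Pushing the contour to $\Re(s) = -1-\eta$ for small $\eta>0$ and bounding the remaining integral using the rapid decay of $\Gamma(s)$ on vertical lines against the polynomial growth of $\zeta$ gives the remainder $O(v^{1+\eta})$, hence certainly $O(v)$; taking $v = v_n$ finishes the first formula.

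For the second formula I would differentiate the identity $\sum_{k\ge 1} k^\delta e^{-kv} = \Gamma(\delta+1)v^{-\delta-1} + \zeta(-\delta) + O(v)$ with respect to $\delta$, $j$ times. Since $\frac{\partial}{\partial\delta} k^\delta = (\log k)\, k^\delta$, the left-hand side becomes $\sum_{k\ge 1}(\log k)^j k^\delta e^{-kv}$. On the right-hand side, $\frac{\partial}{\partial\delta}\bigl(\Gamma(\delta+1)v^{-\delta-1}\bigr) = v^{-\delta-1}\bigl(\frac{\partial}{\partial\delta} - \log v\bigr)\Gamma(\delta+1)$ by the product rule, and iterating $j$ times produces precisely the operator $\bigl(\frac{\partial}{\partial\delta} - \log v\bigr)^j$ acting on $\Gamma(\delta+1)$, whose binomial expansion is the displayed formula. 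The term $\zeta(-\delta)$ contributes $\pm\zeta^{(i)}(-\delta)$ terms which are $O(1)$, and the $O(v)$ error term, upon differentiation, produces at worst $O(v (\log v)^j) = O(1)$ as $v \downarrow 0$; hence the total error is $O(1)$. Taking $v = v_n$ gives the claim.

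The main obstacle is justifying the differentiation-in-$\delta$ step rigorously, i.e. that the $O(v)$ (more precisely $O(v^{1+\eta})$) error term is differentiable in $\delta$ with the derivative still $o(1)$, uniformly enough. The clean way to handle this is to not differentiate the error term a posteriori but rather to carry $\delta$ as a parameter through the Mellin argument from the start: the contour-shift remainder is $\frac{1}{2\pi i}\int_{\Re s = -1-\eta}\Gamma(s)\zeta(s-\delta)v^{-s}\,ds$, which is an absolutely convergent integral depending analytically on $\delta$ in a neighborhood, so one may differentiate under the integral sign $j$ times; each differentiation brings down a factor $(-\log v - \psi\text{-type terms})$ of at most logarithmic size, and the bound $O(v^{1+\eta}(\log(1/v))^j) = O(1)$ survives. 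One should also double-check the edge cases where $\delta$ is a nonnegative integer, so that $\zeta(-\delta)$ or its $\delta$-derivatives hit the trivial zeros or the value $\zeta(0) = -1/2$ — these are all finite and harmless, and $\delta \in \{-1,-2,\dots\}$ is excluded precisely to avoid the residue collision at $s = \delta+1$ with a pole of $\Gamma$. Beyond this analytic bookkeeping the proof is routine.
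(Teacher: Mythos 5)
Your proof is correct and uses the Mellin-transform route, which is one of the two methods the paper explicitly names while omitting the proof (the paper states the lemma can be shown either by Euler--Maclaurin summation, which it calls ``straightforward,'' or via Mellin transform, referring to Flajolet--Sedgewick Ch.~VI.8 for the latter). The residue bookkeeping and the differentiate-under-the-Mellin-integral justification for the logarithmic case are sound; the only minor slip is that the initial strip of validity of $\Gamma(s)\zeta(s-\delta)$ is $\Re(s)>\max(1+\delta,0)$ rather than $\max(1+\delta,1)$, which does not affect the contour-shift argument.
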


This lemma can be proven with Euler Maclaurin summation formula or with the Mellin transformation.
The computations with Euler Maclaurin summation are straightforward and the details of the proof 
with the Mellin transformation can be found for instance in \cite[Chapter~VI.8]{FlSe09}.
We thus omit it.

We require also the behaviour of partial sum $\sum_{k=x}^\infty \frac{\theta_m}{m} t^m$ as $x\to\infty$ and as $t\to1 $. We have
\begin{lemma}[Approximation of sums]\label{lemma:approxsums}
Let $j\in\N$ and $v_n,z_n$ be given with $z_n\to+\infty$ and $z_n v_n = x(1+\widetilde{q}_n)$ for $x>0$ and $\widetilde{q}_n\to 0$.
We then have for all $\delta\in\R$ and all $\ell\in\N$ 
\begin{align*}
\sum_{k=\lfloor z_n \rfloor}^\infty (\log k)^j k^\delta \exp{-k v_n}
=&
\left(\frac{z_n}{x}\right)^{\delta+1}  \left(\sum_{k=0}^{\ell}  \left(\frac{\partial}{\partial \delta} +\log \frac{z_n}{x}\right)^j \frac{\Gamma(\delta+k+1,x)}{k!} \left(-\widetilde{q}_n\right)^{k}  \right)\\
&+ \O{\left(\log \left(\frac{z_n}{x}\right)\right)^j \widetilde{q}_n^{\ell+1} + \left(\frac{z_n}{x}\right)^{\delta} \left(\log\left(\frac{z_n}{x}\right) \right)^j}
% -B_1(z_n-\lfloor z_n\rfloor) z_n^\delta(\log z_n)^j  \exp{-x}\\
% &+ 
% \int_{z_n }^{+\infty} B_1(y-\lfloor y\rfloor) \frac{\partial}{\partial y} \left((\log y)^j y^{\delta} e^{-v_n y}\right)  \, \mathrm d y.
\end{align*}

with $\Gamma(a,x):=\int_x^{+\infty} s^{a-1} \exp{-s}\mathrm d s$ the incomplete Gamma function.
\end{lemma}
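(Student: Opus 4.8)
The plan is to reduce the partial sum $\sum_{k=\lfloor z_n\rfloor}^\infty (\log k)^j k^\delta e^{-kv_n}$ to an integral via Euler--Maclaurin (or equivalently a Riemann-sum comparison), then substitute $s = kv_n$ to obtain an incomplete Gamma-type integral, and finally expand around $x$ to account for the fact that $z_n v_n = x(1+\widetilde q_n)$ is not exactly $x$. First I would write $\sum_{k\ge \lfloor z_n\rfloor} f(k)$ with $f(t) = (\log t)^j t^\delta e^{-tv_n}$ as $\int_{z_n}^\infty f(t)\,dt$ plus boundary and derivative correction terms; the leading correction is of size $f(z_n) \asymp (\log(z_n/x))^j z_n^\delta e^{-x}$ up to constants, which (after pulling out a factor, as the authors do) matches the second error term $(z_n/x)^\delta(\log(z_n/x))^j$ once one remembers $e^{-x}$ is an absolute constant for fixed $x$. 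The higher Euler--Maclaurin terms involve $v_n f'$, $v_n^2 f''$, etc., and are of strictly smaller order, so they are absorbed.

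Next I would handle the main integral. Substituting $s = tv_n$ gives
\begin{align*}
\int_{z_n}^\infty (\log t)^j t^\delta e^{-tv_n}\,dt
= v_n^{-\delta-1}\int_{z_n v_n}^\infty \bigl(\log(s/v_n)\bigr)^j s^\delta e^{-s}\,ds.
\end{align*}
Now $\log(s/v_n) = \log s - \log v_n$, and expanding $(\log s - \log v_n)^j$ by the binomial theorem produces exactly the operator $\left(\frac{\partial}{\partial\delta} - \log v_n\right)^j$ acting on $\int_{z_n v_n}^\infty s^\delta e^{-s}\,ds = \Gamma(\delta+1, z_n v_n)$, since $\frac{\partial}{\partial\delta} s^\delta = (\log s) s^\delta$ and differentiation under the integral sign is justified by uniform convergence. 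So the integral equals $v_n^{-\delta-1}\left(\frac{\partial}{\partial\delta} - \log v_n\right)^j \Gamma(\delta+1, z_n v_n)$. Writing $v_n^{-\delta-1} = (z_n/x)^{\delta+1} (z_n v_n)^{-\delta-1} = (z_n/x)^{\delta+1}(x(1+\widetilde q_n))^{-\delta-1}$ and noting $\log v_n = \log(z_n v_n) - \log z_n = \log(x(1+\widetilde q_n)) - \log z_n$, one rewrites everything in terms of $\log(z_n/x)$ and $\Gamma(\delta+1, x(1+\widetilde q_n))$; up to a harmless relabeling $\frac{\partial}{\partial\delta} - \log v_n \rightsquigarrow \frac{\partial}{\partial\delta} + \log(z_n/x)$ modulo lower-order constant shifts, this is the shape of the claimed main term.

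The remaining step, and the one I expect to be the main obstacle, is the expansion in $\widetilde q_n$: I need to Taylor-expand $\Gamma(\delta+k+1,\, x(1+\widetilde q_n))$ around $x$ to order $\ell$. Using $\frac{\partial}{\partial x}\Gamma(a,x) = -x^{a-1}e^{-x}$ and the identity $\Gamma(a+1,x) = a\Gamma(a,x) + x^a e^{-x}$ (which lets one re-express $x$-derivatives of $\Gamma(a,\cdot)$ back in terms of incomplete Gamma functions of shifted parameters), the $k$-th Taylor coefficient is $\frac{(-x\widetilde q_n)^k}{k!}\cdot(\text{something like } x^{\delta}e^{-x})$ — and careful bookkeeping of these shifts is what turns the naive Taylor series into the stated sum $\sum_{k=0}^\ell \frac{(-\widetilde q_n)^k}{k!}\Gamma(\delta+k+1,x)$ with the operator applied termwise. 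The Lagrange remainder after $\ell+1$ terms contributes $\O{(\log(z_n/x))^j \widetilde q_n^{\ell+1}}$, the first error term. Throughout I must keep the $(\log(z_n/x))^j$ factor riding along correctly through both the binomial expansion of the logarithm and the Taylor expansion in $\widetilde q_n$, and check that the operator $\left(\frac{\partial}{\partial\delta}+\log(z_n/x)\right)^j$ commutes with the $\widetilde q_n$-expansion — the cross terms between "differentiate in $\delta$" and "shift in $x$" are exactly where an error is easiest to make, so I would verify the $j=1$, $\ell=1$ case explicitly as a sanity check before asserting the general formula.
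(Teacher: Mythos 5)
Your strategy matches the paper's at the top level — Euler--Maclaurin to reduce the tail sum to an integral, then a substitution that exposes an incomplete Gamma factor, then an expansion in $\widetilde q_n$ — but your choice of substitution ($s = tv_n$) is genuinely different from the paper's, and it is responsible for the extra bookkeeping you (correctly) flag as the main obstacle. The paper substitutes $s = (z_n/x)\,y$ instead; since $v_n\cdot (z_n/x)\,y = (1+\widetilde q_n)\,y$, the lower limit of integration becomes \emph{exactly} $x$ and the whole $\widetilde q_n$-dependence survives only as the factor $e^{-\widetilde q_n y}$ inside the integrand. Expanding that exponential as a power series and swapping sum and integral (Fubini) produces the stated $\sum_{k=0}^\ell \frac{(-\widetilde q_n)^k}{k!}\Gamma(\delta+k+1,x)$ immediately, with the operator $\bigl(\tfrac{\partial}{\partial\delta}+\log\tfrac{z_n}{x}\bigr)^j$ coming cleanly from $(\log y + \log\tfrac{z_n}{x})^j$ — no Gamma recurrences needed at all. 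Your substitution pushes the perturbation into the integration limit and also into $v_n^{-\delta-1} = (z_n/x)^{\delta+1}(1+\widetilde q_n)^{-\delta-1}$ and into $-\log v_n = \log(z_n/x) - \log(1+\widetilde q_n)$, so three separate expansions in $\widetilde q_n$ must recombine via $\Gamma(a+1,x) = a\Gamma(a,x)+x^a e^{-x}$. I checked your $j=0$, $\ell=1$ case: $(1-(\delta+1)\widetilde q_n)\bigl(\Gamma(\delta+1,x) - \widetilde q_n x^{\delta+1}e^{-x}\bigr) = \Gamma(\delta+1,x) - \widetilde q_n\Gamma(\delta+2,x) + O(\widetilde q_n^2)$, which matches, so your route does close — it is simply more laborious. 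The heuristic to remember is: when the boundary is $x(1+\widetilde q_n)$, choose the scaling that fixes the boundary at $x$ and lets the perturbation appear as a multiplicative $e^{-\widetilde q_n y}$ in the integrand; that is what the coefficients $\Gamma(\delta+k+1,x)/k!$ are secretly telling you.
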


\begin{remark}\label{rem:order_E}
 One can obtain more error terms by using the Euler Maclaurin summation formula with more derivatives. We have given in Appendix~\ref{sec:eulermac} a formulation of the Euler Maclaurin summation formula 
 with non-integer boundaries, which is more suitable for this computation than the usual one. Our primary interest is in the leading coefficient, hence we state the result only up to order $z_n^\delta$.
 However, the lower order terms can not be completely ignored. In particular they play an important role for the expectation of $\En{w_\infty^{\mathbf s}(x)}$ in Theorem~\ref{thm:limit_shape_saddle} 
 since there are, beside the leading term $\overline n \, w_\infty^{\mathbf s}(x)$, also other terms in the asymptotic expansion which are not $\o{(\overline n)^{1/2}}$. 
\end{remark}

% \begin{color}{blue}
%  
% \begin{remark}\label{rem:order_E}
%           The asymptotic expansion up to arbitrary order is important for the computation of the expectation in Thm.~\ref{thm:func_CLT_saddle}. 
%           One can obtain more error terms by using the Euler Maclaurin summation formula in 
%           the version proved in Appendix~\ref{sec:eulermac}. This formulation with non-integer boundaries is 
% more suitable for us than the usual one. 
%          \end{remark}
% \end{color}

% 
% 
% 
% 
% \begin{lemma}[Approximation of sums]\label{lemma:approxsums}
% Let $v_n,z_n$ be given with $z_n\to+\infty$ and $z_n v_n = a_0 + a_1 n^{-\beta}$ for $\beta>0, a_0 > 0$ and $a_0, a_1\in\R$, $a_0\neq 0$.
% We then have for all $\delta>0$ 
% \begin{align*}
% \sum_{k=\lfloor z_n \rfloor}^\infty k^\delta \exp{-k v_n}
% =&
% \left(\frac{z_n}{a_0}\right)^{\delta+1}  \left(\sum_{k=0}^{\lfloor \delta\rfloor} \frac{\Gamma(\delta+k+1,a_0)}{k!} \left(-\frac{a_1}{a_0}n^{-\beta}\right)^{k}  + O(n^{-(\lfloor \delta\rfloor+1)\beta})\right)\\
% &-B_1(z_n-\lfloor z_n\rfloor) f(z_n)
% +
% \sum_{k=1}^{\lfloor \delta\rfloor} (-1)^{k+1}\frac{B_{k+1}(z_n-\lfloor z_n\rfloor) f^{(k)}(z_n)}{k!}\\
% &+
% \frac{(-1)^{\lfloor \delta\rfloor+1}}{(\lfloor \delta\rfloor+1)!}\int_{k=\lfloor z_n \rfloor}^{+\infty} B_{\lfloor \delta\rfloor+1}(y-\lfloor y\rfloor)f^{(\lfloor \delta\rfloor+1)}(y)\, dy.
% \end{align*}
% with
% $f^{(d)}(y) := \left(\frac{\partial}{\partial y}\right)^d   y^\delta \exp{-y v_n}$.
% $\Gamma(a,x):=\int_x^{+\infty} s^{a-1} \exp{-s}\mathrm d s$ is the incomplete Gamma function. 
% \end{lemma}
% 
% 
% 
% 
\begin{proof}
$B_1(x):=x-\frac{1}{2}$ stands in the proof for the first Bernoulli polynomial. The proof of this lemma is based on the Euler Maclaurin summation formula, see \cite{Ap99} or \cite[Theorem~3.1]{Ap84}. 
We use here the following version: let $f:\R^+\to\R$ have a continuous derivative and suppose that $f$ and $f'$ are integrable. 
Then

\begin{align}
\label{eq:euler_mac_1_deri}
 \sum_{k\geq \floor{c}} f(k)
 =&
\int_{c}^{+\infty} f(s)\, \mathrm d s - B_1(c-\floor{c}) f(c)
+ 
\int_{c}^{+\infty}B_1(s-\lfloor s\rfloor) f'(s)\, \mathrm d s.
\end{align}
We substitute $f(s):=(\log s)^j s^\delta \exp{-s v_n}$, $c:=z_n$ and notice that $f$ and all its derivatives tend to zero exponentially fast as $s\to +\infty$. 
We begin with the first integral.
% 
% 
% The proof of this lemma is straightforward and can be proven for instance with the Euler Maclaurin summation with non-integer boundaries, see Theorem~\ref{thm:Apostol}.
% We substitute $f(t):=t^\delta \exp{-t v_n}$, $c:=z_n$ and notice that $f$ and all its derivatives tend to zero exponentially fast as $t\to +\infty$. 
% We choose $p=0$ and obtain 
% \begin{align}
%  \sum_{k\geq \floor{z_n}} f(k)
%  =&
% \int_{z_n}^{+\infty} f(x)\, \mathrm d x - B_1(z_n-\floor{z_n}) f(z_n)
% + 
% \int_{z_n}^{+\infty} B_{1}(x-\lfloor x\rfloor) f'(x)\, \mathrm d x\nonumber
% \end{align}
% 
% 
% 
Now by the change of variables $s:=\frac{z_n}{x}y$
\begin{eqnarray}
&&\int_{z_n}^{+\infty}(\log s)^j s^\delta \exp{-v_n s} \mathrm d s
=
\left(\frac{z_n}{x} \right)^{\delta+1}\int_{x}^{+\infty} \left(\log y + \log \frac{z_n}{x} \right)^j  y^\delta \exp{-y}\exp{-\widetilde{q}_n y}\mathrm d y=\nonumber\\
&&=
\left(\frac{z_n}{x} \right)^{\delta+1} \left(\frac{\partial}{\partial \delta} + \log \frac{z_n}{x} \right)^j\int_{x}^{+\infty}  y^\delta \exp{-y}\exp{-\widetilde{q}_n y}\mathrm d y=\nonumber\\
&&=
\left(\frac{z_n}{x} \right)^{\delta+1} \left(\left(\frac{\partial}{\partial \delta}  + \log \frac{z_n}{x} \right)^j 
\sum_{k=0}^{\ell} \frac{\Gamma(\delta+k+1,x)}{k!} \left(-\widetilde{q}_n\right)^{k} + \O{\widetilde{q}_n^{m+1}}\right)\label{eq:firstpart}
\end{eqnarray}
where we have swapped integral and series expansion of the exponential by Fubini's theorem. 
This gives the behaviour of the leading term in \eqref{eq:euler_mac_1_deri} with $f(s):=(\log s)^j s^\delta \exp{-s v_n}$.
The remaining terms can be estimated with a similar computations and using that $B_1(s-\lfloor s\rfloor)$ is bounded.

\end{proof}

\section{Randomization}\label{sec:randomization}

We introduce in this section a probability measure $\Pbt{\,\cdot\,}$ on $\stackrel{.}{\cup}_{n\geq 1}\mathfrak S_n$, where $\stackrel{.}{\cup}$ denotes the disjoint union, dependent on
a parameter $t>0$ with $\Pbt{\,\cdot\,| \mathfrak S_n} = \Pbn{\,\cdot\,}$ and consider the asymptotic behaviour of $w_n(x)$ 
with respect to $\Pbt{\,\cdot\,}$ as $t\to 1$.

\subsection{Grand canonical ensemble}

Computations on $\mathfrak S_n$ can turn out to be difficult and many formulas can not be used to study the behaviour as $n\to\infty$.
A possible solution to this problem is to adopt a suitable randomization.
This has been successfully introduced by \cite{Fri93} and used also by \cite{Bo11a} as a tool to investigate combinatorial structures, and later applied in many contexts. 
The main idea of randomization is to define a one-parameter family of probability measures on $\stackrel{.}{\cup}_{n\geq 1}\mathfrak S_n$ for which cycle counts turn out to be independent. 
Then one is able to study their behaviour more easily, and ultimately the parameter is tuned in such a way that randomized functionals are distributed as in the non-randomized context. 
Let us see how to apply this in our work. We define
\begin{align}
 G_\Theta(t) = \mathrm{exp} \bigl( g_\Theta(t)\bigr)
\end{align}
with $g_\Theta(t)$ as in \eqref{eq:def_g_theta}.
% 
% 
% We choose a parameter $t\in (0,1)$ and define a probability measure $\mathbb P_t$ on the set $\mathfrak S:=\stackrel{.}{\cup}_{n\geq 1}\mathfrak S_n$. 
If $G_\Theta(t)$ is finite for some $t>0$, then for each $\sigma\in \mathfrak S_n$ let us define the probability measure
\begin{align}
\label{eq:def_Pt}
\mathbb P_t\left[\sigma\right]:=\frac{1}{G_\Theta(t)}\frac{t^{n}}{n!} \prod_{k=1}^{n} \vartheta_{k}^{C_k}.
\end{align}
Lemma~\ref{lemma:Polya} shows that $\mathbb P_t$ is indeed a probability measure on $\stackrel{.}{\cup}_{n\geq 1}\mathfrak S_n$.
The induced distribution on cycle counts $C_k$ can easily be determined. 
\begin{lemma} \label{lemma:indep_cycle}
Under $\Pbt{\,\cdot\,}$ the $C_k$'s are independent and Poisson distributed with $$\Et{C_k} = \frac{\vartheta_k}{k}t^k.$$
\end{lemma}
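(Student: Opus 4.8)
The plan is to verify that the pushforward of $\Pbt{\,\cdot\,}$ onto the sequence of cycle counts $(C_1, C_2, \dots)$ factorizes as an (infinite) product of independent Poisson laws, and to read off the claimed parameters. First I would observe that a permutation $\sigma \in \mathfrak{S}_n$ is determined up to conjugacy by its cycle-count vector $(C_1, \dots, C_n)$ satisfying $\sum_{k} k C_k = n$, and that the number of permutations in $\mathfrak{S}_n$ with a prescribed cycle-count vector equals $n!/z_\la$ with $z_\la = \prod_{k} k^{C_k} C_k!$ in the notation of Lemma~\ref{lemma:Polya}. Hence, for any fixed finitely-supported vector $(c_k)_{k\geq 1}$ of nonnegative integers with $n := \sum_k k c_k$,
\begin{align*}
\Pbt{C_k = c_k \ \forall k}
= \frac{1}{G_\Theta(t)} \cdot \frac{t^n}{n!} \cdot \frac{n!}{\prod_k k^{c_k} c_k!} \prod_k \vth_k^{c_k}
= \frac{1}{G_\Theta(t)} \prod_{k\geq 1} \frac{1}{c_k!}\left(\frac{\vth_k t^k}{k}\right)^{c_k}.
\end{align*}

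\textbf{Normalization and factorization.} Next I would use $G_\Theta(t) = \mathrm{exp}(g_\Theta(t)) = \mathrm{exp}\bigl(\sum_{m\geq 1} \frac{\vth_m}{m} t^m\bigr) = \prod_{k\geq 1} \mathrm{exp}\!\left(\frac{\vth_k t^k}{k}\right)$, which is exactly the product of the normalizing constants of the Poisson laws with means $\mu_k := \vth_k t^k / k$. Substituting this into the displayed identity gives
\begin{align*}
\Pbt{C_k = c_k \ \forall k}
= \prod_{k\geq 1} \mathrm{exp}\!\left(-\mu_k\right)\frac{\mu_k^{c_k}}{c_k!},
\end{align*}
which is precisely the statement that the $C_k$ are independent with $C_k \sim \mathrm{Poisson}(\mu_k)$, hence $\Et{C_k} = \mu_k = \frac{\vth_k}{k} t^k$. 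It is worth noting explicitly that the disjoint-union structure of the sample space $\stackrel{.}{\cup}_{n\geq 1}\mathfrak S_n$ is what makes the constraint $\sum_k k C_k = n$ disappear: summing over all $n$ lets $(c_k)$ range over \emph{all} finitely-supported nonnegative integer vectors, with no linear relation imposed, which is what independence requires.

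\textbf{Main obstacle.} The only genuinely delicate point is convergence/measure-theoretic bookkeeping: one must check that $\Pbt{\,\cdot\,}$ is a genuine probability measure (this is where Lemma~\ref{lemma:Polya} is invoked, giving $\sum_{n} \sum_{\sigma \in \mathfrak{S}_n} \Pbt{\sigma} = G_\Theta(t)^{-1} \cdot G_\Theta(t) = 1$, valid provided $G_\Theta(t) < \infty$, i.e. the series $g_\Theta(t)$ converges — which holds for $t$ inside the radius of convergence), and that the infinite product of Poisson laws is a well-defined probability measure on the space of finitely-supported sequences. Since $\sum_k \mu_k = g_\Theta(t) < \infty$ under the standing hypothesis, a Borel--Cantelli argument shows that almost surely only finitely many $C_k$ are nonzero, so the product measure indeed lives on the correct space; cylinder events are handled by the finite-dimensional computation above together with dominated convergence. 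Everything else is the bookkeeping of the $z_\la$ factor, which is routine.
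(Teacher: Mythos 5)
Your proof is correct, and it takes a genuinely different route from the paper's. The paper computes the Laplace transform $\Et{\exp{-sC_k}}$ by invoking P\'olya's enumeration theorem (Lemma~\ref{lemma:Polya}), recognizes the resulting expression $\mathrm{exp}\bigl((e^{-s}-1)\frac{\vth_k}{k}t^k\bigr)$ as the transform of a Poisson law, and then asserts that independence follows ``analogously'' (strictly speaking the paper only states \emph{pairwise} independence, though the same computation with a joint transform gives full independence). You instead compute the joint probability mass function $\Pbt{C_k = c_k\;\forall k}$ directly: you use the classical count $n!/z_\la$ of permutations with a prescribed cycle-count vector, observe that $t^n = \prod_k (t^k)^{c_k}$, and factor $G_\Theta(t) = \prod_k \exp\bigl(\vth_k t^k/k\bigr)$ to obtain the product of Poisson masses. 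Both arguments ultimately rest on the same combinatorial identity (indeed $n!/z_\la$ is what makes P\'olya's theorem true), but your version delivers full joint independence in one step, is self-contained, and makes visible exactly where the disjoint union $\stackrel{.}{\cup}_n \mathfrak{S}_n$ is used to dissolve the constraint $\sum_k kC_k = n$. The paper's transform-based route is more uniform with the generating-function methodology used throughout the rest of the paper, which is presumably why the authors chose it. Your closing remarks on well-posedness (finiteness of $G_\Theta(t)$, finitely-supported sequences almost surely) are correct and slightly more careful than the paper is at this point; they are not strictly necessary for the lemma but do no harm.
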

\begin{proof}
From P\'olya's enumeration theorem (Lemma~\ref{lemma:Polya}) we obtain
\begin{eqnarray*}
\Et{\exp{-s C_k}}
&=&\sum_{n\geq 0}\sum_{\sigma\in \mathfrak S_n}\exp{-s C_k}\Pbt{\sigma}
=
\frac{1}{G_\Theta(t)}\sum_{n\geq 0}\sum_{\sigma\in \mathfrak S_n}\frac{t^n}{n!}(\vth_k\exp{-s })^{C_k}\prod_{j\leq n\atop j\neq k}(\vth_j)^{C_j}\\
&=&
\frac{1}{G_\Theta(t)}\mathrm{exp}\left( \sum_{j=0}^{+\infty}\frac{\vth_j}{j}t^j\right)\mathrm{exp}\left(\left(\exp{-s}-1\right)\frac{\vth_k}{k}t^k \right)\\
&=&
\mathrm{exp}\left(\left(\exp{-s}-1\right)\frac{\vth_k}{k}t^k \right).
\end{eqnarray*}
Analogously one proves the pairwise independence of cycle counts.
\end{proof}
Obviously the following conditioning relation holds:
$$
\mathbb P_t\left[\,\cdot\,|\,\mathfrak S_n\right]=\Pbn{\,\cdot\,}.
$$
A proof of this fact is easy and can be found for instance in \cite[Equation (1)]{Ha90}. We note that $w_n(x)$ is $\mathbb P_t$-a.s.~finite, since $\Et{w_n(x)}<+\infty$. Now since the conditioning relation holds for all $t$ with $G_\Theta(t)<+\infty$, one can try to look for $t$ satisfying ``$\Pbn{\,\cdot\,}\approx \Pbt{\,\cdot\,} $'', 
which heuristically means that we choose a parameter for which permutations on $\mathfrak S_n$ weigh as most of the mass of the measure $\mathbb P_t$. 
We have on $\mathfrak{S}_n$
$$ 
n = \sum_{j=1}^\ell \la_j = \sum_{k=1}^n k C_k.
$$
A natural choice for $t$ is thus the solution of 
\begin{align}
\label{eq:def_t_random}
 n 
= 
\mathbb E_t\left[\sum_{k=1}^\infty k C_k \right] 
=
\sum_{k=1}^\infty \vartheta_k t^k .
\end{align}
which is guaranteed to exist if the series on the 
right-hand side is divergent at the radius of convergence (we will see this holds true for our particular choice of weights).
We write $t= e^{-v_n}$ and use Lemma~\ref{lemma:polylog_asymp} in our case 
$\vartheta_k= {\left(\log k\right)^j}\frac{k^\alpha+\O{k^\beta}}{\Gamma(\alpha+1)}$ to obtain
\begin{eqnarray}
n 
&\stackrel{!}{=} &
{\frac{v_n^{-\alpha-1}}{\Gamma(\alpha+1)}
\left(\frac{\partial}{\partial \alpha}-\log v_n\right)^j\Gamma(\alpha+1)+\O{1} } \nonumber\\
&= & 
 \frac{v_n^{-\alpha-1}}{\Gamma(\alpha+1)}\left(\frac{\partial}{\partial \alpha}-\log v_n\right)^j\Gamma(\alpha+1)+\O{1}\nonumber \\ 
 & \Longrightarrow & 
v_n =\left(\frac{\left(\log n\right)^j}{n}\right)^{1/(\alpha+1)}+o\left(\frac{n}{\left(\log n\right)^j}\right)\label{eq:def_t_random_2}
\end{eqnarray}
We will fix this choice for the rest of the section.

\subsection{Limit shape and mod-convergence} 
In order to derive our main results from the measure $\mathbb P_t$ we will use a tool developed by \cite{KowNik10}, the \emph{mod-Poisson convergence}.
\begin{definition} 
A sequence of random variables $(Z_n)_{n\in \N}$ 
converges in the mod-Poisson sense with parameters 
{$(\mu_n)_{n\in \N}$} if the following limit
$$\lim_{n\to+\infty}{\mathrm{exp}(\mu_n (1-\exp{iu}))}\E{\exp{iu Z_n}}=\Phi(u)$$
exists for every $u \in \R$, and the convergence is locally uniform. The limiting function $\Phi$ is then continuous and $\Phi(0) = 1$.
\end{definition}
This type of convergence gives stronger results than a central limit theorem, indeed it implies a CLT 
(and other properties we will see below). For the rest of the Section let us fix $n^*$ and $\overline n$ as in \eqref{eq:def_n*}.
We obtain
\begin{proposition}\label{prop:lim_sha_rand}
Let $x\geq 0$ be arbitrary and $x^*:= x n^*$.
Furthermore, let $t=e^{-v_n}$ with $v_n$ as in \eqref{eq:def_t_random_2}.
Then the random variables $(w_n(x^*))_{n\in \N}$ converge in 
the mod-Poisson sense with parameters 
$$\mu_n = \overline n\, w_\infty^{\mathbf r}(x)(1+\o{1}),$$ where 
\begin{align}\label{eq:mathbf_r}
 w_\infty^{\mathbf r}(x):= \frac{\Gamma(\alpha,\,x)}{\Gamma(\alpha+1)}.
\end{align}
$\Gamma(\alpha,\,x)$ is the upper incomplete Gamma function.
\end{proposition}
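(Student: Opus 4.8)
The plan is to exploit that under $\Pbt{\,\cdot\,}$ the cycle counts are genuinely independent, so that $w_n(x^*)$ has an explicit law and the ``mod'' part of the statement is immediate. First I would record, via Lemma~\ref{lemma:indep_cycle}, that under $\Pbt{\,\cdot\,}$ the $C_k$ are independent Poisson variables with $\Et{C_k}=\frac{\vth_k}{k}t^k$, and that $\sum_{k\geq 1}\frac{\vth_k}{k}t^k=g_\Theta(t)<+\infty$ for $t=\exp{-v_n}<1$; hence $w_n(x^*)=\sum_{k\geq x^*}C_k$ is $\Pbt{\,\cdot\,}$-a.s.\ finite and, being a sum of independent Poisson variables, is itself Poisson distributed with parameter
\[
\lambda_n:=\sum_{k\geq x^*}\frac{\vth_k}{k}t^k .
\]
Consequently $\Et{\exp{iu\,w_n(x^*)}}=\exp{\lambda_n(\exp{iu}-1)}$, so that for any real sequence $(\mu_n)$
\[
\exp{\mu_n(1-\exp{iu})}\,\Et{\exp{iu\,w_n(x^*)}}=\exp{(\lambda_n-\mu_n)(\exp{iu}-1)} .
\]
Taking $\mu_n:=\lambda_n$ makes the right-hand side identically $1$ for every $u\in\R$ and every $n$; this is (degenerate) mod-Poisson convergence with limiting function $\Phi\equiv 1$, which is continuous, satisfies $\Phi(0)=1$, and is trivially approached locally uniformly. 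The proposition is therefore equivalent to the asymptotic statement $\lambda_n=\overline n\,w_\infty^{\mathbf r}(x)(1+\o{1})$.

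For the latter I would insert the weights \eqref{eq:weights}, writing
\[
\lambda_n=\frac{1}{\Gamma(\alpha+1)}\sum_{k\geq x^*}(\log k)^j k^{\alpha-1}\exp{-k v_n}+\O{\sum_{k\geq x^*}k^{\beta-1}\exp{-k v_n}},
\]
and apply Lemma~\ref{lemma:approxsums} to the main sum with $\delta:=\alpha-1$ and $z_n:=x^*=xn^*$. From \eqref{eq:def_n*} and the choice of $v_n$ in \eqref{eq:def_t_random_2} one checks that $z_n v_n=x(1+\widetilde q_n)$ with $\widetilde q_n\to 0$, so the hypotheses are met and $\log(z_n/x)=\log n^*$. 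Its leading term,
\[
\frac{1}{\Gamma(\alpha+1)}(n^*)^{\alpha}\bigl(\tfrac{\partial}{\partial\delta}+\log n^*\bigr)^{j}\Gamma(\delta+1,x)\Big|_{\delta=\alpha-1},
\]
simplifies, using $\log n^*\sim\tfrac{1}{\alpha+1}\log n$ together with the order relation $v_n^{-\alpha-1}\asymp n/(\log n)^j$ coming from \eqref{eq:def_t_random} and $\overline n=n/n^*$, to $\overline n\,\Gamma(\alpha,x)/\Gamma(\alpha+1)=\overline n\,w_\infty^{\mathbf r}(x)$. The remaining $k\geq 1$ terms in the expansion of Lemma~\ref{lemma:approxsums}, the $\O{k^\beta}$-remainder (of order $v_n^{-\beta}$, hence $\o{\overline n}$ because $\beta<\alpha/2<\alpha$), and the effect of the integer part in the lower summation limit (a single term of order $(n^*)^{\alpha-1}(\log n^*)^j=\o{\overline n}$) are all $\o{\overline n}$. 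This yields $\lambda_n=\overline n\,w_\infty^{\mathbf r}(x)(1+\o{1})$ and closes the argument.

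The only real work is this second step: the evaluation of the partial sum $\lambda_n$ via Lemma~\ref{lemma:approxsums} and the verification that every subleading contribution is genuinely $\o{\overline n}$ — precisely the bookkeeping flagged in Remark~\ref{rem:order_E}, with the simplification that here, unlike for the expectation $\En{w_\infty^{\mathbf s}(x)}$ in Theorem~\ref{thm:limit_shape_saddle}, only the leading coefficient survives inside $(1+\o{1})$. One should also double-check, from \eqref{eq:def_t_random}--\eqref{eq:def_t_random_2} and Lemma~\ref{lemma:polylog_asymp}, that $v_n$ may be taken to satisfy $n=\sum_k\vth_k t^k$ to the precision needed for $\widetilde q_n\to 0$. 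By contrast the probabilistic input costs nothing: exact independence of the $C_k$ under $\Pbt{\,\cdot\,}$ makes $w_n(x^*)$ exactly Poisson, so no saddle-point analysis is required — this is exactly the simplification one buys by passing to the grand canonical ensemble, and it is why mod-Poisson convergence (which is stronger than a CLT) drops out here essentially for free once the parameter asymptotics are in hand.
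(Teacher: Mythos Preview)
Your proposal is correct and follows essentially the same route as the paper: recognise that under $\Pbt{\,\cdot\,}$ the variable $w_n(x^*)$ is exactly Poisson (so mod-Poisson convergence with $\Phi\equiv 1$ is automatic), then compute the parameter via Lemma~\ref{lemma:approxsums}. One small point: Lemma~\ref{lemma:approxsums} is stated for $x>0$ (it needs $z_n\to\infty$), so the case $x=0$ should be handled separately with Lemma~\ref{lemma:polylog_asymp}, as the paper does.
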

\begin{proof} We have
\begin{align}
\label{eq:laplace_randomised_w_n}
 \Et{\exp{is w_n(x^*)}}=\Et{\exp{is \sum_{\ell=\floor{x^*}}^{\infty}C_\ell}}
=
\mathrm{exp}\left({\left(e^{is}-1\right)\sum_{\ell= \floor{x^*}}^{\infty}\frac{\vartheta_\ell}{\ell}{t^\ell}}\right).
\end{align}
This is the characteristic function of Poisson distribution. We thus obviously have
mod-Poisson convergence with limiting function $\Phi(t)\equiv 1$. It remains to compute the parameter $\mu_n$.
Applying Lemma~\ref{lemma:polylog_asymp} for $x=0$ and Lemma~\ref{lemma:approxsums} for $x>0$ together with \eqref{eq:def_t_random_2} gives
\begin{eqnarray}
&&\sum_{\ell=\floor{x^*}}^{+\infty}\frac{{(\left(\log \ell\right)^j)}\ell^{\alpha-1}+\O{\ell^{\beta-1}}}{\Gamma(\alpha+1)}t^\ell
\nonumber \\
&&=\frac1{\Gamma(\alpha+1)}
\left(n^*\right)^\alpha
\left(\frac{\partial}{\partial \alpha}-\log n^* \right)^j
\left({\Gamma(\alpha,x)}+\o{1}\right) \nonumber\\
%+\O{\left(\log n^*\right)^j v_n^{-\beta}}\nonumber\\
&&=\frac1{\Gamma(\alpha+1)}
\left(n^*\right)^\alpha
\left(\log n^* \right)^j
\left({\Gamma(\alpha,x)}+\o{(n^*)^{\alpha}\left(\log n^* \right)^j}\right) \nonumber\\
&&=\frac{\Gamma(\alpha,x)}{\Gamma(\alpha+1)} \overline n +\o{\bar n}
%\nonumber\\
%&&+\O{\left(\log n\right)^{-\frac{j\beta}{\alpha+1}} n^{\frac{\beta}{\alpha+1}}}.
\end{eqnarray}
We deduce that $\mu_n:=\overline n\, w_\infty^{\mathbf r}(x) +\o{\overline n}$.
This completes the proof.
\end{proof}

This yields a number of interesting consequences. In first place we can prove a CLT and detect the limit shape accordingly.
\begin{corollary}[CLT and limit shape for randomization]\label{corol:CLT_random}
With the notation as above, we have as $n\to\infty$ with respect to $\mathbb{P}_t$
\begin{eqnarray}
\label{eq:CLT_Pt}
&& \widetilde{w}_n^{\mathbf r}(x):=\frac{w_n(x^*)-
\overline n w_\infty^{\mathbf r}(x)}{\sqrt{\bar n } }
\stackrel{\mathcal L}{\to} 
\mathcal N(0,w_\infty^{\mathbf r}(x)).
\end{eqnarray}
Furthermore the limit shape of $w_n(x)$ is given by 
$w_\infty^{\mathbf r}(x)$ 
{(with scaling $A_n = \overline n $ 
and $B_n = n^*$, see \eqref{eq:def_limit_shape})}.
In particular, we can choose $\delta =0$ in \eqref{eq:def_limit_shape}.
% 
% the limiting shape of the Young diagram associated to the measure $\mathbb P_t$ is $w_\infty^{\mathbf r}(x)$ 
% (with $\delta =0$ in \eqref{eq:def_limit_shape}). Furthermore
% \begin{eqnarray}
% \label{eq:CLT_Pt}
% && \widetilde{w}_n(x):=\frac{w_n(x^*)-(n^*)^{\alpha}w_\infty^{\mathbf r}(x)}{(n^*)^{\frac{\alpha}{2}}}
% \stackrel{\mathcal L}{\to} 
% \mathcal N(0,w_\infty^{\mathbf r}(x)).
% \end{eqnarray}
\end{corollary}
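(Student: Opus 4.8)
The plan is to read off both assertions from Proposition~\ref{prop:lim_sha_rand}, using that under $\mathbb{P}_t$ the variable $w_n(x^*)=\sum_{\ell\ge\floor{x^*}}C_\ell$ is, by Lemma~\ref{lemma:indep_cycle} together with the computation \eqref{eq:laplace_randomised_w_n} carried out in that proof, \emph{exactly} Poisson distributed with parameter $\lambda_n:=\sum_{\ell\ge\floor{x^*}}\frac{\vth_\ell}{\ell}t^\ell$, and that $\lambda_n=\overline n\,w_\infty^{\mathbf r}(x)(1+\o{1})$, so in particular $\lambda_n\to\infty$. For the central limit theorem, a Poisson variable with parameter $\lambda_n\to\infty$ satisfies the classical CLT $(w_n(x^*)-\lambda_n)/\sqrt{\lambda_n}\stackrel{\mathcal L}{\to}\mathcal N(0,1)$ (one may see this directly from the characteristic function), and since $\lambda_n=\overline n\,w_\infty^{\mathbf r}(x)(1+\o{1})$ this is equivalent to $(w_n(x^*)-\lambda_n)/\sqrt{\overline n}\stackrel{\mathcal L}{\to}\mathcal N(0,w_\infty^{\mathbf r}(x))$. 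It then remains to replace the centering $\lambda_n$ by $\overline n\,w_\infty^{\mathbf r}(x)$; by Slutsky's lemma this is legitimate once $(\lambda_n-\overline n\,w_\infty^{\mathbf r}(x))/\sqrt{\overline n}\to 0$, for which the bound $\lambda_n=\overline n\,w_\infty^{\mathbf r}(x)+\o{\overline n}$ of Proposition~\ref{prop:lim_sha_rand} is not quite sufficient and one has to push the asymptotic expansion of $\lambda_n$ one further step via Lemma~\ref{lemma:approxsums} (cf.\ Remark~\ref{rem:order_E}); alternatively, exactly as in part~(2) of Theorem~\ref{thm:limit_shape_saddle}, one states the CLT with an $\o{1}$ correction to $w_\infty^{\mathbf r}(x)$ in the centering. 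Equivalently, the CLT follows from mod-Poisson convergence, which is known to imply it.

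For the limit shape, set $W_n(x):=\overline n^{-1}w_n(n^*x)$. This is a non-increasing, right-continuous step function, whereas the candidate limit $w_\infty^{\mathbf r}(x)=\Gamma(\alpha,x)/\Gamma(\alpha+1)$ is continuous, strictly decreasing, bounded, with $w_\infty^{\mathbf r}(0)=\Gamma(\alpha)/\Gamma(\alpha+1)<\infty$ and $w_\infty^{\mathbf r}(x)\downarrow 0$ as $x\to\infty$; one also checks $\int_0^\infty w_\infty^{\mathbf r}=1$, consistent with $A_nB_n=\overline n\, n^*=n$ and the choice \eqref{eq:def_t_random_2} of $v_n$. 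For a fixed $x\ge0$, $w_n(x^*)$ is Poisson with mean $\lambda_n\sim\overline n\,w_\infty^{\mathbf r}(x)$, hence by Chebyshev $w_n(x^*)-\lambda_n=\O{\sqrt{\overline n}}$ in $\mathbb{P}_t$-probability and therefore $W_n(x)\to w_\infty^{\mathbf r}(x)$ in $\mathbb{P}_t$-probability, a step that needs only Proposition~\ref{prop:lim_sha_rand}. Now fix $\eps>0$, pick $M$ with $w_\infty^{\mathbf r}(M)<\eps/2$ and a grid $0=x_0<x_1<\dots<x_N=M$ fine enough that $w_\infty^{\mathbf r}(x_{i-1})-w_\infty^{\mathbf r}(x_i)<\eps/2$ for each $i$ (possible since $w_\infty^{\mathbf r}$ is uniformly continuous on $[0,\infty)$). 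On the event $\{\,|W_n(x_i)-w_\infty^{\mathbf r}(x_i)|<\eps/2\text{ for all }i\,\}$, whose probability tends to $1$, monotonicity of both $W_n$ and $w_\infty^{\mathbf r}$ sandwiches $|W_n(x)-w_\infty^{\mathbf r}(x)|<\eps$ for every $x\in[0,M]$, while for $x>M$ one has $0\le W_n(x)\le W_n(M)<\eps$ and $0<w_\infty^{\mathbf r}(x)<\eps$. Hence $\mathbb{P}_t\bigl[\sup_{x\ge0}|W_n(x)-w_\infty^{\mathbf r}(x)|\le\eps\bigr]\to1$; because $w_\infty^{\mathbf r}$ is bounded and continuous up to and including $x=0$, no restriction $x\ge\delta$ is needed, so $\delta=0$ is admissible in \eqref{eq:def_limit_shape}, with $A_n=\overline n$, $B_n=n^*$.

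The only genuinely new ingredient is the uniform-in-$x$ upgrade of the pointwise convergence, and this is routine given that $W_n$ is monotone and the limit continuous (a Pólya-type argument); the sole place demanding a little extra care beyond Proposition~\ref{prop:lim_sha_rand} is the $\o{\sqrt{\overline n}}$ control of the centering error $\lambda_n-\overline n\,w_\infty^{\mathbf r}(x)$ in the CLT, which forces one to extract the next term in the asymptotics of $\lambda_n$ from Lemma~\ref{lemma:approxsums} rather than to rely on the leading-order statement alone.
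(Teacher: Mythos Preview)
Your argument is essentially the same as the paper's: for the CLT you invoke the Poisson law of $w_n(x^*)$ and the standard Poisson CLT (the paper phrases this as ``follows from mod-Poisson convergence'' or ``replace $s$ by $s(\overline n)^{-1/2}$ in \eqref{eq:laplace_randomised_w_n}''), and for the limit shape you run the same grid-plus-monotonicity (P\'olya-type) argument the paper gives.

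One point where you are in fact more careful than the paper: you correctly observe that centering by $\overline n\,w_\infty^{\mathbf r}(x)$ rather than by the exact Poisson mean $\lambda_n$ requires $\lambda_n-\overline n\,w_\infty^{\mathbf r}(x)=\o{\sqrt{\overline n}}$, which is strictly stronger than the $\o{\overline n}$ recorded in Proposition~\ref{prop:lim_sha_rand}. The paper's proof passes over this silently; your remedy---either push the expansion of $\lambda_n$ one step further via Lemma~\ref{lemma:approxsums}, or insert an $\o{1}$ correction in the centering as is done in Theorem~\ref{thm:limit_shape_saddle}---is exactly right, and indeed mirrors what the paper does in the saddle-point setting (cf.\ Remark~\ref{rem:order_E}).
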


\begin{proof}
The CLT follows immediately from \cite[Prop. 2.4]{KowNik10}, but also can be deduced easily from \eqref{eq:laplace_randomised_w_n} by replacing
$s$ by {$s (\bar n)^{-1/2}$}. It is also straightforward to show that $w_\infty^{\mathbf r}(x)$ is the limit shape.
For a given $\eps>0$, we choose $0=x_0 <x_1<\dots<x_\ell$ such that $w_\infty^{\mathbf r}(x_{j+1})-w_\infty^{\mathbf r}(x_{j})<\eps/2$ for $1\leq j \leq \ell-1$ and $w_\infty^{\mathbf r}(x_{\ell})<\eps/2$.
It is now easy to see that for each $x\in\R^+$
\begin{align*}
|(\overline n)^{-1}w_n(x^*)-w_\infty^{\mathbf r}(x)|>\eps \Longrightarrow \exists j \text{ with }|(\overline n)^{-1}w_n(x_j^*)-w_\infty^{\mathbf r}(x_j)|>\eps/2.
\end{align*}
% \begin{align*}
% |(n^*)^{-\alpha}w_n(x^*)-w_\infty^{\mathbf r}(x)|>\eps \Longrightarrow & &&|(n^*)^{-\alpha}w_n(x_j^*)-w_\infty^{\mathbf r}(x_j)|>\eps/2\\ 
% &\text{ or }&&|(n^*)^{-\alpha}w_n(x_{j+1}^*)-w_\infty^{\mathbf r}(x_{j+1})|>\eps/2.
% \end{align*}
Thus
\begin{align}
\label{eq:proof_limit_shape_Pt}
  \Pbt{\sup_{x\geq 0} |(\overline n)^{-1}w_n (x^*)-  w_\infty^{\mathbf r}(x)| \geq \eps}  
&\leq 
\sum_{j=1}^\ell \Pbt{|(\overline n)^{-1} w_n (x_j^*)- w_\infty^{\mathbf r}(x_j)| \geq \eps/2 }  
\end{align}
It now follows from \eqref{eq:CLT_Pt} that each summand in \eqref{eq:proof_limit_shape_Pt} tends to $0$ as $n\to \infty$. 
This completes the proof.
\end{proof}

Another by-product of mod-Poisson convergence of a sequence $(Z_n)_{n\in \N}$ is that one can approximate $Z_n$ 
with a Poisson random variable with parameter $\mu_n$, see \cite[Prop. 2.5]{KowNik10}.
However in our situation this is trivial since $w_n(x^*)$ is already Poisson distributed. 

% 
% 
% Another by-product of the mod-Poisson convergence is the approximation of $(w_n(x))_{n\in \N}$ through Poisson random variables, which is a consequence of \cite[Prop. 2.5]{KowNik10}:
% \begin{corollary}[Poisson approximation] The sequence of Poisson-distributed random variables
% $$P_{\lambda_n}:=Po(\lambda_n)$$
% converges in the Kolmogorov-Smirnov distance to $(w_n(x))_{n\in \N}$.
% \end{corollary}
% 

As we are going to do in the next section, we are also interested in the (joint) behaviour of increments. 

\begin{proposition} 
\label{prop:marignals_randimised}
For all $x,\,y \in \R$, $y>x$, set
$$w_n(x,y):= w_n(x)-w_n(y)\ \text{ and } \ w_\infty^{\mathbf r}(x,y):=\frac{\Gamma(\alpha,x)-\Gamma(\alpha,y)}{\Gamma(\alpha+1)}.$$
Then
\begin{align}
\label{eq:w_n_x_y_weak_convergence}
\widetilde{w}_n^{\mathbf r}(x,y):=\frac{w_n(x^*,y^*)-{\bar n }w_\infty^{\mathbf r}(x,y)}{{\left(\bar n \right)^{1/2}}\sqrt{ w_\infty^{\mathbf r}(x,y)}}
\stackrel{\mathcal L}{\to} \mathcal N(0,1)
 \end{align}
as $n\to \infty$ with $x^* := x n^*$ and with $y^* := yn^*$.
% 
% \begin{eqnarray}
% && \frac{w_n(x^*,y^*)-(n^*)^{\alpha}w_\infty^{\mathbf r}(x,y)}{(n^*)^{\frac{\alpha}{2}}\sqrt{ w_\infty^{\mathbf r}(x,y)}},
%  \;\frac{w_n(y)-(n^*)^{\alpha}w_\infty^{\mathbf r}(y)}{(n^*)^{\frac{\alpha}{2}}\sqrt{ w_\infty^{\mathbf r}(y)}}
% \end{eqnarray}

Furthermore, $\widetilde{w}_n^{\mathbf r}(x)$ and $\widetilde{w}_n^{\mathbf r}(x,y)$ are asymptotically independent.
\end{proposition}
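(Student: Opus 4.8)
The plan is to exploit the fact that, under $\mathbb{P}_t$, the cycle counts $C_k$ are independent Poisson random variables (Lemma~\ref{lemma:indep_cycle}), so that $w_n(x^*,y^*)=\sum_{\ell=\floor{x^*}}^{\floor{y^*}-1}C_\ell$ is itself a Poisson random variable, independent of $w_n(y^*)=\sum_{\ell\geq\floor{y^*}}C_\ell$. First I would compute the characteristic function of $w_n(x^*,y^*)$ exactly as in \eqref{eq:laplace_randomised_w_n}, obtaining
\begin{align*}
\Et{\exp{is\,w_n(x^*,y^*)}}=\mathrm{exp}\left(\left(e^{is}-1\right)\sum_{\ell=\floor{x^*}}^{\floor{y^*}-1}\frac{\vth_\ell}{\ell}t^\ell\right),
\end{align*}
and then apply Lemma~\ref{lemma:approxsums} twice, once with lower cutoff $\floor{x^*}$ and once with $\floor{y^*}$, together with the choice of $v_n$ in \eqref{eq:def_t_random_2}, to get that the Poisson parameter is $\overline n\, w_\infty^{\mathbf r}(x,y)+\o{\overline n}$. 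This is a routine repetition of the computation already done in the proof of Proposition~\ref{prop:lim_sha_rand}. The CLT \eqref{eq:w_n_x_y_weak_convergence} then follows in the standard way: replace $s$ by $s(\overline n)^{-1/2}\big/\sqrt{w_\infty^{\mathbf r}(x,y)}$ and expand the exponent, using $e^{is(\overline n)^{-1/2}}-1=is(\overline n)^{-1/2}-\tfrac12 s^2(\overline n)^{-1}+\O{(\overline n)^{-3/2}}$, so that the centred and normalised log-characteristic-function converges to $-s^2/2$; alternatively one cites \cite[Prop.~2.4]{KowNik10} since we have established mod-Poisson convergence.

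For the asymptotic independence of $\widetilde{w}_n^{\mathbf r}(x)$ and $\widetilde{w}_n^{\mathbf r}(x,y)$, the key observation is that $\widetilde{w}_n^{\mathbf r}(x)$ is a deterministic affine function of the \emph{two} independent blocks $w_n(x^*,y^*)$ and $w_n(y^*)$, while $\widetilde{w}_n^{\mathbf r}(x,y)$ depends only on the first block. Concretely, I would compute the joint characteristic function
\begin{align*}
\Et{\exp{i\bigl(s\,\widetilde{w}_n^{\mathbf r}(x)+u\,\widetilde{w}_n^{\mathbf r}(x,y)\bigr)}}
\end{align*}
by writing both exponents in terms of $w_n(x^*,y^*)$ and $w_n(y^*)$, using independence to factor the expectation into a product over these two blocks, and then expanding each Poisson characteristic function as above. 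The cross term in $s$ and $u$ that would encode dependence comes only from the block $w_n(x^*,y^*)$, and its coefficient is proportional to the difference of the normalising constants; after the $(\overline n)^{-1/2}$ rescaling one checks that this cross term contributes a factor whose exponent tends to a constant times $su$ — but with the normalisations chosen in \eqref{eq:CLT_Pt} and \eqref{eq:w_n_x_y_weak_convergence} the covariance of the two limiting Gaussians is exactly $0$, because $\mathrm{Cov}(w_n(x^*),w_n(x^*,y^*))=\mathrm{Var}(w_n(x^*,y^*))$ and dividing by $\sqrt{\var(w_n(x^*))}\sqrt{\var(w_n(x^*,y^*))}$ followed by sending $n\to\infty$ does not give zero — so in fact one must be slightly careful here. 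The correct statement is that the pair $(w_n(x^*,y^*),\,w_n(y^*))$, suitably centred and each normalised by its own standard deviation, converges to a pair of \emph{independent} standard Gaussians (immediate from independence of the two Poisson blocks plus the one-dimensional CLT applied to each), and $\widetilde{w}_n^{\mathbf r}(x,y)$ is the first coordinate while $\widetilde{w}_n^{\mathbf r}(x)$ is an asymptotically-nontrivial linear combination of the two; asymptotic independence of $\widetilde{w}_n^{\mathbf r}(x,y)$ from $\widetilde{w}_n^{\mathbf r}(x)$ then requires that the coefficient of the \emph{first} coordinate in that linear combination vanish in the limit.

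Thus the substantive point — and the step I expect to be the main obstacle — is the variance bookkeeping: one must verify that $\var_t(w_n(x^*,y^*))=\o{\var_t(w_n(x^*))}$, equivalently $w_\infty^{\mathbf r}(x,y)$ is dominated in the relevant scaling, so that the weight of the shared block $w_n(x^*,y^*)$ inside $\widetilde{w}_n^{\mathbf r}(x)$ is asymptotically negligible. In fact this is \emph{not} true for fixed $x<y$ — both variances are of order $\overline n$ — so the asymptotic independence must be understood in the limit $y\to\infty$ (or, reading the statement in context of the next section, after the further rescaling that makes increments microscopic); I would state and use the precise regime in which $w_\infty^{\mathbf r}(x,y)/w_\infty^{\mathbf r}(x)\to 0$, under which the cross-covariance of the normalised variables vanishes and the joint characteristic function factorises in the limit, giving the claimed asymptotic independence. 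The remaining estimates — controlling the error terms from Lemma~\ref{lemma:approxsums} uniformly and justifying the expansion of the characteristic functions — are routine.
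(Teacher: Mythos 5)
Your CLT for $\widetilde{w}_n^{\mathbf r}(x,y)$ is correct and is essentially what the paper does (the paper merely remarks that this is ``almost the same'' as the proof of \eqref{eq:CLT_Pt} and omits it). The issue is in the independence part. You correctly noticed that, as written, the statement cannot be literally true: $\widetilde{w}_n^{\mathbf r}(x)$ is an affine function of $w_n(x^*)=w_n(x^*,y^*)+w_n(y^*)$, which overlaps the block $w_n(x^*,y^*)$ underlying $\widetilde{w}_n^{\mathbf r}(x,y)$, and for fixed $x<y$ both blocks carry variance of order $\overline n$, so the normalised variables remain correlated in the limit. Your diagnosis is right, but the repair you propose — reading the claim in a $y\to\infty$ regime or after some further microscopic rescaling — is not what the paper does and is not the intended reading.

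The actual resolution is much simpler: the ``$x$'' in $\widetilde{w}_n^{\mathbf r}(x)$ is a typo for ``$y$''. This is visible in the paper's own proof, which exhibits the two \emph{disjoint} blocks $w_n(x,y)=\sum_{k= x^*}^{y^*-1} C_k$ and $w_n(y) = \sum_{k\geq y^*} C_k$ and invokes Lemma~\ref{lemma:indep_cycle}; it is also consistent with the multi-point statement \eqref{eq:marginals_w_n_Pt}, where the ``tail'' coordinate paired with the increments is $w_n(x^*_\ell)$, i.e.\ the tail at the \emph{largest} cut point. With that correction, $\widetilde{w}_n^{\mathbf r}(y)$ and $\widetilde{w}_n^{\mathbf r}(x,y)$ are deterministic affine functions of sums of independent $C_k$'s over disjoint index sets and are therefore \emph{exactly} independent for every $n$, not merely asymptotically; no variance bookkeeping or auxiliary limiting regime is needed. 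Your proposal, by contrast, would leave the statement unproved for any fixed $x<y$.
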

\begin{remark}
 As we will see, the proof of independence relies on the independence of cycles coming from Lemma 
 \ref{lemma:indep_cycle}. Therefore it is easy to generalize the above result to more than 
 two points.
\end{remark}

\begin{proof}
The proof of \eqref{eq:w_n_x_y_weak_convergence} almost the same as the proof of \eqref{eq:CLT_Pt} and we thus omit it.
Since
$$
w_n(x,y) = \sum_{k= x^*}^{y^*-1} C_k \ \text{ and } \ w_n(y) = \sum_{k= y^*}^{\infty} C_k
$$
and all $C_k$ are independent, we have that $\widetilde{w}_n^{\mathbf r}(x)$ and $\widetilde{w}_n^{\mathbf r}(x,y)$ are independent for each $n\in\N$.
Thus $\widetilde{w}_n^{\mathbf r}(x)$ and $\widetilde{w}_n^{\mathbf r}(x,y)$ are also independent in the limit.
\end{proof}
%We can therefore carry out a comparison between the joint distribution of $(w_n(y),w_n(x)-w_n(y))$ in the randomized resp. non-randomized case for variances and covariances: see Table \ref{table:comparison}.
%
%\begin{sidewaystable}
%\centering
% \begin{tabular}[h!]{||l||c||c||}
%  \hline
%& \textbf{Randomized} & \textbf{Non-randomized ($\alpha=1$)} \\
%\hline
%\textbf{Variance} &$\left(\int_{y/\kappa}^{+\infty}\frac{u^{\alpha-1}}{\Gamma(\alpha)}\exp{-\kappa u }\mathrm d u\right)^{\frac{1}{2}} n^{\frac{\alpha}{2(1+\alpha)}},\;\frac{1}{\sqrt 2}\left(\int_{x/\kappa}^{y/\kappa}\frac{u^{\alpha-1}}{\Gamma(\alpha)}\exp{-\kappa u }\mathrm d u\right)^{\frac{1}{2}} n^{\frac{\alpha}{2(1+\alpha)}}$ & $\sqrt[4]{n}$\\
%\textbf{Mean} &$n^{\frac{\alpha}{2(\alpha+1)}}\left(\frac{\int_{y/\kappa}^{+\infty}e^{-\kappa u}u^{\alpha-1} \mathrm{d}u}{\Gamma(\alpha)}\right)^{\frac{1}{2}},\;-\frac{q}{\sqrt 2}\,n^{\frac{\alpha}{2(\alpha+1)}}\left( \int_{x/\kappa}^{y/\kappa}\frac{u^{\alpha-1}}{\Gamma(\alpha)}\exp{-\kappa u }\mathrm d u\right)^{\frac{1}{2}}$ &$\frac{n^{1/4}}{2}{{ e}^{-y}} ,\;\frac{n^{1/4}}{2}{{ e}^{-x}} $\\
%\textbf{Covariance} & 0  & $A_2\not\equiv 0$ \\
%\hline
%
% \end{tabular}
%\caption{Comparison of the joint distribution of $(w_n(y),w_n(x)-w_n(y))$}\label{table:comparison}
%\end{sidewaystable}
\subsection{Functional CLT}\label{sec:CLT_randomization}
The topic of this section is to prove a functional CLT for the profile $w_n(x)$ of the Young diagram. 
Similar results were obtained in a different framework by \cite{Ha90,DePi85} on the number of cycle counts not exceeding $n^{\lfloor x\rfloor}$, 
and by \cite{BelBoutEnri10} for Young diagrams confined in a rectangular box. We show
\begin{theorem}
\label{thm:func_CLT_pt}
 The process $\widetilde{w}_n^{\mathbf r}:\R^+\to\R$ \textup{(}see \eqref{eq:CLT_Pt}\textup{)} converges weakly with respect to $\mathbb{P}_t$ as $n\to\infty$ to a 
continuous process $\widetilde{w}_\infty^{\mathbf r}:\R^+\to\R$ with $\widetilde{w}_\infty^{\mathbf r}(x)\sim \mathcal N(0,\sigma_\infty^{\mathbf r}(x))$ and independent increments. 
\end{theorem}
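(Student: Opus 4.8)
The plan is to establish weak convergence in the space $D(\R^+)$ of c\`adl\`ag functions (or $C(\R^+)$ after checking that the limit is continuous) by the standard two-step procedure: first prove convergence of finite-dimensional distributions, then prove tightness. For the finite-dimensional distributions, fix $0 \leq x_1 < x_2 < \dots < x_m$ and consider the increments $w_n(x_i^*, x_{i+1}^*) = \sum_{k=\lfloor x_i^* \rfloor}^{\lfloor x_{i+1}^* \rfloor - 1} C_k$ together with the tail $w_n(x_m^*) = \sum_{k \geq \lfloor x_m^* \rfloor} C_k$. Because the $C_k$ are independent under $\mathbb{P}_t$ by Lemma~\ref{lemma:indep_cycle}, these blocks are jointly independent, so the joint characteristic function factorizes. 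Each block is a sum of independent Poissons, hence itself Poisson, and applying Lemma~\ref{lemma:approxsums} (and Lemma~\ref{lemma:polylog_asymp} for the block touching $0$) with the choice $v_n$ from \eqref{eq:def_t_random_2} gives that the $i$-th block, suitably centered and divided by $\sqrt{\bar n}$, converges to $\mathcal N(0, w_\infty^{\mathbf r}(x_i, x_{i+1}))$, independently across $i$; this is exactly the content of Proposition~\ref{prop:marignals_randimised} extended to several points, as noted in the remark there. Writing $\widetilde w_n^{\mathbf r}(x_i)$ as a linear combination of these independent block variables then identifies the limiting covariance as $\mathrm{Cov}(\widetilde w_\infty^{\mathbf r}(x_i), \widetilde w_\infty^{\mathbf r}(x_j)) = w_\infty^{\mathbf r}(x_{i \vee j})$ for $i \leq j$, which is precisely the covariance of a Gaussian process with independent increments and $\widetilde w_\infty^{\mathbf r}(x) \sim \mathcal N(0, \sigma_\infty^{\mathbf r}(x))$ where $\sigma_\infty^{\mathbf r}(x) = w_\infty^{\mathbf r}(x)$.

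For tightness I would use a standard moment criterion in $D(\R^+)$, e.g. the Billingsley/Kolmogorov-type bound requiring control of $\E{|\widetilde w_n^{\mathbf r}(x) - \widetilde w_n^{\mathbf r}(y)|^p |\widetilde w_n^{\mathbf r}(y) - \widetilde w_n^{\mathbf r}(z)|^p}$ for $x \leq y \leq z$ on compact intervals, for a suitable $p$ (taking $p = 2$ and using fourth moments suffices since Poisson variables have all moments). Again independence of the $C_k$ makes the increment $w_n(x^*, y^*)$ a single Poisson variable with mean $\mu_n(x,y) := \sum_{k=\lfloor x^*\rfloor}^{\lfloor y^*\rfloor - 1} \vth_k t^k / k$, and Lemma~\ref{lemma:approxsums} gives $\mu_n(x,y) = \bar n\, w_\infty^{\mathbf r}(x,y)(1 + \o{1})$ uniformly for $x, y$ in a compact set, with $w_\infty^{\mathbf r}(x,y)$ Lipschitz on compacts away from $0$ (and H\"older-controlled near $0$ since $\Gamma(\alpha, \cdot)$ is bounded and smooth). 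The centered Poisson variable $w_n(x^*,y^*) - \mu_n(x,y)$ has variance $\mu_n(x,y)$ and fourth central moment $\mu_n(x,y) + 3\mu_n(x,y)^2$, so after dividing by $\sqrt{\bar n}$ one gets $\E{|\widetilde w_n^{\mathbf r}(x,y)|^4} = \O{(w_\infty^{\mathbf r}(x,y))^2 + \bar n^{-1} w_\infty^{\mathbf r}(x,y)}$, and the two-increment product bound follows from independence of disjoint blocks. This yields tightness on each $[\delta, T]$ and, since one can take $\delta = 0$ as in Corollary~\ref{corol:CLT_random}, on all of $[0,T]$; letting $T \to \infty$ gives tightness in $D(\R^+)$. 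Continuity of the limit follows because the limiting Gaussian process has continuous covariance and Gaussian increments whose variance $w_\infty^{\mathbf r}(x,y) = (\Gamma(\alpha,x) - \Gamma(\alpha,y))/\Gamma(\alpha+1) \to 0$ as $y \to x$, so Kolmogorov's continuity criterion applies to the limit.

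The main obstacle I expect is the uniformity in the asymptotic expansion of the block means near the origin: Lemma~\ref{lemma:approxsums} is stated pointwise in $x > 0$ with error terms depending on $x$, and for tightness one needs $\mu_n(x,y) = \bar n w_\infty^{\mathbf r}(x,y) + (\text{error})$ with the error controlled uniformly for $0 \leq x \leq y \leq T$, including the regime $x^* = \O{1}$ where the floor functions and the boundary terms in the Euler--Maclaurin formula behave differently. Handling the small-$x$ block requires splitting off $\sum_{k < K} C_k$ for fixed large $K$ (whose contribution is $\O{1}$ almost surely, hence negligible after dividing by $\sqrt{\bar n}$) and applying the clean asymptotics only to $\sum_{k \geq K}$; this is the one place where genuine care is needed rather than routine bookkeeping. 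Everything else is a direct consequence of the independence and Poisson structure of the $C_k$ under $\mathbb{P}_t$ together with the summation lemmas already established.
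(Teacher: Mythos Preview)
Your proposal is correct and follows essentially the same approach as the paper: finite-dimensional convergence via Proposition~\ref{prop:marignals_randimised} (exploiting the independence and Poisson structure of the $C_k$ under $\mathbb{P}_t$), and tightness via the Billingsley two-increment moment criterion $\Et{(\widetilde w_n^{\mathbf r}(x)-\widetilde w_n^{\mathbf r}(x_1))^2 (\widetilde w_n^{\mathbf r}(x_2)-\widetilde w_n^{\mathbf r}(x))^2}=\O{(x_2-x_1)^2}$, which the paper obtains by factorizing into a product of Poisson variances and invoking Lemma~\ref{lemma:approxsums} together with the Lipschitz property of $\Gamma(\alpha,\cdot)$. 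Your discussion of the uniformity issue near $x=0$ is in fact more careful than what the paper spells out.
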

The technique we will exploit is quite standardized (see \cite{Ha90}). We remark that, unlike in this paper where the Ewens measure is considered, we do not obtain here a Brownian process, as the variance 
of $\widetilde{w}_\infty^{\mathbf r}(t)-\widetilde{w}_\infty^{\mathbf r}(s)$ for $r\geq s$ is more complicated than in the case of the Wiener measure.

We know from Proposition~\ref{prop:marignals_randimised} the finite dimensional marginals of the process. More specifically
we have for $x_\ell\geq x_{\ell-1}\geq \dots\geq x_1\geq 0$ that
\begin{align}
\label{eq:marginals_w_n_Pt}
(\overline n)^{-1/2})\bigl(w_n(x^*_\ell),\,w_n(x^*_{\ell-1})-w_n(x^*_\ell),\,\ldots,\,w_n(x^*_1)-w_n(x^*_2)\bigr)\sim \mathcal N\left(\mathbf 0, \Sigma' \right)
\end{align}
where $\Sigma'$ is a diagonal matrix with 
$$\Sigma'_{11} = w_\infty^{\mathbf r}(x_\ell) \ \text{ and } \ \Sigma'_{jj} = w_\infty^{\mathbf r}(x_{\ell-j+1},x_{\ell-j+2}) \text{ for }j\geq 2.
$$
Now all we need to show to complete the proof of Theorem~\ref{thm:func_CLT_pt} is the tightness of the process $\widetilde{w}_n^{\mathbf r}$. 
In order to do so, we will proceed similarly to \cite{Ha90}, namely we will show that 
\begin{lemma}%[Tightness for $\mathcal L\left(w_n\right)$]
\label{lemma:tightness} 
We have for $0\leq x_1<x\leq x_2<K$ with $K$ arbitrary 
\begin{equation}\label{eq:goal}
\Et{(\widetilde w_n^{\mathbf r}(x^*)-\widetilde w_n^{\mathbf r}(x_1^*))^2 (\widetilde w_n^{\mathbf r}(x_2^*)-\widetilde w_n^{\mathbf r}(x^*))^2}=\O{(x_2-x_1)^2}
\end{equation}
with  {$x^*:={x n^*}$, $x_1^*:={x_1 n^*}$ and $x_2^*:={x_2 n^*}$}.
\end{lemma}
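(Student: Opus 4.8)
\textbf{Proof plan for Lemma~\ref{lemma:tightness}.}
The plan is to exploit the decomposition of $w_n$ into independent cycle counts under $\mathbb P_t$ and reduce the desired fourth-moment bound to a product of two second-moment bounds. Write $U := \widetilde w_n^{\mathbf r}(x^*)-\widetilde w_n^{\mathbf r}(x_1^*)$ and $V := \widetilde w_n^{\mathbf r}(x_2^*)-\widetilde w_n^{\mathbf r}(x^*)$. Since by \eqref{eq:def_weighted_pb_ck} we have $w_n(a)-w_n(b) = \sum_{a\leq k < b} C_k$ for $a<b$, the random variable $U$ is (up to the deterministic normalisation $(\overline n)^{-1/2}$ and a deterministic recentering) the sum of the independent Poisson variables $C_k$ for $\lfloor x_1^*\rfloor \leq k < \lfloor x^*\rfloor$, while $V$ is the analogous sum over $\lfloor x^*\rfloor \leq k < \lfloor x_2^*\rfloor$. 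These two index ranges are disjoint, so by Lemma~\ref{lemma:indep_cycle} the variables $U$ and $V$ are independent under $\mathbb P_t$. Hence $\Et{U^2 V^2} = \Et{U^2}\,\Et{V^2}$, and it suffices to show $\Et{U^2} = \O{x-x_1}$ and $\Et{V^2} = \O{x_2-x}$, whereupon their product is $\O{(x-x_1)(x_2-x)} = \O{(x_2-x_1)^2}$.

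For the second-moment bounds, note that $U$ has mean $\Et{U} = (\overline n)^{-1/2}\bigl(\sum_{\lfloor x_1^*\rfloor\le k<\lfloor x^*\rfloor}\Et{C_k} - \overline n\,w_\infty^{\mathbf r}(x_1,x)\bigr)$, which by the very construction of $w_\infty^{\mathbf r}$ and the error analysis in Proposition~\ref{prop:marignals_randimised} (equivalently Proposition~\ref{prop:lim_sha_rand}) is $\o{1}$ uniformly for $x,x_1$ in the compact range $[0,K]$; and $\var(U) = (\overline n)^{-1}\sum_{\lfloor x_1^*\rfloor\le k<\lfloor x^*\rfloor}\frac{\vth_k}{k}t^k$ since the $C_k$ are independent Poisson with the means of Lemma~\ref{lemma:indep_cycle}. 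Applying Lemma~\ref{lemma:approxsums} (with $\delta = \alpha-1$, together with \eqref{eq:def_t_random_2}) to the partial sum over $\lfloor x_1^*\rfloor \le k < \lfloor x^*\rfloor$ gives $\var(U) = w_\infty^{\mathbf r}(x_1,x) + \o{1} = \frac{\Gamma(\alpha,x_1)-\Gamma(\alpha,x)}{\Gamma(\alpha+1)} + \o{1}$. Since $\Gamma(\alpha,\cdot)$ is Lipschitz on $[0,K]$, this is $\O{x-x_1}$ when $x-x_1$ is bounded below, and for $x-x_1$ small one uses the mean value theorem: $\Gamma(\alpha,x_1)-\Gamma(\alpha,x) = \O{(x-x_1)\sup_{[0,K]}|s^{\alpha-1}e^{-s}|} = \O{x-x_1}$. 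Then $\Et{U^2} = \var(U) + (\Et{U})^2 = \O{x-x_1} + \o{1}$; absorbing the $\o{1}$ harmlessly when $x-x_1$ is not small, and noting that the genuinely dangerous regime is $x-x_1 \to 0$ where one must be careful, leads to the next point. The same argument applies verbatim to $V$.

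The main obstacle is exactly the regime where $x-x_1$ (or $x_2-x$) is so small that $\lfloor x^*\rfloor - \lfloor x_1^*\rfloor$ is of order $1$ or even zero: there the additive $\o{1}$ error in $\Et{U}$ and the discretisation from the floor functions are no longer negligible compared with the target $\O{x-x_1}$. I would handle this by splitting into two cases. If $(x-x_1)\,n^* \geq 1$, then the sum $\sum_{\lfloor x_1^*\rfloor\le k<\lfloor x^*\rfloor}\frac{\vth_k}{k}t^k$ genuinely contains $\Theta((x-x_1)n^*)$ terms each of size $\Theta(\overline n/n^*)$ (using $\vth_k t^k \asymp k^\alpha e^{-kv_n}$ and $kv_n$ bounded on the relevant range), so $\var(U) = \Theta(x-x_1)$ and one checks the recentering error is genuinely $\o{x-x_1}$ by retaining one more term in Lemma~\ref{lemma:approxsums} (cf. Remark~\ref{rem:order_E} and Remark~\ref{rem:general_weights}). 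If instead $(x-x_1)n^* < 1$, then $w_n(x_1^*,x^*)$ is a sum of at most one Poisson variable, so $\Et{U^2}\leq (\overline n)^{-1}\bigl(\Et{C_{k_0}} + \Et{C_{k_0}}^2\bigr) + (\Et U)^2$ for the single relevant index $k_0$, and since $\Et{C_{k_0}} = \frac{\vth_{k_0}}{k_0}t^{k_0} = \O{\overline n/n^*}$ while $(\overline n)^{-1/2}\Et{C_{k_0}} = \O{(\overline n)^{1/2}/n^*} = \o{1}$, one gets $\Et{U^2} = \O{(n^*)^{-1}}$, which is $\O{x-x_1}$ precisely because $x-x_1 \geq $ a constant times $(n^*)^{-1}$ would be false — so in this sub-case one instead observes that the left-hand side of \eqref{eq:goal} vanishes or is negligible unless both index ranges are nonempty, in which case $x_2-x_1 \gtrsim (n^*)^{-1}$ and the bound $\O{(n^*)^{-2}} = \O{(x_2-x_1)^2}$ closes the estimate. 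Collecting the two cases (and using independence of $U$ and $V$ to multiply the bounds) yields \eqref{eq:goal}.
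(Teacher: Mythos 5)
Your main argument is the same as the paper's: under $\mathbb P_t$ the cycle counts are independent Poisson variables (Lemma~\ref{lemma:indep_cycle}), so the two increments $U$ and $V$ depend on disjoint index ranges and the fourth moment factors as $\Et{U^2}\Et{V^2}$; each factor is then evaluated via Lemma~\ref{lemma:approxsums} and bounded using the Lipschitz continuity of $\Gamma(\alpha,\cdot)$ on $[0,K]$. If anything the paper is terser than you: it writes $E^*$ directly as the product of the two variance sums, implicitly taking $\Et{U}=\Et{V}=0$, and leaves a trailing $+\o{1}$ uncontrolled in the final display, whereas you spell out $\Et{U^2}=\var(U)+(\Et{U})^2$ and flag that the recentering error needs attention. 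So the core of your proof is correct and matches the paper's route.

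Where your write-up goes astray is in the attempt to close the small-increment gap you yourself identify, in the regime $(x-x_1)n^*<1$. Two specific claims there are wrong. First, $(\overline n)^{1/2}/n^*=\o{1}$ holds only for $\alpha<2$: since $\overline n\asymp(n^*)^\alpha(\log n^*)^j$, the ratio is $\asymp(n^*)^{\alpha/2-1}(\log n^*)^{j/2}$, which stays bounded away from zero for $\alpha=2$ and diverges for $\alpha>2$, so your asserted bound $\Et{U^2}=\O{(n^*)^{-1}}$ does not follow in general. Second, the assertion that the left-hand side of \eqref{eq:goal} ``vanishes or is negligible unless both index ranges are nonempty'' is false as stated: even when $\lfloor x_1^*\rfloor=\lfloor x^*\rfloor$ the increment $U$ does not vanish, but equals the deterministic quantity $\sqrt{\overline n}\bigl(w_\infty^{\mathbf r}(x_1)-w_\infty^{\mathbf r}(x)\bigr)$ coming from the smooth limit-shape centering, which is \emph{not} piecewise constant in $x$. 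The standard ``no jump in a short window'' trick would require replacing the centering $\overline n\, w_\infty^{\mathbf r}(\cdot)$ by the piecewise-constant $\Et{w_n(\cdot\, n^*)}$ and then showing the switch is harmless; neither you nor the paper carry this out, so this particular point remains a gap in both proofs. Setting this caveat aside, for $x_1<x\le x_2$ at fixed mutual distance your proposal recovers the statement by the same mechanism the paper uses.
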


Lemma~\ref{lemma:tightness} together with \cite[Theorem 15.6]{Bi99} implies that the process $\widetilde{w}_n^{\mathbf r}$ is tight.
This and the marginals in \eqref{eq:marginals_w_n_Pt} prove Theorem~\ref{thm:func_CLT_pt}.
\begin{proof}[Proof of Lemma \ref{lemma:tightness}]
We define
\begin{equation}\label{eq:goal_E}
E^\ast:
=
\Et{(\widetilde w_n^{\mathbf r}(x^\ast)-\widetilde w_n^{\mathbf r}(x_1^\ast))^2 (\widetilde w_n^{\mathbf r}(x_2^\ast)-\widetilde w_n^{\mathbf r}(x^\ast))^2}.
\end{equation}
The independence of the cycle counts leads us to
\begin{eqnarray}
E^\ast\label{eq:center}
&=&
\left(\sum_{k=x_1^\ast}^{x^\ast-1}{(\bar n)^{-1}}\frac{\theta_k}{k}t^k \right) \cdot \left(\sum_{k=x^\ast}^{x_2^\ast-1}{(\bar n)^{-1}}\frac{\theta_k}{k}t^k\right)\nonumber\\
&\stackrel{Lem.~\ref{lemma:approxsums}}{\sim}&
\left(\frac{{(\bar n)^{-1}}}{\Gamma(\alpha+1)}\int_{x_1^\ast}^{x^\ast}{(\log t)^j} t^{\alpha-1}\exp{-t}\mathrm d t\right) 
\left(\frac{{(\bar n)^{-1}}}{\Gamma(\alpha+1)}\int_{x^\ast}^{x_2^\ast}{(\log t)^j}t^{\alpha-1}\exp{-t}\mathrm d t\right)\nonumber\\
&=&
\left(\frac{{\left(\bar n\right)^{-1}\left(\frac{\bar n}{n}\right)^{-\alpha}\left(-\log \left(\frac{\bar n}{n}\right)\right)^j}\left(\Gamma(\alpha,\,x_1)-\Gamma(\alpha,\,x)\right)}{\Gamma(\alpha+1)}\right)\nonumber\\
&&\left(\frac{{\left(\bar n\right)^{-1}\left(\frac{\bar n}{n}\right)^{-\alpha}\left(-\log \left(\frac{\bar n}{n}\right)\right)^j}\left(\Gamma(\alpha,\,x)-\Gamma(\alpha,\,x_2)\right)}{\Gamma(\alpha+1)}\right)+\o{1}\nonumber\\
&=&
\O{(x-x_1)(x_2-x)}=\O{(x_2-x_1)^2}.\nonumber
\end{eqnarray}
Here we have used the fact that $\Gamma(\alpha,\,\cdot)$ is a Lipschitz function and the assumption that $x_1<x\leq x_2<K$. Also note that $\left(\bar n\right)^{-1}\left(\frac{\bar n}{n}\right)^{-\alpha}(-\log \left(\frac{\bar n}{n}\right))^j =\O{1}.$
\end{proof}

\section{Saddle point method}
\label{sec:estmgf}

The aim of this section is to study the asymptotic behaviour of $w_n(x)$ with respect to $\Pbn{\cdot}$ as $n\to\infty$
and to compare the results with the results in Section~\ref{sec:randomization}.

% 
% -kchichin Meindarus method
% -saddlepoint method
% 
% 
% - gnerating for h_n
% 
% - generating for w_n(x) and for differences
% 
There are at least two approaches with which to tackle this problem: one is more probabilistic and was employed by \cite{ErGr08} in their paper. The second one was 
first developed in \cite{MaNiZe11} from the standard saddle point method.\\
The first method to study the asymptotic statistics of $w_n(x)$ with respect to $\Pbn{\cdot}$ as $n\to\infty$
is the so called Khintchine method.
We illustrate this method briefly with the normalisation constant $h_n$ (see \eqref{eq:def_weighted_pb_la}).
The first step is to write down a Khintchine's type representation for the desired quantity. For $h_n$ this is given by
\begin{align}
\label{eq:h_n_Khintchine}
 h_n 
 =
t^{-n}\mathrm{exp} \left(\sum_{k=1}^n \frac{\vartheta_k}{k} t^k \right) \Pbt{\sum_{k=1}^n k C_k = n}
\end{align}
with $t>0$ and $\Pbt{\,\cdot\,}$ as in Section~\ref{sec:randomization}. The second step is to choose the free parameter $t$ 
in such a way that $\Pbt{\sum_{k=1}^n k C_k = n}$ gets large. Here one can choose $t$ to be the solution of
the equation $\sum_{k=1}^n \vartheta_k t^k =n$.

This argumentation is very close to the argumentation relying on complex analysis and generating functions. Indeed, it is easy to see that
\eqref{eq:h_n_Khintchine} is equivalent to 
\begin{align}
\label{eq:gnerating_h_n}
% \sum_{n\in \N} h_n t^n =\mathrm{exp}\left(g_\Theta(t)\right)
h_n 
=
[t^n]\left[ \mathrm{exp}\left(g_\Theta(t)\right) \right]
\end{align}
with $g_\Theta(t)$ as in \eqref{eq:def_g_theta}. Furthermore, the choice of $t$ is (almost) 
the solution of the saddle point equation $tg_\Theta'(t) =n$.
We have of course to justify \eqref{eq:gnerating_h_n} (or \eqref{eq:h_n_Khintchine}).
But this follows immediately from the definition of $h_n$ and Lemma~\ref{lemma:Polya}.

We prefer at this point to work with the second approach.
We begin by writing down the generating functions of the quantities we would like to study.

\begin{lemma}
\label{lem:generating_w_n}
We have for $x \geq 0$ and $s \in\R$
\begin{align}
\label{eq:generating_w_n}
\En{\mathrm{exp}\bigl(-s w_n(x)\bigr)}
= 
\frac{1}{h_n}
[t^n] \left[\mathrm{exp}\left( g_\Theta(t) + (e^{-s}-1)\sum_{k=\floor{x} }^\infty \frac{\vartheta_k}{k}t^k \right) \right].
\end{align}
% Suppose that $x_\ell\geq x_{\ell-1}\geq \dots\geq x_1\geq 0$ are given. We then use the notation
% %
% \begin{align}
% \textbf{w}_n(\textbf{x}) = \bigl(w_n(x_1)-w_n(x_2),w_n(x_2)-w_n(x_3),\dots,w_n(x_{\ell-1})-w_n(x_\ell),w_n(x_\ell)\bigr)
% \end{align}
% %
% and obtain for $\textbf{s}=(s_1,\dots,s_\ell)\in\R^\ell$  
% %
% \begin{align}
% \label{eq:generating_w_n_x_y}
% \En{\mathrm{exp}\bigl(-\langle \textbf{s},\textbf{w}_n(\textbf{x})\rangle \bigr)}
% = 
% \frac{1}{h_n}
% [t^n] \left[\mathrm{exp}\left( g_\Theta(t) +\sum_{j=1}^\ell (e^{-s_j}-1)\sum_{k=\floor{x_j} }^{\floor{x_{j+1}-1} } \frac{\vartheta_k}{k}t^k \right) \right].
% \end{align}
% %
% % Furthermore we have for  $\textbf{x}=(x_1,\dots,x_\ell)\in\R^\ell$ with $x_1 \leq x_2\leq \dots\leq x_\ell$
% % %
% % \begin{align}
% % \label{eq:generating_w_n_x_y}
% % \En{\mathrm{exp}\bigl(-\langle \textbf{s},\textbf{w}_n(\textbf{x})\rangle \bigr)}
% % = 
% % \frac{1}{h_n}
% % [t^n] \left[\mathrm{exp}\left( g_\Theta(t) +\sum_{j=1}^\ell (e^{-\sum_{m=j}^\ell s_m}-1)\sum_{k=\floor{x_j} }^{\floor{x_{j+1}-1} } \frac{\vartheta_k}{k}t^k \right) \right],
% % \end{align}
% % where $\textbf{w}_n(\textbf{x}) = \bigl(w_n(x_1),\dots,w_n(x_\ell)\bigr)$ and $\textbf{s}=(s_1,\dots,s_\ell)\in\R^\ell$.
\end{lemma}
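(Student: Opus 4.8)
The plan is to derive \eqref{eq:generating_w_n} directly from P\'olya's Enumeration Theorem (Lemma~\ref{lemma:Polya}), using the cycle-count representation of $w_n(x)$ from \eqref{eq:def_weighted_pb_ck}. First I would write out the left-hand side as a sum over $\mathfrak S_n$ against the measure $\Pbn{\,\cdot\,}$:
\begin{align*}
\En{\exp{-s w_n(x)}}
=
\frac{1}{h_n n!}\sum_{\sigma\in \mathfrak S_n} \exp{-s \sum_{k\geq \floor{x}} C_k} \prod_{k=1}^n \vth_k^{C_k}
=
\frac{1}{h_n n!}\sum_{\sigma\in \mathfrak S_n} \prod_{k=1}^n a_k^{C_k},
\end{align*}
where I define the modified weight sequence $a_k := \vth_k$ for $k < \floor{x}$ and $a_k := e^{-s}\vth_k$ for $k\geq \floor{x}$. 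The point is that multiplying $\exp{-s\sum_{k\geq\floor{x}}C_k} = \prod_{k\geq\floor{x}} (e^{-s})^{C_k}$ into the product over $k$ just rescales the weights in the tail.

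Next I would apply Lemma~\ref{lemma:Polya} to the sequence $(a_k)$. The lemma gives, as formal power series in $t$,
\begin{align*}
\sum_{n\in\N} \frac{t^n}{n!}\sum_{\sigma\in\mathfrak S_n}\prod_{k=1}^n a_k^{C_k}
=
\mathrm{exp}\left(\sum_{m\geq 1}\frac{a_m}{m}t^m\right).
\end{align*}
Splitting the exponent according to the definition of $a_m$ yields
\begin{align*}
\sum_{m\geq 1}\frac{a_m}{m}t^m
=
\sum_{m\geq 1}\frac{\vth_m}{m}t^m + (e^{-s}-1)\sum_{m\geq \floor{x}}\frac{\vth_m}{m}t^m
=
g_\Theta(t) + (e^{-s}-1)\sum_{k\geq \floor{x}}\frac{\vth_k}{k}t^k,
\end{align*}
with $g_\Theta$ as in \eqref{eq:def_g_theta}. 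Extracting the coefficient of $t^n$ from both sides, i.e. applying $[t^n]$, recovers $\frac{1}{n!}\sum_{\sigma\in\mathfrak S_n}\prod_k a_k^{C_k}$ on the left, and dividing by $h_n$ gives exactly \eqref{eq:generating_w_n}.

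There is essentially no serious obstacle here; the only points requiring a word of care are that $w_n(x)=\sum_{k\geq\floor{x}}C_k$ really does depend on $x$ only through $\floor{x}$ (clear from \eqref{eq:def_w_n_part}, since $C_k\neq 0$ only for integer $k$), and that the manipulation is legitimate: since we work with formal power series in $t$ the identity in Lemma~\ref{lemma:Polya} holds unconditionally, and for $s\in\R$ the relevant series converge absolutely for $|t|$ small enough, so the coefficient extraction is justified and the stated identity holds as an identity of real numbers. I would state these two remarks in a single sentence and consider the proof complete.
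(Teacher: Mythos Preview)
Your proposal is correct and follows essentially the same approach as the paper: both write the expectation as a sum over $\mathfrak S_n$ with modified cycle weights $a_k=\vth_k$ for $k<\floor{x}$ and $a_k=e^{-s}\vth_k$ for $k\geq\floor{x}$, apply Lemma~\ref{lemma:Polya}, and extract the coefficient of $t^n$. Your added remarks about $w_n(x)$ depending only on $\floor{x}$ and about convergence are fine but not present in the paper's version.
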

\begin{remark}\label{rem:s_positive}
Although the expressions in Lemmas~\ref{lem:generating_w_n} and \ref{lem:generating_w_n_x_y} 
hold in broader generality, starting from Subsection~\ref{subsec:log-n-adm} we will calculate moment generating functions on the positive half-line, namely we can assume all parameters $ s_1,\,\ldots,\,s_\ell$ etc to be non-negative, 
according to \cite[Theorem 2.2]{Cha07}.
\end{remark}

\begin{proof}
 It follows from the definitions of $\Pbn{\,\cdot\,}$ and $w_n(x)$ (see \eqref{eq:def_weighted_pb_ck}) that
\begin{align}
 h_n \En{\mathrm{exp}\bigl(-s w_n(x)\bigr)}
&=
\frac{1}{n!} \sum_{\sigma\in\mathfrak S_n} \mathrm{exp}\left(-s \sum_{k=\floor{x}}^n C_k\right) \prod_{k=1}^{n} \vartheta_k^{C_k} \\
&=
\frac{1}{n!} \sum_{\sigma\in\mathfrak S_n} \prod_{k=1}^{\floor{x}-1} \vartheta_k^{C_k} \prod_{m=\floor{x}}^\infty (\vartheta_k e^{-s})^{C_k} \nonumber
\end{align}
Applying now Lemma~\ref{lemma:Polya}, we obtain 
\begin{align}
\sum_{n=0}^\infty \frac{t^n}{n!} h_n \En{\mathrm{exp}\bigl(-s w_n(x)\bigr)} 
&=
\mathrm{exp}\left(\sum_{k=1}^{\floor{x}-1} \frac{\vartheta_k}{k}t^k + e^{-s} \sum_{k=\floor{x}}^\infty \frac{\vartheta_k}{k}t^k \right)\\
&=
\mathrm{exp}\left( g_\Theta(t) + (e^{-s}-1)\sum_{k=\floor{x} }^\infty \frac{\vartheta_k}{k}t^k \right) 
\end{align}
Equation \eqref{eq:generating_w_n} now follows by taking $[t^n]$ on both sides.
\end{proof}

We are also interested in the joint behaviour at different points of the limit shape.
The results in Section~\ref{sec:randomization} suggest that the increments of $w_n(x_{j+1})-w_n(x_j)$ are independent
for $x_\ell\geq x_{\ell-1}\geq \dots\geq x_1\geq 0$.
It is thus natural to consider
\begin{equation}\label{eq:increm}
\mathbf{w}_n(\textbf{x})=\bigl(w_n(x_\ell),\,w_n(x^*_{\ell-1})-w_n(x_\ell),\,\ldots,\,w_n(x_1)-w_n(x_2)\bigr).
\end{equation}
We obtain
\begin{lemma}
\label{lem:generating_w_n_x_y}
We have for $\textbf{x}=(x_1,\dots,x_\ell)\in\R^\ell$  with $x_\ell\geq x_{\ell-1}\geq \dots\geq x_1\geq 0$ and $\textbf{s}=(s_1,\dots,s_\ell)\in\R^\ell$ 
\begin{align}
\label{eq:generating_w_n_x_y}
\En{\mathrm{exp}\bigl(-\langle \textbf{s},\textbf{w}_n(\textbf{x})\rangle \bigr)}
= 
\frac{1}{h_n}
[t^n] \left[\mathrm{exp}\left( g_\Theta(t) +\sum_{j=1}^\ell (e^{-s_j}-1)\sum_{k=\floor{x_j} }^{\floor{x_{j+1}-1} } \frac{\vartheta_k}{k}t^k \right) \right]
\end{align}
\end{lemma}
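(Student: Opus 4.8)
The plan is to follow the proof of Lemma~\ref{lem:generating_w_n} essentially verbatim, the only new ingredient being the bookkeeping of the several blocks of indices determined by $x_1,\dots,x_\ell$. Set $x_{\ell+1}:=+\infty$. Unwinding the definitions of $\Pbn{\,\cdot\,}$ and $w_n$ in \eqref{eq:def_weighted_pb_ck} gives
\begin{align*}
h_n \En{\mathrm{exp}\bigl(-\langle \textbf{s},\textbf{w}_n(\textbf{x})\rangle\bigr)}
=
\frac{1}{n!}\sum_{\sigma\in\mathfrak S_n}\mathrm{exp}\bigl(-\langle \textbf{s},\textbf{w}_n(\textbf{x})\rangle\bigr)\prod_{k=1}^n \vth_k^{C_k}.
\end{align*}

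Next I would rewrite the exponent through the cycle counts. Since $w_n(x)=\sum_{k\geq\floor{x}}C_k$, every increment $w_n(x_j)-w_n(x_{j+1})$ equals $\sum_{k=\floor{x_j}}^{\floor{x_{j+1}}-1}C_k$; reading off from \eqref{eq:increm} which component of $\textbf{w}_n(\textbf{x})$ is multiplied by which $s_j$, one sees that $\langle\textbf{s},\textbf{w}_n(\textbf{x})\rangle$ is the linear form in the $C_k$ whose coefficient on the block $\floor{x_j}\leq k\leq\floor{x_{j+1}}-1$ is $s_j$ and whose coefficient on $k<\floor{x_1}$ is $0$. Hence, setting $a_k:=\vth_k$ for $k<\floor{x_1}$ and $a_k:=\vth_k e^{-s_j}$ for $\floor{x_j}\leq k\leq\floor{x_{j+1}}-1$, the summand above is exactly $\prod_{k=1}^n a_k^{C_k}$.

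Then I would apply P\'olya's enumeration theorem (Lemma~\ref{lemma:Polya}) to the sequence $(a_k)_{k\in\N}$, which yields
\begin{align*}
\sum_{n\geq 0}\frac{t^n}{n!}\,h_n \En{\mathrm{exp}\bigl(-\langle \textbf{s},\textbf{w}_n(\textbf{x})\rangle\bigr)}
=
\mathrm{exp}\left(\sum_{k=1}^{\floor{x_1}-1}\frac{\vth_k}{k}t^k+\sum_{j=1}^\ell e^{-s_j}\sum_{k=\floor{x_j}}^{\floor{x_{j+1}}-1}\frac{\vth_k}{k}t^k\right).
\end{align*}
Since the blocks $[1,\floor{x_1}-1]$ and $[\floor{x_j},\floor{x_{j+1}}-1]$ for $j=1,\dots,\ell$ partition $\N$, the exponent equals $g_\Theta(t)+\sum_{j=1}^\ell(e^{-s_j}-1)\sum_{k=\floor{x_j}}^{\floor{x_{j+1}}-1}\frac{\vth_k}{k}t^k$, and extracting $[t^n]$ on both sides gives \eqref{eq:generating_w_n_x_y}.

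I do not expect any genuine obstacle: this is simply the $\ell$-point version of Lemma~\ref{lem:generating_w_n}, recovered by taking $\ell=1$. The only mild points of care are (i) pairing each $s_j$ with the correct block of indices, i.e.\ respecting the ordering convention of the components in \eqref{eq:increm}; and (ii) if one wants the statement as an identity of analytic functions rather than of formal power series, the absolute convergence of the series involved — guaranteed, by Remark~\ref{rem:s_positive} and \cite{Cha07}, in the range $s_1,\dots,s_\ell\geq 0$ in which the lemma is subsequently used, whereupon Lemma~\ref{lemma:Polya} transfers convergence from one series to all the others.
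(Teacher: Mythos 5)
Your proposal is correct and follows exactly the route the paper indicates: it is the direct $\ell$-block extension of the proof of Lemma~\ref{lem:generating_w_n}, which the paper omits here precisely because it is "almost the same." Unwinding $\langle\mathbf s,\mathbf w_n(\mathbf x)\rangle$ into a linear form in the cycle counts, packaging the summand as $\prod_k a_k^{C_k}$ with $a_k=\vth_k$ on $k<\floor{x_1}$ and $a_k=\vth_k e^{-s_j}$ on the $j$-th block, and invoking P\'olya's enumeration theorem is exactly the intended argument; your observation that the blocks partition $\N$ (with $x_{\ell+1}=+\infty$) then reassembles the exponent into $g_\Theta(t)+\sum_j(e^{-s_j}-1)\sum_k\frac{\vth_k}{k}t^k$. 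One small point of honesty worth flagging, which you already gesture at in (i): if one tracks the ordering of components in \eqref{eq:increm} literally, the $j$-th block $[\floor{x_j},\floor{x_{j+1}}-1]$ is paired with $s_{\ell-j+1}$ rather than $s_j$; the lemma as stated tacitly relabels the $s$'s, and the paper's own formula carries the same (immaterial) relabeling, so your statement and the paper's agree. Your remark (ii) on convergence via Remark~\ref{rem:s_positive} correctly addresses the only analytic subtlety.
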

with the convention $x_{\ell+1}:=+\infty$.
The proof of this lemma is almost the same as for Lemma~\ref{lem:generating_w_n} and we thus omit it.
% \begin{remark}[Kolmogorov's extension theorem]
% The expression \eqref{eq:generating_w_n_x_y} is also helpful to ensure consistency of the marginal laws. Indeed for all $\ell\in\N$, $x_1,\,\ldots,\,x_\ell\in \R^+b$ it follows from 
% $$\E{\exp{\langle \mathbf w_n(\mathbf x),\mathbf s\rangle}}=\E{\exp{\langle \pi(\mathbf w_n(\mathbf x)),\pi(\mathbf s)\rangle}} $$
% where $\pi(\mathbf w_n(\mathbf x)):=(w_n(x_{\pi(1)}),\,\ldots,\,w_n(x_{\pi(1)}))$ and similarly for $\pi(\mathbf s)$, that Kolmogorov's extension theorem can immediately be shown.
% \end{remark}

% We now know how to control the terms coming from the second summand of $g(r_n)$.
% \begin{eqnarray*}
% &&g(r_n)=(n^*)^\alpha+(n^*)^{\alpha/2}s\left( \frac{\Gamma(\alpha+1,\,x)}{\Gamma(\alpha+2)}-\frac{\Gamma(\alpha,x)}{\Gamma(\alpha+1)}+\O{(n^*)^{-1}}\right)\\
% &&+\Gamma(\alpha+1)^{-1}\left(n^* s^2\left( \frac{\alpha+\alpha^2}{2}\left(\frac{\Gamma(\alpha+1,\,x)}{\Gamma(\alpha+2)}\right)^2+\alpha\left( \Gamma(\alpha+1,\,x)\frac{\Gamma(\alpha+1,\,x)}{\Gamma(\alpha+2)}\right)\right.\right.\\
% &&+\left.\Gamma(\alpha,x)\right)\left.s^3\O{ (n^*)^{-3\alpha/2}\left(1+(n^*)^\alpha\right)}\right)
% \end{eqnarray*}

\subsection{\texorpdfstring{Log-$n$-admissibility}{Log-n-admissibility}}\label{subsec:log-n-adm}
The approach with which we first addressed the study of the limit shape is derived from the saddle point method for approximating integrals 
in the complex plane. We want to introduce the definition of log-$n$-admissible function, generalizing the analogous concept introduced in \cite{MaNiZe11}. We stress that here, in comparison to the definition of log- (or equivalently Hayman) admissibility used there, we consider a family of functions parametrized by $n$ for which log-admissibility holds simultaneously. The definition is therefore a natural extension.
\begin{definition} 
\label{def:admissible}

Let $\bigl( g_n(t)\bigr)_{n\in\N}$ with $g_n(t) = \sum_{k=0}^\infty g_{k,n} t^k$ be given with radius of convergence $\rho > 0$ and $g_{k,n} \geq 0$.
We say that $\bigl( g_n(t)\bigr)_{n\in\N}$ is \emph{$\log$-$n$-admissible} 
if there exist functions $a_n,b_n:[0,\rho) \to \R^+$, $R_n : [0,\rho) \times (-\pi/2, \pi/2) \to \R^+$ and a sequence $(\delta_n)_{n\in\N}$ s. t.
\begin{description}
  \item[Saddle-point] For each $n$ there exists $r_n \in [0,\rho)$ with \begin{equation}a_n(r_n)= n \label{eq:saddle_point_equation} \end{equation}
  \item[Approximation] For all $\abs{\varphi} \leq \delta_n$ we have the expansion
        \begin{align}
          g_n(r_ne^{i\varphi})
          =
          g_n(r_n) + i\varphi a_n(r_n)-\frac{\varphi^2}{2} b_n(r_n)
          + R_n(r_n,\varphi)
          \label{eq_admissible_expansion}
        \end{align}
        where $R_n(r_n,\varphi) = o(\varphi^3 \delta_n^{-3})$. % and the implied constant is uniform.
  \item[Divergence] $b_n(r_n) \to \infty$ and $\delta_n \to 0$ as $n \to \infty$.
  \item[Width of convergence] We have $\delta_n^2 b_n(r_n) - \log b_n(r_n) \to +\infty$ as $n \to + \infty$.
  \item[Monotonicity] For all $\abs{\varphi} > \delta_n$, we have
        \begin{align}\label{eq:monotonicity}
            \Re{g_n(r_n e^{\i \varphi})} \leq \Re{g(r_n e^{\pm \i \delta_n})}.
        \end{align}
\end{description}
\end{definition}

The approximation condition allows us to compute the functions $a$ and $b$ exactly.  We have
\begin{align}
\label{eq:a_n_general_explicit}
  a_n(r) &= rg_n'(r), \\
  \label{eq:b_n_general_explicit}
  b_n(r) &= rg_n'(r) +r^2 g_n''(r)
\end{align}
Clearly $a_n$ and $b_n$ are strictly increasing real analytic functions in $[0, \rho)$. 
The error in the approximation can similarly be bounded, so that
\[
  R_n(r,\varphi)= \varphi^3 \O{ rg_n'(r) + 3r^2 g_n''(r) + r^3 g_n'''(r)}
\]

Having proved Lemma \ref{lem:generating_w_n} we are now able to write down in a more explicit way generating functions. What we are left with is trying to extract the coefficients 
of the expansion given therein. This is the content of
\begin{theorem} \label{thm:generalasymptotic}
Let $\bigl( g_n(t)\bigr)_{n\in\N}$ be $\log$-$n$-admissible with associated functions $a_n$, $b_n$ and constants $r_n$. 
Call 
$$G_n:=[t^n]\exp{g_n(t)}.$$
Then $G_{n}$ has the asymptotic expansion
\begin{equation}\label{eq:G_n}
 G_{n} = \frac{1}{\sqrt{2 \pi}} (r_{n})^{-n} b_n(r_{n})^{-1/2} \exp{g_n(r_{n})} (1 + o(1)).
\end{equation}
\end{theorem}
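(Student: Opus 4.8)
The plan is to prove \eqref{eq:G_n} via the saddle point method, extracting the coefficient $G_n = [t^n]\exp{g_n(t)}$ by Cauchy's integral formula on the circle $\abs{t} = r_n$, where $r_n$ is the saddle point solving $a_n(r_n) = r_n g_n'(r_n) = n$. First I would write
\[
G_n = \frac{1}{2\pi i}\oint_{\abs{t}=r_n} \frac{\exp{g_n(t)}}{t^{n+1}}\,\mathrm d t
= \frac{(r_n)^{-n}}{2\pi}\int_{-\pi}^{\pi} \exp{g_n(r_n e^{i\varphi}) - i n \varphi}\,\mathrm d\varphi,
\]
and then split the range of integration into the \emph{central part} $\abs{\varphi}\leq \delta_n$ and the \emph{tails} $\delta_n < \abs{\varphi}\leq \pi$.

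For the central part I would substitute the approximation \eqref{eq_admissible_expansion}, using the saddle-point equation to cancel the linear term: the integrand becomes
\[
\exp{g_n(r_n)}\cdot\exp{-\tfrac{\varphi^2}{2}b_n(r_n) + R_n(r_n,\varphi)},
\]
with $R_n(r_n,\varphi) = o(\varphi^3\delta_n^{-3})$ uniformly on $\abs{\varphi}\leq\delta_n$. Since on the central range $\abs{R_n} = o(\delta_n^3 \delta_n^{-3}) = o(1)$, one can expand $\exp{R_n} = 1 + o(1)$ and reduce to the Gaussian integral $\int_{-\delta_n}^{\delta_n}\exp{-\varphi^2 b_n(r_n)/2}\,\mathrm d\varphi$. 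Rescaling by $u = \varphi\sqrt{b_n(r_n)}$ turns this into $b_n(r_n)^{-1/2}\int_{-\delta_n\sqrt{b_n(r_n)}}^{\delta_n\sqrt{b_n(r_n)}}\exp{-u^2/2}\,\mathrm d u$; by the width-of-convergence condition $\delta_n^2 b_n(r_n)\to\infty$ the limits go to $\pm\infty$ and the integral converges to $\sqrt{2\pi}$. This yields the claimed main term $\frac{1}{\sqrt{2\pi}}(r_n)^{-n}b_n(r_n)^{-1/2}\exp{g_n(r_n)}(1+o(1))$, provided one also checks that the tails of this Gaussian beyond $\pm\delta_n\sqrt{b_n(r_n)}$ are negligible, which follows from $\delta_n^2 b_n(r_n) - \log b_n(r_n)\to\infty$ (this is exactly why that slightly stronger form of the width condition is imposed: the error from replacing the finite Gaussian by the full one is $\O{b_n(r_n)^{-1/2}\exp{-\delta_n^2 b_n(r_n)/2}}$, which must be $o(b_n(r_n)^{-1/2})$, equivalently $\delta_n^2 b_n(r_n)\to\infty$; the logarithm refinement gives the uniform comparison against the main term).

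For the tails I would use the monotonicity condition \eqref{eq:monotonicity}: for $\abs{\varphi} > \delta_n$ we have $\Re{g_n(r_n e^{i\varphi})} \leq \Re{g_n(r_n e^{\pm i\delta_n})}$, so
\[
\left|\int_{\delta_n < \abs{\varphi}\leq\pi}\exp{g_n(r_n e^{i\varphi}) - in\varphi}\,\mathrm d\varphi\right|
\leq 2\pi \exp{\Re{g_n(r_n e^{i\delta_n})}}.
\]
Applying the approximation \eqref{eq_admissible_expansion} at $\varphi = \delta_n$ gives $\Re{g_n(r_n e^{i\delta_n})} = g_n(r_n) - \tfrac{\delta_n^2}{2}b_n(r_n) + o(1)$, so the tail contribution is $\O{\exp{g_n(r_n)}\exp{-\delta_n^2 b_n(r_n)/2}}$. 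Comparing with the main term, which is of order $\exp{g_n(r_n)}b_n(r_n)^{-1/2}$, the tail is negligible precisely when $\exp{-\delta_n^2 b_n(r_n)/2} = o(b_n(r_n)^{-1/2})$, i.e. again when $\delta_n^2 b_n(r_n) - \log b_n(r_n)\to+\infty$. Collecting the central and tail estimates completes the proof.

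I expect the main obstacle to be the bookkeeping of error terms in the central integral — specifically, verifying that $\exp{R_n(r_n,\varphi)}$ can be replaced by $1+o(1)$ \emph{uniformly enough} after the Gaussian rescaling, and that the interplay between the truncation level $\delta_n$, the curvature $b_n(r_n)$, and the cubic error $R_n$ is consistent. The role of the conditions in Definition~\ref{def:admissible} is exactly to make each of these steps go through: the saddle-point equation kills the linear term, the approximation controls the central part, divergence and width-of-convergence make the rescaled Gaussian fill out the whole line, and monotonicity handles the tails; so the proof is essentially a matter of assembling these ingredients carefully rather than of any single deep idea.
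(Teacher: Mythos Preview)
Your proposal is correct and follows essentially the same route as the paper: Cauchy's formula on the circle of radius $r_n$, the central/tail split at $\pm\delta_n$, Gaussian approximation on the central arc via \eqref{eq_admissible_expansion} with the saddle-point equation killing the linear term, and the monotonicity condition to bound the tails. The paper in fact gives only a brief sketch (deferring details to \cite[Prop.~2.2]{MaNiZe11}), so your write-up is more explicit than theirs but identical in structure.
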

\begin{remark}
As it is explained in 
\cite[Chapter VIII]{FlSe09} it is possible to take into account more error terms in the expansion of $g_n$. We could also extract here the behaviour of the coefficients $h_n$. However in the computations we will not need it explicitly, since these terms will always cancel out.
\end{remark}

\begin{proof}[Proof of Theorem \ref{thm:generalasymptotic}]
The proof is exactly the same as in \cite[Prop. 2.2]{MaNiZe11} and we thus give only a quick sketch of it, referring 
the reader to this paper for more details. As in the well-known 
saddle point method, we want to evaluate the integral
$$
\frac{1}{2\pi \i}\oint_\gamma \mathrm{exp}\left(g_n(z)\right)\frac{\mathrm d z}{z^{n+1}}.
$$
We choose as contour the circle $\gamma:= r_n e^{\i\varphi}$ with $\varphi\in[-\pi,\pi]$. 
On $\vr \in [-\delta_n,\,\delta_n]$ after changing to polar coordinates we can expand the function $g$ as
$$
\int_{-\delta_n}^{\delta_n}\mathrm{exp}\left(g_n(r) + \i\varphi a_n(r)-\frac{\varphi^2}{2} b_n(r)
          + o(\varphi^3 \delta_n^{-3})-\i n\vr\right)\mathrm d \vr
$$
We now choose $r_n$ such that $a(r_n)=r_n g'_n(r_n)=n$ in order to cancel the linear terms in $n$. 
This allows us to approximate the integral on the minor arc with a Gaussian. One shows that away from the saddle point 
(so for $\abs{\vr}>\delta_n$) the 
contribution is exponentially smaller than on the minor arc and thus it can be neglected.
\end{proof}
We would like to emphasize also that it will be not 
always possible to solve the saddle point equation \eqref{eq:saddle_point_equation} exactly. However it is enough to find an $r_n$ such that
\begin{equation}\label{eq:approximate_saddle_equation}
a(r_n)-n=\o{\sqrt{b(r_n)}}
\end{equation}
holds.

\subsection{Calculation of the limit shape}\label{sec:calculation}
In this section we will derive the limit shape for Young diagrams for the class of measures given by the weights. We will not go into all the details to prove the log-$n$-admissibility for the most general case,
but will try to give a precise overview of the main steps nonetheless. One important remark we have to make is that our parameter $s$ 
will not be fixed, but will be scaled and hence dependent on $n$. 
This comes from the fact that for a fixed $s$  \eqref{eq:saddle_point_equation} becomes a fixed point equation whose solution cannot be given constructively, but has only an implicit form. We were not able to use this information for our purposes, and hence preferred to exploit a less general, but more explicit parameter to calculate asymptotics. \\
\subsubsection{Limit shape}
The main goal of this subsection is to prove that the weights (\ref{eq:weights}) induce a sequence of log-$n$-admissible functions of which we can recover 
the asymptotics of $g_n(r_n)$. This will give us the limit shape of the Young diagram according to Theorem \ref{thm:generalasymptotic}. Hence we pass to showing Theorem~\ref{thm:limit_shape_saddle}, that is the limit shape is 
 $$
 w_\infty^{\mathbf s}(x)=\frac{\Gamma(\alpha,x)}{\Gamma(\alpha+1)},
 $$
 and the fluctuations at a point $x$ behave like
 \begin{eqnarray*}&&
\widetilde w_n^{\mathbf s}(x)
=
\frac{w_n(x^*)- \overline n\left(w_\infty^{\mathbf s}(x)+z_n^{\mathbf s}\right)}{(\overline n)^{1/2}}
\stackrel{\mathcal L}{\longrightarrow}
\mathcal N\left(0, \sigma_\infty^2(x)\right)
\end{eqnarray*}
 with $x^\ast= xn^\ast$ and $\sigma_\infty^2(x)$ as in Theorem~\ref{thm:limit_shape_saddle}.

\begin{remark}
We note that the limit shape matches the one obtained in \cite[Thm. 4.8]{ErGr08} and also the one obtained in the present paper in the randomized case (cf. the definition of $w_\infty^{\mathbf r}(x)$ of Prop. \ref{prop:lim_sha_rand}).
\end{remark}

As indicated in the introduction, we proof Theorem~\ref{thm:limit_shape_saddle} by computing the Laplace transform.
We now have
 \begin{proposition}\label{prop:relies2}
We have for $s=O(1)$ and with respect to $\mathbb{P}_n$ as $n\to\infty$
$$
\En{\mathrm{exp}\bigl(-s\widetilde w_n^{\mathbf s}(x)\bigr)}={\sigma_\infty^2(x)}\frac{s^2}{2}+\O{(\overline n)^{-\frac{1}{2}}s^3}. 
$$
\end{proposition}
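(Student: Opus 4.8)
The plan is to express the Laplace transform through the generating function of Lemma~\ref{lem:generating_w_n} and to extract its coefficient with the saddle-point Theorem~\ref{thm:generalasymptotic}; concretely I would compute the logarithm $\widetilde\Lambda(-s):=\log\En{\mathrm{exp}(-s\widetilde w_n^{\mathbf s}(x))}$ and show it equals $\sigma_\infty^2(x)\tfrac{s^2}{2}+\O{(\overline n)^{-1/2}s^3}$ for $s=\O1$. Put $s^\ast:=s(\overline n)^{-1/2}$ and $x^\ast:=xn^\ast$. From $-s\widetilde w_n^{\mathbf s}(x)=-s^\ast w_n(x^\ast)+s^\ast\overline n\bigl(w_\infty^{\mathbf s}(x)+z_n^{\mathbf s}(x)\bigr)$ and Lemma~\ref{lem:generating_w_n}, applied with $s^\ast$ in place of $s$ and $x^\ast$ in place of $x$, one obtains
\[
\En{\mathrm{exp}(-s\widetilde w_n^{\mathbf s}(x))}=\mathrm{exp}\!\bigl(s^\ast\overline n(w_\infty^{\mathbf s}(x)+z_n^{\mathbf s}(x))\bigr)\,\frac{[t^n]\mathrm{exp}\,g_n(t)}{h_n},
\]
where $g_n(t):=g_\Theta(t)+(e^{-s^\ast}-1)\sum_{k\geq\floor{x^\ast}}\tfrac{\vartheta_k}{k}t^k$; for $s\geq0$ (all we need, by Remark~\ref{rem:s_positive}) the series $g_n$ has non-negative coefficients, so $(g_n)_n$ is a candidate for log-$n$-admissibility.

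The first real step is to verify that $(g_n)_n$ is log-$n$-admissible, uniformly for $s$ in a bounded interval. The unperturbed family $g_\Theta$ is log-$n$-admissible — this is exactly what the proof of Theorem~\ref{thm:limit_shape_saddle} establishes in computing $h_n$ — with saddle point $\rho_n=e^{-v_n}$, $a_\Theta(\rho_n)=n$ and $b_\Theta(\rho_n)=\sum_{k\geq1}k\vartheta_k\rho_n^k$. The perturbation $h_s(t):=(e^{-s^\ast}-1)\sum_{k\geq\floor{x^\ast}}\tfrac{\vartheta_k}{k}t^k$ contributes $rh_s'(r)=(e^{-s^\ast}-1)\sum_{k\geq\floor{x^\ast}}\vartheta_k r^k=\O{(\overline n)^{-1/2}n}$ near $\rho_n$, which is $\o{b_\Theta(\rho_n)}$ but \emph{not} $\o{\sqrt{b_\Theta(\rho_n)}}$; accordingly $a_n=rg_n'$ and $b_n=rg_n'+r^2g_n''$ agree with $a_\Theta,b_\Theta$ to leading order near $\rho_n$, the approximation, divergence, width-of-convergence and monotonicity conditions are inherited from $g_\Theta$, but the saddle must genuinely be displaced: one takes $r_n=\rho_n e^{\eta_n}$ with $\eta_n\to0$, and expanding $a_\Theta$ about $\rho_n$ gives $b_\Theta(\rho_n)\eta_n=-(e^{-s^\ast}-1)\sum_{k\geq\floor{x^\ast}}\vartheta_k\rho_n^k+(\text{lower order})$.

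Applying Theorem~\ref{thm:generalasymptotic} to $[t^n]\mathrm{exp}\,g_n(t)$ and to $h_n=[t^n]\mathrm{exp}\,g_\Theta(t)$, the $(2\pi)^{-1/2}$-prefactors cancel, and since $b_n(r_n)/b_\Theta(\rho_n)\to1$ the $b$-factors contribute only $1+\o1$, so
\[
\log\frac{[t^n]\mathrm{exp}\,g_n(t)}{h_n}=-n\eta_n+\bigl(g_\Theta(r_n)-g_\Theta(\rho_n)\bigr)+h_s(r_n)+\o1.
\]
Writing $G(u):=g_\Theta(e^u)$, with $G'(u_0)=a_\Theta(\rho_n)=n$ and $G''(u_0)=b_\Theta(\rho_n)$ at $u_0=\log\rho_n$, and Taylor-expanding about $u_0$, the first two terms sum to $\tfrac12 b_\Theta(\rho_n)\eta_n^2$ up to a $G'''\eta_n^3$-term of size $\O{(\overline n)^{-1/2}s^3}$; adding the linear term $(e^{-s^\ast}-1)\bigl(\sum_{k\geq\floor{x^\ast}}\vartheta_k\rho_n^k\bigr)\eta_n$ from $h_s(r_n)$ and substituting the expression for $\eta_n$ (a completion of the square) yields
\[
\log\frac{[t^n]\mathrm{exp}\,g_n(t)}{h_n}=(e^{-s^\ast}-1)\sum_{k\geq\floor{x^\ast}}\frac{\vartheta_k}{k}\rho_n^k-\frac{\Bigl((e^{-s^\ast}-1)\sum_{k\geq\floor{x^\ast}}\vartheta_k\rho_n^k\Bigr)^2}{2\,b_\Theta(\rho_n)}+\O{(\overline n)^{-1/2}s^3}+\o1.
\]

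It remains to insert the closed-form asymptotics. By Lemma~\ref{lemma:approxsums} and Lemma~\ref{lemma:polylog_asymp} — as used in the proof of Proposition~\ref{prop:lim_sha_rand} — $\sum_{k\geq\floor{x^\ast}}\tfrac{\vartheta_k}{k}\rho_n^k=\overline n\,w_\infty^{\mathbf r}(x)+(\text{lower order})$ with $w_\infty^{\mathbf r}(x)=\tfrac{\Gamma(\alpha,x)}{\Gamma(\alpha+1)}$, and $\sum_{k\geq\floor{x^\ast}}\vartheta_k\rho_n^k$ and $b_\Theta(\rho_n)$ admit the corresponding incomplete-Gamma leading terms. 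One then sets $z_n^{\mathbf s}(x):=\overline n^{-1}\sum_{k\geq\floor{x^\ast}}\tfrac{\vartheta_k}{k}\rho_n^k-w_\infty^{\mathbf s}(x)=\o1$, so that the linear-in-$s$ contribution $s^\ast\overline n(w_\infty^{\mathbf s}(x)+z_n^{\mathbf s}(x))+(e^{-s^\ast}-1)\sum_{k\geq\floor{x^\ast}}\tfrac{\vartheta_k}{k}\rho_n^k$ vanishes identically — this is precisely where the ``lower order'' corrections flagged in the remark after Theorem~\ref{thm:limit_shape_saddle} and in Remark~\ref{rem:order_E} are absorbed. Expanding $e^{-s^\ast}-1=-s^\ast+\tfrac12(s^\ast)^2+\O{(s^\ast)^3}$ and using $(s^\ast)^2=s^2/\overline n$, the surviving $s^2$-contribution is $\tfrac{s^2}{2}\bigl(\overline n^{-1}\sum_{k\geq\floor{x^\ast}}\tfrac{\vartheta_k}{k}\rho_n^k-\overline n^{-1}b_\Theta(\rho_n)^{-1}(\sum_{k\geq\floor{x^\ast}}\vartheta_k\rho_n^k)^2\bigr)$, whose limit is precisely $\sigma_\infty^2(x)$, while all remaining terms are $\O{(\overline n)^{-1/2}s^3}$. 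I expect the main obstacles to be (i) the uniform-in-$s$ verification of log-$n$-admissibility of the perturbed family $(g_n)_n$ — especially the monotonicity and width-of-convergence conditions — rather than merely for the fixed profile $g_\Theta$, and (ii) the bookkeeping of the displaced saddle point $\eta_n$ together with the several non-negligible lower-order terms; it is exactly this displacement, reflecting the constraint $\sum_k kC_k=n$, that produces the negative covariance correction in $\sigma_\infty^2(x)$ and hence the difference from the randomized variance $w_\infty^{\mathbf r}(x)$ of Corollary~\ref{corol:CLT_random}.
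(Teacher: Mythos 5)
Your proposal follows essentially the same route as the paper: expand $\En{e^{-s\widetilde w_n^{\mathbf s}(x)}}$ via Lemma~\ref{lem:generating_w_n}, show the perturbed exponent $g_n=g_\Theta+(e^{-s^\ast}-1)\sum_{k\geq\floor{x^\ast}}\frac{\vartheta_k}{k}t^k$ is log-$n$-admissible, invoke Theorem~\ref{thm:generalasymptotic}, and track the $s^2$ coefficient. The one place where your packaging differs is the treatment of the displaced saddle: you write $r_n=\rho_n e^{\eta_n}$, linearize $a_\Theta$ to identify $\eta_n=-h_s'(\rho_n)\rho_n/b_\Theta(\rho_n)+\ldots$, and complete the square, so that the log-ratio is
$$(e^{-s^\ast}-1)\sum_{k\geq\floor{x^\ast}}\tfrac{\vartheta_k}{k}\rho_n^k-\frac{\bigl((e^{-s^\ast}-1)\sum_{k\geq\floor{x^\ast}}\vartheta_k\rho_n^k\bigr)^2}{2\,b_\Theta(\rho_n)}+\text{l.o.},$$
whereas the paper bakes the displacement directly into Lemma~\ref{lemma:funct_admiss} via $v_n=p_n(1-sq_n)$ and then expands $g_\Theta(r_n)$, the perturbation, and $-n\log r_n$ separately (see \eqref{eq:g_equation} onward). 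Your ``variance minus conditioning penalty'' formula is a compact and transparent way to see where the negative covariance correction in $\sigma^2_\infty(x)$ comes from; both yield the same arithmetic.

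The two issues you flag as obstacles are exactly the parts the paper supplies as separate lemmas, so they are not gaps you are silently skipping but work you would indeed have to do. Obstacle (i), uniform-in-$s$ log-$n$-admissibility of the perturbed family with the displaced saddle, is Lemma~\ref{lemma:funct_admiss} — note that the paper does \emph{not} establish admissibility of $g_\Theta$ alone and then perturb; it verifies the five conditions directly for the full $g_n$, which is cleaner than the ``inherited from $g_\Theta$'' argument you sketch. Obstacle (ii), the precise expansion of the sums at the displaced point, is handled by Proposition~\ref{prop:approx_scaling} built on Lemma~\ref{lemma:approxsums}. Your identification of $z_n^{\mathbf s}(x)$ with the $o(1)$ lower-order correction of $\overline n^{-1}\sum_{k\geq\floor{x^\ast}}\frac{\vartheta_k}{k}\rho_n^k$ is also exactly how the paper absorbs the sub-leading terms (cf.\ Remark~\ref{rem:order_E}). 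In short: a correct sketch that reproduces the paper's argument, with a more synthetic derivation of the quadratic term via the displaced saddle.
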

Obviously, Proposition~\ref{prop:relies2} immediately implies Theorem~\ref{thm:limit_shape_saddle}. Moreover, knowing the behaviour 
of the Laplace transform enables us to compute the asymptotics of the Young diagram in the limit. 
 More precisely, we now can proof the large deviation estimates in Prop.~\ref{prop:forall}.
\begin{proof}[Proof of Prop.~\ref{prop:forall}]
The strategy we adopt was first exploited in \cite[Theorem 4.1]{MaNiZe11}. Specifically, let $\sigma_n$ be the limit variance as in Thm.~\ref{thm:limit_shape_saddle}. 
Define the normalized log-moment generating function as \eqref{eq:cum_generating_func}. One bounds then the probability 
$\mathbb P\left(|\widetilde w_n(x^*)-a|\leq\epsilon\right)$ by a random variable $Y$ of mean $a$ and using the second moment method.
We omit the details since the computations are almost the same as in \cite{MaNiZe11}.
\end{proof}
We can also determine the behaviour of the increments of the function $w_n(\cdot)$. 
\begin{theorem}
\label{thm:beh_increments_saddle}
For $\ell\geq 2$ and $x_\ell\geq x_{\ell-1}\geq \dots\geq x_1\geq 0$, let 
$$\widetilde{\mathbf{w}}_n^{\mathbf s}(\mathbf x)=\bigl(\widetilde w_n^{\mathbf s}(x_\ell),\,\widetilde w_n^{\mathbf s}(x_{\ell-1})-\widetilde w_n^{\mathbf s}(x_\ell),\,\ldots,\,\widetilde w_n^{\mathbf s}(x_1)-\widetilde w_n^{\mathbf s}(x_2)\bigr).$$ 
Set $x_{\ell+1}=+\infty$. For $1\leq j<i<\ell$ we have that 
\begin{align}
\widetilde w_\infty^{\mathbf s}(x_i,\,x_j)&:=\lim_{n\to+\infty}\mathrm{Cov}\left(\widetilde w_n^{\mathbf s}(x_j)-\widetilde w_n^{\mathbf s}(x_{j+1}),\,\widetilde w_n^{\mathbf s}(x_i)-\widetilde w_n^{\mathbf s}(x_{i+1})\right)\\
&=\frac{\left(\Gamma(\alpha+1,\,x_i)-\Gamma(\alpha+1,\,x_{i+1})\right)\left(\Gamma(\alpha+1,\,x_{j})-\Gamma(\alpha+1,\,x_{j+1})\right)}{\Gamma(\alpha+1)\Gamma(\alpha+2)}.
\nonumber
\end{align}
\end{theorem}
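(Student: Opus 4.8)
The plan is to recover the whole limiting covariance matrix of the increment vector $\widetilde{\mathbf w}_n^{\mathbf s}(\mathbf x)$ at once, by computing the asymptotics of its joint Laplace transform and showing it converges to the exponential of a quadratic form; the off‑diagonal entries of that form are then exactly $\widetilde w_\infty^{\mathbf s}(x_i,x_j)$. First I would apply Lemma~\ref{lem:generating_w_n_x_y} with the points $x_1 n^\ast\le\dots\le x_\ell n^\ast$ (see \eqref{eq:def_n*}) and the rescaled parameters $s_j^\ast:=s_j(\overline n)^{-1/2}$: writing $F_j(t):=\sum_{\floor{x_j n^\ast}\le k<\floor{x_{j+1}n^\ast}}\frac{\vth_k}{k}t^k$ for the block attached to the $j$‑th increment (with $x_{\ell+1}=+\infty$), this yields $\En{\exp(-\langle \mathbf s^\ast,\mathbf w_n(\mathbf x n^\ast)\rangle)}=h_n^{-1}[t^n]\exp(g_n(t))$ with $g_n(t):=g_\Theta(t)+\sum_{j=1}^\ell(e^{-s_j^\ast}-1)F_j(t)$; here, per \eqref{eq:increm}, $\mathbf w_n(\mathbf x n^\ast)$ is the vector of increments. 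Every coefficient of $g_n$ equals $\frac{\vth_k}{k}e^{-s_j^\ast}>0$ and $e^{-s_j^\ast}\to1$, so $g_n$ is a mild multiplicative perturbation of $g_\Theta$; by Remark~\ref{rem:s_positive} it is enough to treat $\mathbf s\ge\mathbf 0$. Subtracting the deterministic centrings $\overline n\bigl(w_\infty^{\mathbf s}(x_j)-w_\infty^{\mathbf s}(x_{j+1})+z_n^{\mathbf s}(x_j)-z_n^{\mathbf s}(x_{j+1})\bigr)=:\overline n\,\mu_{n,j}$ of the coordinates, the task reduces to proving that $\sum_j s_j(\overline n)^{1/2}\mu_{n,j}+\log\bigl(h_n^{-1}[t^n]e^{g_n(t)}\bigr)\to\tfrac12\,\mathbf s^\top\Sigma\,\mathbf s$ for each fixed $\mathbf s$; Lévy continuity then gives joint convergence of $\widetilde{\mathbf w}_n^{\mathbf s}(\mathbf x)$, hence of the covariances to the entries of $\Sigma$.

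The analytic engine is that $(g_n)_n$ is $\log$‑$n$‑admissible, uniformly for $\mathbf s$ in a compact set — the same verification already needed for the single‑block perturbation underlying Proposition~\ref{prop:relies2} and Theorem~\ref{thm:cumulants}. Concretely one checks the saddle‑point equation $a_n(r_n)=n$, the quadratic expansion \eqref{eq_admissible_expansion}, and the major‑arc monotonicity \eqref{eq:monotonicity}; all three transfer from $g_\Theta$ because $g_n-g_\Theta$ is a finite sum of power series with non‑negative coefficients supported on \emph{disjoint} blocks of exponents. Then Theorem~\ref{thm:generalasymptotic} applies to both $[t^n]e^{g_n(t)}$ and $h_n=[t^n]e^{g_\Theta(t)}$, and since the Gaussian prefactors $b_n(r_n)^{-1/2}$ and $b_\Theta(\tilde r_n)^{-1/2}$ agree up to $1+o(1)$, one gets $\log\bigl(h_n^{-1}[t^n]e^{g_n}\bigr)=\Phi(\mathbf s)-\Phi(\mathbf 0)+o(1)$, where $\Phi(\mathbf s):=g_n(r_n)-n\log r_n$, $r_n=r_n(\mathbf s)$ solves $a_n(r_n)=n$, and $\tilde r_n:=r_n(\mathbf 0)=e^{-v_n}$ with $v_n$ as in \eqref{eq:def_t_random_2}.

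Next I would Taylor‑expand $\Phi$ in the variables $\epsilon_j:=e^{-s_j^\ast}-1$. Since $r_n$ is a critical point of $t\mapsto g_n(t)-n\log t$, the envelope property gives $\partial_{\epsilon_j}\Phi=F_j(r_n)$, and differentiating the saddle‑point relation $a_\Theta(r_n)+\sum_k\epsilon_k\,r_nF_k'(r_n)=n$ at $\mathbf 0$ gives the displacement $\partial_{\epsilon_j}r_n|_{\mathbf 0}=-\tilde r_n^{\,2}F_j'(\tilde r_n)/b_\Theta(\tilde r_n)$, with $a_\Theta,b_\Theta$ as in \eqref{eq:a_n_general_explicit}–\eqref{eq:b_n_general_explicit}. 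Hence to second order $\log\bigl(h_n^{-1}[t^n]e^{g_n}\bigr)=\sum_j\epsilon_jF_j(\tilde r_n)-\tfrac12\,\frac{\tilde r_n^{\,2}}{b_\Theta(\tilde r_n)}\bigl(\sum_j\epsilon_jF_j'(\tilde r_n)\bigr)^2+o(1)$, the remainder being controlled by the cubic bound in Definition~\ref{def:admissible} exactly as in \cite{MaNiZe11}. Writing $\epsilon_j=-s_j^\ast+\tfrac12(s_j^\ast)^2+O((s_j^\ast)^3)$, the linear‑in‑$\mathbf s$ contribution $-\sum_j s_j^\ast F_j(\tilde r_n)$ cancels $\sum_j s_j(\overline n)^{1/2}\mu_{n,j}$, because $F_j(\tilde r_n)=\En{w_n(x_j n^\ast)}-\En{w_n(x_{j+1}n^\ast)}+o((\overline n)^{1/2})$ and $\overline n\,\mu_{n,j}$ is, by construction, this expectation up to $o((\overline n)^{1/2})$ — this is precisely where the full asymptotics of the mean, i.e.\ the correction $z_n^{\mathbf s}$ of Remark~\ref{rem:order_E}, is indispensable. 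The surviving quadratic form is $\tfrac12\sum_j(s_j^\ast)^2F_j(\tilde r_n)-\tfrac12\,\frac{\tilde r_n^{\,2}}{b_\Theta(\tilde r_n)}\bigl(\sum_j s_j^\ast F_j'(\tilde r_n)\bigr)^2$; substituting $s_j^\ast=s_j(\overline n)^{-1/2}$ and the asymptotics furnished by Lemmas~\ref{lemma:polylog_asymp} and \ref{lemma:approxsums} — $F_j(\tilde r_n)\sim\overline n\,\frac{\Gamma(\alpha,x_j)-\Gamma(\alpha,x_{j+1})}{\Gamma(\alpha+1)}$, $\tilde r_nF_j'(\tilde r_n)=\sum_{k\in R_j}\vth_k\tilde r_n^k\sim n\,\frac{\Gamma(\alpha+1,x_j)-\Gamma(\alpha+1,x_{j+1})}{\Gamma(\alpha+1)}$, and $b_\Theta(\tilde r_n)=\sum_k k\vth_k\tilde r_n^k\sim(\alpha+1)\,n\,n^\ast$, together with $n^\ast\overline n=n$ — makes all powers of $n,n^\ast,\overline n$ and all $(\log n)^j$ factors (which enter through the operators $(\tfrac{\partial}{\partial\delta}\pm\log(\cdot))^j$ of Lemmas~\ref{lemma:polylog_asymp}–\ref{lemma:approxsums}) cancel, leaving $\tfrac12\,\mathbf s^\top\Sigma\,\mathbf s$ whose off‑diagonal entries are the stated $\widetilde w_\infty^{\mathbf s}(x_i,x_j)$. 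The term $\tilde r_n^{\,2}F_i'F_j'/b_\Theta$ is the source of this non‑trivial, rank‑one‑type dependence: it encodes the global constraint $\sum_k kC_k=n$, which is exactly the feature that made the increments independent under $\mathbb P_t$ in Proposition~\ref{prop:marignals_randimised} but not under $\mathbb P_n$.

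The hard part is twofold. First, the \emph{uniform} $\log$‑$n$‑admissibility of $(g_n)_n$: the only genuinely delicate item is the major‑arc monotonicity \eqref{eq:monotonicity}, though it follows from the one‑parameter estimates because, block by block, one only multiplies non‑negative Fourier coefficients by a positive constant close to $1$. Second, the bookkeeping of subleading terms: one must verify that the intermediate‑order corrections to $\En{w_n(x_j n^\ast)}$ — which, as stressed in Remark~\ref{rem:order_E}, need not be $o((\overline n)^{1/2})$ — are exactly absorbed into $\overline n\,z_n^{\mathbf s}$ so that the linear terms cancel, and one must carry the expansions of Lemmas~\ref{lemma:polylog_asymp}–\ref{lemma:approxsums} (including the $\widetilde q_n^{\,k}$‑ and $\zeta$‑type corrections) far enough to be certain the leading constants assemble into the claimed closed form. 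The cubic‑in‑$\mathbf s$ terms and the error $R_n(r_n,\varphi)$ only contribute $O((\overline n)^{-1/2})$ and are harmless.
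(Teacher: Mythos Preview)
Your approach is correct and essentially the same as the paper's: both apply Lemma~\ref{lem:generating_w_n_x_y}, verify $\log$-$n$-admissibility of the multi-block perturbation of $g_\Theta$, and extract the covariance as the cross term in the quadratic part of the saddle-point expansion. The only cosmetic difference is that the paper writes down an explicit approximate saddle $v_n=p_n\bigl(1-\tfrac{q_n}{\Gamma(\alpha+2)}L(\mathbf s)\bigr)$ with $L$ linear in the $s_j$ and reads the cross terms off the square $L(\mathbf s)^2$ in the expansion of $g_\Theta(r_n)$, whereas you keep the saddle implicit and compute the same Hessian via the envelope theorem and implicit differentiation; your rank-one term $\tilde r_n^{\,2}F_i'F_j'/b_\Theta$ is exactly the paper's square.
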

\begin{remark}
 Let us comment briefly on Thm.~\ref{thm:beh_increments_saddle}. What we obtained in this result is most unexpected: 
cycle counts are asymptotically independent under very mild assumptions (see Lemma~\ref{lemma:BetzUel}). 
The assumption of the lemma holds in our case as the growth of the parameters $\vth_n$ is algebraic.
%\begin{eqnarray*}
%&& \vth_n/n={\alpha+n-1 \choose n}\Rightarrow \vth_n\approx\frac{\sqrt{2 \pi (\alpha+n-1)}\left(\frac{\alpha+n-1}{e}\right)^{\alpha+n-1}}{\sqrt{2 \pi n}\left(\frac{n-1}{e}\right)^{n-1}}=\\
%&&=\O{(\alpha+n-1)^\alpha\left(1+\frac{\alpha}{n-1}\right)^{n-1}}=\O{n^\alpha}
%\end{eqnarray*}
The fact that the increments depend on disjoint sets of cycles would have 
suggested the asymptotic independence of $w_n(y^\ast)$ from $w_n(x^\ast)-w_n(y^\ast)$. We are aware of the work of \cite{BaMaZa07} handling this 
issue in the case of the Ewens 
sampling formula, in particular showing that partial 
sums of cycle counts need not converge to processes with independent increments. Our result extends this idea in the sense that it shows 
the explicit covariance matrix for 
a whole category of generating functions. It would be interesting to provide a heuristic explanation for this theorem.
\end{remark}
\subsubsection{\texorpdfstring{Log-$n$-admissibility}{Adm}}
In order to determine the limit shape we would like to prove the 
log-$n$-admissibility of the function explicited in \eqref{eq:generating_w_n}. 
To be more precise, what we have to prove is

\begin{lemma}\label{lemma:funct_admiss}
Let $\vth_k$ be as in \eqref{eq:weights},  $s,x\geq 0$ and $x^\ast=xn^\ast$, $s^\ast=s (\overline n)^{1/2}$ with $n^\ast,\overline n$ as in \eqref{eq:def_n*}. Then 
$$
g_\Theta(t) + (e^{-s^\ast}-1)\sum_{k=\floor{x^\ast} }^\infty \frac{\theta_k}{k}t^k 
$$
is log-$n$-admissible for all $x,s\geq 0$ with 
\begin{align}
\label{eq:r_n_adm} 
 r_n:=\exp{-v_n}, \,  v_n=p_n(1-sq_n) 
%  \ \text{ for }\
 \end{align}
 for some 
 \begin{align}
 \label{eq:p_n_adm} 
 p_n \sim (n^\ast)^{-1} 
 \ \text{ and  }\ 
 q_n \sim (n^\ast)^{-1/2} \frac{\Gamma(\alpha+1,\,x)}{\Gamma(\alpha+2)}.
\end{align}
\end{lemma}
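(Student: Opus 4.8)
The plan is to verify, clause by clause, the five conditions of Definition~\ref{def:admissible} for the family
$g_n(t):=g_\Theta(t)+(e^{-s^\ast}-1)\sum_{k\geq\floor{x^\ast}}\frac{\vth_k}{k}t^k$, where throughout $s^\ast=s(\overline n)^{-1/2}$, so that $e^{-s^\ast}-1=-s^\ast+\O{(s^\ast)^2}=\O{(\overline n)^{-1/2}}$ is small. Rewriting $g_n(t)=\sum_{k<\floor{x^\ast}}\frac{\vth_k}{k}t^k+e^{-s^\ast}\sum_{k\geq\floor{x^\ast}}\frac{\vth_k}{k}t^k$ makes it manifest that all coefficients $g_{k,n}$ are non-negative and that the radius of convergence is $1$, so the framework of Definition~\ref{def:admissible} applies. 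By \eqref{eq:a_n_general_explicit}--\eqref{eq:b_n_general_explicit}, with $r=e^{-v}$, the functions $a_n(e^{-v})=\sum_{m\geq1}\vth_m e^{-mv}+(e^{-s^\ast}-1)\sum_{k\geq\floor{x^\ast}}\vth_k e^{-kv}$, $b_n(e^{-v})$, and the combination $rg_n'+3r^2g_n''+r^3g_n'''$ each split into a ``bulk'' full sum and a perturbation partial sum. I would evaluate the bulk sums with Lemma~\ref{lemma:polylog_asymp} (applied with exponent $\delta=\alpha$, resp.\ $\alpha+1$, $\alpha+2$), whose leading terms are $v^{-\alpha-1}(-\log v)^j$, $(\alpha+1)v^{-\alpha-2}(-\log v)^j$ and $(\alpha+1)(\alpha+2)v^{-\alpha-3}(-\log v)^j$; and the perturbation partial sums (for $x>0$) with Lemma~\ref{lemma:approxsums}, taking $z_n=x^\ast=xn^\ast$ and $\widetilde q_n=n^\ast v-1$, whose leading term is $\frac{\Gamma(\alpha+1,x)}{\Gamma(\alpha+1)}$ times the corresponding bulk leading term (and is multiplied by $e^{-s^\ast}-1=\O{(\overline n)^{-1/2}}$, hence of lower order); the case $x=0$ is degenerate, with $g_n=e^{-s^\ast}g_\Theta$ and only Lemma~\ref{lemma:polylog_asymp} needed.

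The heart of the matter is the Saddle-point clause. I would first let $p_n$ be the leading-order solution of the unperturbed equation $\sum_{m\geq1}\vth_m e^{-mp_n}=n$; this is precisely the $v_n$ of \eqref{eq:def_t_random_2}, so $p_n\sim(n^\ast)^{-1}$ and $a_n(e^{-p_n})=n(1+o(1))$. Then I would put $v_n=p_n(1-sq_n)$ and impose $a_n(e^{-v_n})=n$, expanding to first order in the small quantity $sq_n$. Since $\frac{d}{dv}v^{-\alpha-1}=-(\alpha+1)v^{-\alpha-2}$ and the $(-\log v)^j$-factor only contributes a relative correction of size $\O{1/\log n}$, perturbing the bulk sum produces a term $(\alpha+1)\,sq_n\,n\,(1+o(1))$, which must cancel the perturbation term $(e^{-s^\ast}-1)\frac{\Gamma(\alpha+1,x)}{\Gamma(\alpha+1)}n(1+o(1))=-s(\overline n)^{-1/2}\frac{\Gamma(\alpha+1,x)}{\Gamma(\alpha+1)}n(1+o(1))$; solving yields $q_n\sim(\overline n)^{-1/2}\frac{\Gamma(\alpha+1,x)}{(\alpha+1)\Gamma(\alpha+1)}=(\overline n)^{-1/2}\frac{\Gamma(\alpha+1,x)}{\Gamma(\alpha+2)}$, of the form asserted in \eqref{eq:r_n_adm}--\eqref{eq:p_n_adm}. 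Since it is enough to solve the saddle-point equation in the relaxed sense \eqref{eq:approximate_saddle_equation}, the sub-leading terms produced by the two lemmas --- including the $\O{v^{-\beta-1}}$ coming from the $\O{m^\beta}$-part of $\vth_m$, which is $\o{\sqrt{b_n}}$ because $\beta<\alpha/2$ --- are harmless here.

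It remains to settle the other three clauses. From the asymptotics above, $b_n(r_n)\asymp v_n^{-\alpha-2}(-\log v_n)^j\asymp n\,n^\ast\to\infty$ and $r_ng_n'(r_n)+3r_n^2g_n''(r_n)+r_n^3g_n'''(r_n)\asymp n\,(n^\ast)^2$. Choosing $\delta_n:=(n\,n^\ast)^{-1/2}(\overline n)^{\eta}$ for a small fixed $\eta\in(0,1/6)$, one checks $\delta_n\to0$, that $\delta_n^2 b_n(r_n)\asymp(\overline n)^{2\eta}\to\infty$ while $\log b_n(r_n)=\O{\log n}$ (Divergence and Width of convergence), and that $R_n(r_n,\varphi)=\varphi^3\,\O{n(n^\ast)^2}$ is $\o{\varphi^3\delta_n^{-3}}$ because $\delta_n^3\,n(n^\ast)^2\asymp(\overline n)^{3\eta-1/2}\to0$ (Approximation). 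For Monotonicity I would run the usual Hayman-type argument: since the $g_{k,n}$ are non-negative and the mass of $\sum_m\vth_m r_n^m$ is carried by indices $m\asymp1/v_n\asymp n^\ast$, for $\delta_n<\abs{\varphi}\leq\pi$ the deficit $g_n(r_n)-\Re{g_n(r_ne^{i\varphi})}=\sum_kg_{k,n}r_n^k(1-\cos k\varphi)$ dominates the deficit $g_n(r_n)-\Re{g_n(r_ne^{\pm i\delta_n})}\sim\tfrac12\delta_n^2 b_n(r_n)$ at $\varphi=\pm\delta_n$ --- for $\abs{\varphi}$ just above $\delta_n$ by the quadratic lower bound $\geq c\,\varphi^2 b_n(r_n)$ for some $c>0$, which is increasing in $\abs{\varphi}$, and over the remainder of the range by the fact that $v_n^{-\alpha-1}(-\log v_n)^j$ grows like a positive power of $n$. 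I expect this monotonicity estimate, together with the careful bookkeeping of constants and sub-leading terms supplied by Lemmas~\ref{lemma:polylog_asymp} and \ref{lemma:approxsums}, to be the main technical obstacle; the perturbative determination of $p_n$ and $q_n$ and the asymptotics of $a_n$ and $b_n$ are then routine.
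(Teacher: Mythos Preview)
Your approach is essentially the paper's own: you verify Definition~\ref{def:admissible} clause by clause, you evaluate the bulk sums $\sum_m \vth_m e^{-mv}$, $\sum_m m\vth_m e^{-mv}$, etc., with Lemma~\ref{lemma:polylog_asymp} and the tail sums with Lemma~\ref{lemma:approxsums}, and you determine $r_n=e^{-p_n(1-sq_n)}$ perturbatively by first solving the unperturbed equation for $p_n$ and then fixing $q_n$ so that the order-$(\overline n)^{-1/2}$ correction from the tail term cancels. Your identification $q_n\sim(\overline n)^{-1/2}\Gamma(\alpha+1,x)/\Gamma(\alpha+2)$ matches exactly the paper's explicit formula \eqref{eq:qn_precise} (note that $(np_n)^{-1/2}\sim(\overline n)^{-1/2}$). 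The paper's choice $\delta_n=(n^\ast)^{-\xi}$ with $\xi\in\bigl(\tfrac{\alpha+3}{3},\tfrac{\alpha+2}{2}\bigr)$ and your choice $\delta_n=(nn^\ast)^{-1/2}(\overline n)^\eta$ with $\eta\in(0,1/6)$ are equivalent parameterisations of the same window.

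The one methodological difference is the \textbf{Monotonicity} clause. The paper does not run a Hayman-type deficit argument on $\sum_k g_{k,n}r_n^k(1-\cos k\varphi)$; instead it observes that monotonicity for the bulk $g_\Theta$ alone is already controlled and devotes its three-case analysis ($\varphi=o(v_n)$; $\varphi\neq o(v_n)$ but $\varphi=o(1)$; $|\varphi|>C$) to showing directly, via the integral approximation and the bound $\Gamma(\alpha,x+iy)=\O{y^{\alpha-1}}$, that the perturbation term $(e^{-s^\ast}-1)\sum_{k\geq x^\ast}\tfrac{\vth_k}{k}r_n^k e^{ik\varphi}$ is negligible next to $\Re{g_\Theta(r_ne^{\pm i\delta_n})}$ in each regime. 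Your global positivity argument is also sound --- the coefficients $g_{k,n}$ are non-negative, the quadratic lower bound covers $\delta_n<|\varphi|\lesssim v_n$, and for $|\varphi|\gtrsim v_n$ the deficit is a positive fraction of $g_n(r_n)\asymp\overline n$, which dominates $\delta_n^2 b_n\asymp(\overline n)^{2\eta}$ --- but, as you yourself flag, filling in the transition between the two ranges and handling $|\varphi|$ near $\pi$ requires the same integral estimates the paper uses. In short: same proof, slightly different packaging of the monotonicity step.
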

Let us consider $p_n$ and $q_n$ in the case $\vth_k =\frac{(\log k )^jk^\alpha}{\Gamma(\alpha+1)}$ more explicitly. In this situation, $p_n$ is the solution of the equation
\begin{align}
\label{eq:p_n_special}
 \sum_{k=1}^\infty  \frac{(\log k)^j k^\alpha}{\Gamma(\alpha+1)} e^{-kp_n} =n.
%  e^{-\frac{\overline n}{n}}g^\prime_\Theta(e^{-p_n}) =n,
\end{align}
Using Lemma~\ref{lemma:polylog_asymp}, one can easily show that  $p_n \sim (n^\ast)^{-1}$. Furthermore, we have
 \begin{align}
 \label{eq:qn_precise}
  q_n
 =
\frac{(np_n)^{-1/2}\left(\frac{\partial}{\partial \alpha} -\log p_n\right)^j\Gamma(\alpha+1,\,x)}
{(\alpha+1)\left(\frac{\partial}{\partial \delta} -\log p_n\right)^{j}\Gamma(\alpha+1) +j\left(\frac{\partial}{\partial \delta} -\log p_n\right)^{j-1}\Gamma(\alpha+1)}. 
 \end{align}
The lower order terms of $p_n$ and $q_n$ in \eqref{eq:p_n_special} and \eqref{eq:qn_precise} are important for the saddle point solution 
in the sense that the condition $a_n(r_n) = n +\o{\sqrt{b_n}}$ is fulfilled (see \eqref{eq:approximate_saddle_equation}).
However, for the computation of the limit shape, the fluctuations and the cumulants, we require only the leading term of $p_n$ and $q_n$.

We require in the proof of Lemma~\ref{lemma:funct_admiss} the observation 
 \begin{align}
 \label{eq:size_pn}
  p_n \sim  (n^*)^{-1}
  \text{ and } p_n^{-\alpha}(-\log p_n)^j\sim \overline n,
 \end{align}
which follows with a straightforward computation.
\begin{proof}[Proof of Lemma~\ref{lemma:funct_admiss}]
We now verify the conditions in Definition~\ref{def:admissible}. 

\begin{description}
\item[Saddle-point and approximation] \label{page}
We begin with the case $\beta=0$ and compute first the size of $b(r_n)$. 
We get with Lemma~\ref{lemma:polylog_asymp}, \eqref{eq:b_n_general_explicit} and \eqref{eq:size_pn}
 \begin{eqnarray}
b_n(r_n)
&=&
\sum_{k=1}^{+\infty}\frac{(\log k)^j k^{\alpha+1}}{\Gamma(\alpha+1)}\exp{-k v_n}+(\exp{-s^*}-1)\sum_{k=\floor{x^*}}^{+\infty}\frac{(\log k)^j k^{\alpha+1}}{\Gamma(\alpha+1)}\exp{-k v_n}\nonumber\\
&=&
\O{\sum_{k=1}^{+\infty}(\log k)^j k^{\alpha+1}\exp{-k v_n}}
=
\O{v_n^{-\alpha-2}\left(\log v_n\right)^j}.\label{eq:b_n_estimate}
\end{eqnarray} 
We thus have to show that $a_n(r_n) = n + \o{\left(v_n^{-\alpha-2}\left(\log v_n\right)^j\right)^{1/2}}$ to get a suitable saddle point solution, see \eqref{eq:approximate_saddle_equation}.
Using the asymptotic behaviour of $v_n$, we have to show $a_n(r_n) = n + \o{n^{\frac{\alpha+2}{2(\alpha+1)}}\left(\log n\right)^j}$.
We use \eqref{eq:a_n_general_explicit} and obtain
\begin{eqnarray}
a_n(r_n)
&=&
\sum_{k=1}^{+\infty}\frac{(\log k)^j k^\alpha}{\Gamma(\alpha+1)}\exp{-k v_n}+(\exp{-s^*}-1)\sum_{k=\floor{x^*}}^{+\infty}\frac{(\log k)^j k^\alpha}{\Gamma(\alpha+1)}\exp{-k v_n}.
\end{eqnarray} 
We begin with the first sum. We use Lemma~\ref{lemma:polylog_asymp} and get
\begin{eqnarray}
 & &\sum_{k=1}^{+\infty}\frac{(\log k)^j k^\alpha}{\Gamma(\alpha+1)}\exp{-k v_n}
 =
 \frac{v_n^{-\alpha-1}}{\Gamma(\alpha+1)}\left(\frac{\partial}{\partial \alpha}-\log v_n\right)^j\Gamma(\alpha+1)+\O{1} \\
 &=&
 \frac{\left(p_n\right)^{-\alpha-1}}{\Gamma(\alpha+1)(1-q_n)^{\alpha+1}}
%  \cdot\nonumber\\
%  &&
 \cdot \left(\frac{\partial}{\partial \alpha}-\log p_n +\log(1-q_n)\right)^j\Gamma(\alpha+1) +O(1). \nonumber
 \end{eqnarray} 
  We choose at this point $p_n$ to be the solution of 
 \begin{align}
  n=
  \frac{\left(p_n\right)^{-\alpha-1}}{\Gamma(\alpha+1)(1-q_n)^{\alpha+1}}
 \cdot \left(\frac{\partial}{\partial \alpha}-\log p_n \right)^j\Gamma(\alpha+1).
 \end{align}
A straightforward computation gives $p_n \sim (n^\ast)^{-1}$.  We now expand $(1-q_n)^{-\alpha-1}$ and $\log(1-q_n)$. This then gives 

 \begin{eqnarray}
  &&\frac{p_n^{-\alpha-1}}{\Gamma(\alpha+1)}\left(\frac{\partial}{\partial \alpha}-\log p_n\right)^j\Gamma(\alpha+1) \nonumber\\
  &&+
%   \label{eq:g_admissible_first_term}
   \frac{(\alpha+1) q_n \left(p_n\right)^{-\alpha-1}}{\Gamma(\alpha+1)}\left(\frac{\partial}{\partial \alpha}-\log \left(p_n\right)\right)^j\Gamma(\alpha+1) \nonumber\\
  &&+
  \frac{q_n p_n^{-\alpha-1}}{\Gamma(\alpha+1)}   j\left(\frac{\partial}{\partial \alpha}-\log p_n\right)^{j-1} \Gamma(\alpha+1)  +
  O\bigl( (\log p_n)^j p_n^{-\alpha-1} q_n^2  \bigr)\nonumber\\
&=&
\label{eq:saddle_comp_1}
n+
      \frac{(\alpha+1) q_n \left(p_n\right)^{-\alpha-1}}{\Gamma(\alpha+1)}\left(\frac{\partial}{\partial \alpha}-\log p_n\right)^j\Gamma(\alpha+1) \\
  &&+
  \frac{q_n \left(p_n\right)^{-\alpha-1}}{\Gamma(\alpha+1)}    j\left(\frac{\partial}{\partial \alpha}-\log \left(p_n\right)\right)^{j-1} \Gamma(\alpha+1)  +
  \O{n^{\frac{1}{1+\alpha}} (\log n)^{-\frac{j}{1+\alpha}}}.\nonumber
 \end{eqnarray}
Since $\frac{1}{1+\alpha}< \frac{2+\alpha}{2(1+\alpha)}$, we can ignore the big-$O$ term.
We now come to the second sum. We use Lemma~\ref{lemma:approxsums} with $z_n= xn^*$ and a similar estimate as in \eqref{eq:b_n_estimate} to obtain
\begin{align}
 &(\exp{-s^*}-1)\sum_{k=\floor{x^*}}^{+\infty}\frac{(\log k)^j k^\alpha}{\Gamma(\alpha+1)}\exp{-k v_n}\nonumber\\
 =&
 -s^\ast\sum_{k=\floor{x^*}}^{+\infty}\frac{(\log k)^j k^\alpha}{\Gamma(\alpha+1)}\exp{-k v_n} + \O{(s^\ast)^2 v_n^{-\alpha-1}\left(\log v_n\right)^j}
 \nonumber\\
 \label{eq:saddle_comp_2}
 =&
  -s^* \frac{\left(p_n\right)^{-\alpha-1}}{\Gamma(\alpha+1)}\left(\frac{\partial}{\partial \alpha} -\log \left(p_n\right) \right)^j \Gamma(\alpha+1,x)
+\O{n^{\frac{1}{1+\alpha}} (\log n)^{\frac{-j}{1+\alpha}}}.
\end{align}
We can ignore here also the big-$O$ term. 
We thus have to check that the remaining terms in \eqref{eq:saddle_comp_1} and \eqref{eq:saddle_comp_2}
combined give $a_n(r_n) = n + \o{\sqrt{b_n(r_n)}}$. This follows from the definition of $q_n$, see \eqref{eq:qn_precise}.

We now come to the case $\beta < \alpha/2$. We have
\begin{align}
&
\sum_{k=1}^{+\infty}\frac{(\log k)^j k^{\beta}}{\Gamma(\alpha+1)}\exp{-k v_n} + (\exp{-s^*}-1)\sum_{k=\floor{x^*}}^{+\infty}\frac{(\log k)^j k^{\beta}}{\Gamma(\alpha+1)}\exp{-k v_n}\nonumber\\
=&
\, \O{ \sum_{k=1}^{+\infty}\frac{(\log k)^j k^{\beta}}{\Gamma(\alpha+1)}\exp{-k v_n} }
=
\O{n^{\frac{\beta+1}{\alpha+1}} (\log n)^{\frac{1}{\alpha+1}}}
\end{align}
Since $\beta < \alpha/2$, we obtain again $a_n(r_n) = n + \o{\sqrt{b_n(r_n)}}$. This completes the proof of this point.
For completeness, we give in Remark~\ref{rem:general_weights} some hints how to adjust this proof to the case $\vth_m\sim (\log m)^j m^\alpha$.

\item[Divergence] By the above calculations we set $\delta_n:=(n^*)^{-\xi}$ with $\frac{\alpha+3}{3}<\xi<\frac{\alpha+2}{2}$. This position holds also in the case $\beta>0$.
\item[Monotonicity] 
In the region $\abs{\varphi}>\delta_n$ we wish to show \eqref{eq:monotonicity}. We distinguish between the cases $\varphi=\o{v_n}$, $\varphi\neq \o{v_n}$ and $\abs{\varphi}=\o{1}$, and finally $\abs{\varphi}>C$.
First remember that $g_n\left(r_n \exp{\pm \i \delta_n}\right)=\O{\left(\log n^*\right)^j(n^*)^\alpha}$ by Lemma \ref{lemma:polylog_asymp}. Thus here we have:
\begin{enumerate}
 \item if $\varphi=\o{v_n}$, then by a change of variable $t\leadsto (v_n-\i\varphi)t$
 \begin{eqnarray*}
 && \sum_{k\geq \floor{x^*}}\frac{(\log k)^jk^{\alpha-1}}{\Gamma(\alpha+1)} \exp{-k(v_n-\i \varphi)}\\
 &&\sim\frac{(v_n-\i \varphi)^{-\alpha}}{\Gamma(\alpha+1)} \int_{x}^{+\infty} (\log t)^j t^{\alpha-1}\exp{-t}\mathrm d t\\
&&\sim\left(\frac{\partial}{\partial \alpha}+\log n^*\right)^j\frac{\Gamma(\alpha,x)}{\Gamma(\alpha+1)} (v_n-\i \varphi)^{-\alpha}.
 \end{eqnarray*}
Considering the factor $\exp{-s^*}-1$ we obtain that the summand is negligible with respect to $\Re{g(r_n e^{\pm \i\delta_n})}$.
 %and for $n$ large $\Re{(v_n-i \varphi)^{-\alpha}}\leq \Re{ (v_n\pm i \delta_n)^{-\alpha}}$.
 \item If $\varphi\neq \o{v_n}$ but $\abs{\varphi}=\o{1}$, then
 \begin{eqnarray}
 && \sum_{k\geq \floor{x^*}}\frac{ (\log k)^jk^{\alpha-1}}{\Gamma(\alpha+1)} \exp{-k(v_n-\i \varphi)}\nonumber\\
 && \sim\frac{(v_n-\i \varphi)^{-\alpha}}{\Gamma(\alpha+1)} \int_{x-\i x\varphi n^*+\o{1}}^{+\infty} (\log k)^j t^{\alpha-1}\exp{-t}\mathrm d t\nonumber\\
&&\sim\left(\frac{\partial}{\partial \alpha}+\log n^*\right)^j\frac{\Gamma(\alpha,x-\i x\varphi n^*)}{\Gamma(\alpha+1)} 
  (v_n-\i \varphi)^{-\alpha}\label{eq:case_little_o}
 \end{eqnarray}
 and afterwards use the fact that $\Gamma(\alpha,x+\i y)=\O{y^{\alpha-1}}$ for $\abs{y}$ large. Hence the RHS of \eqref{eq:case_little_o} becomes
 $$
 \O{(n^*)^{\alpha-1}}
  (v_n-\i \varphi)^{-\alpha}=\O{(n^*)^{\alpha-1}\varphi^{-1}}
 $$
 As $\varphi\neq \o{v_n}$, we obtain that $\O{(n^*)^{\alpha-1}\varphi^{-1}}=\O{(n^*)^\alpha}\o{1}$ which is enough to show \eqref{eq:monotonicity} in this region.
\item To conclude we consider the case $\abs{\varphi}>C$: the function $g_n\left(r_n \exp{\i \varphi}\right)$ is bounded there by a constant uniform in $n$, 
and then by bounding $g_n\left(r_n \exp{\i \varphi}\right)$ through its modulus we have
        \begin{align}
            \Re{g_n(r_n e^{\i\varphi})} \leq \Re{g(r_n e^{\pm \i\delta_n})}\left(1+\O{(n^*)^{-\alpha/2}}\right).
        \end{align} 
\end{enumerate}
\end{description}
\end{proof}
 \begin{remark}
 \label{rem:general_weights}
One can use a similar argumentation as in the proof of Lemma~\ref{lemma:funct_admiss} for the more general weights  $\vth_m\sim (\log m)^j m^\alpha$.
For this has to replace $p_n$ to be the solution of 
\begin{align}
 n = \sum_{k=1}^\infty \theta_k e^{-kp_n}.
\end{align}
In other words, $e^{-p_n}$ is the saddle point solution for the function $g_\theta(t)$. 
Furthermore it is strait forward to see that the computations in \eqref{eq:saddle_comp_1} and \eqref{eq:saddle_comp_2} are similar and that one has to choose
$q_n$ to be the solution of the equation
 \begin{align*}
 &q_n \left(p_n\right)^{-\alpha-1}\left( \left(\frac{\partial}{\partial \alpha}-\log p_n\right)^j+ j\left(\frac{\partial}{\partial \alpha}-\log \left(p_n\right)\right)^{j-1}  \right)\Gamma(\alpha+1)\\
 =&
  -s^\ast\sum_{k=\floor{x^*}}^{+\infty}\theta_k\exp{-k p_n(1-sq_n)}.
 \end{align*}
\end{remark}
In order to show Thms.~\ref{thm:limit_shape_saddle}, \ref{thm:cumulants} and \ref{thm:beh_increments_saddle} we need to prove first an auxiliary proposition.
\begin{proposition}\label{prop:approx_scaling}
Let $v_n$, $\overline n$, $n^*$, $x^*$ and $s^*$ be as in Lemma~\ref{lemma:funct_admiss}. 
We then have as $n\to +\infty$
% \begin{align}
% &
% (\exp{-s^*}-1)\sum_{k\geq\lfloor x^* \rfloor} k^{\alpha-1} r_n^k\nonumber\\
% & = \left(-s (n^*)^{\alpha/2} \Gamma(\alpha,x )+\frac{s^2}{2}\Gamma(\alpha,x )-\frac{\Gamma(\alpha+1,x  )^2}{\Gamma(\alpha+2)}s^2 \right)+\o{1}\label{eq:expansion_sum}.
% \end{align}
% %
% Furthermore
\begin{align}\label{eq:expansion_sum}
&
(\exp{-s^*}-1)\sum_{k\geq\lfloor x^* \rfloor} \frac{(\log k)^j k^{\alpha-1} }{\Gamma(\alpha+1)}\exp{-kv_n}\\
& = \left(-s (\overline n)^{\frac{1}{2}}\frac{\Gamma(\alpha,x )}{\Gamma(\alpha+1)}(1+\o{1})
  +\frac{s^2}{2}\frac{\Gamma(\alpha,x )}{\Gamma(\alpha+1)}-\frac{\Gamma(\alpha+1,x  )^2}{\Gamma(\alpha+1)\Gamma(\alpha+2)}s^2 \right)+\o{1}.\nonumber
\end{align}
Furthermore, we have
\begin{align}
\label{eq:s^m_second_term}
&[s^m]\left[
(\exp{-s^*}-1)\sum_{k\geq\lfloor x^* \rfloor} \frac{(\log k)^j k^{\alpha-1} }{\Gamma(\alpha+1)}\exp{-kv_n}\right]\\
\sim&\,
(\overline n)^{1-\frac{m}{2}}[s^m]\left[
(\exp{-s}-1)\sum_{k=0}^\infty  \frac{s^k}{k!}\frac{\Gamma(\alpha+k,x)}{\Gamma(\alpha+1)} \left(-\frac{\Gamma(\alpha+1,\,x)}{\Gamma(\alpha+2)}\right)^k
\right].\nonumber
\end{align}

\end{proposition}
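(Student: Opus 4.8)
The strategy is to substitute the saddle point $r_n = e^{-v_n}$, $v_n = p_n(1-sq_n)$ of Lemma~\ref{lemma:funct_admiss} into the partial sum and reduce everything to Lemma~\ref{lemma:approxsums}. Apply that lemma with $\delta = \alpha-1$, lower index $z_n = x^\ast = xn^\ast$ and exponential rate $v_n$; its auxiliary parameter is then fixed by $x^\ast v_n = x(1+\widetilde q_n)$, that is
\[
\widetilde q_n = n^\ast p_n(1-sq_n) - 1 .
\]
Since $p_n \sim (n^\ast)^{-1}$ we have $n^\ast p_n \to 1$, and because the discrepancy $n^\ast p_n - 1$ is of smaller order than $q_n$, one gets $\widetilde q_n \to 0$ with $-\widetilde q_n = s q_n(1+\o{1}) + \o{(\overline n)^{-1/2}}$. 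Taking the truncation parameter $\ell$ large (for \eqref{eq:s^m_second_term} one needs $\ell \geq m$) and, where required, running the Euler--Maclaurin expansion with more derivatives as announced in Remark~\ref{rem:order_E}, the remainder in Lemma~\ref{lemma:approxsums} can be made $\o{(\overline n)^{-1/2}}$.

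Next I would simplify the main term. The operator $\left(\frac{\partial}{\partial\delta}+\log n^\ast\right)^j$ applied to $\Gamma(\delta+k+1,x)$ and evaluated at $\delta=\alpha-1$ has leading part $(\log n^\ast)^j\Gamma(\alpha+k,x)$, and every further term carries at least one fewer power of $\log n^\ast$. Combining the prefactor $(n^\ast)^\alpha$ with $(\log n^\ast)^j$ and using \eqref{eq:size_pn} (equivalently the defining relation \eqref{eq:p_n_special} for $p_n$), namely $(n^\ast)^\alpha(\log n^\ast)^j \sim \overline n$, the partial sum becomes
\[
\frac{1}{\Gamma(\alpha+1)}\sum_{k\geq\lfloor x^\ast\rfloor}(\log k)^j k^{\alpha-1}e^{-kv_n}
=
\frac{\overline n}{\Gamma(\alpha+1)}\sum_{k=0}^{\ell}\frac{\Gamma(\alpha+k,x)}{k!}(-\widetilde q_n)^k\,(1+\o{1}) + \o{(\overline n)^{1/2}},
\]
where the error absorbs both the lower-order-in-$\log n^\ast$ pieces of the operator and the remainder from Lemma~\ref{lemma:approxsums}. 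Now substitute $-\widetilde q_n = sq_n + \cdots$ together with $q_n \sim (\overline n)^{-1/2}\frac{\Gamma(\alpha+1,x)}{\Gamma(\alpha+2)}$ read off from \eqref{eq:qn_precise}; the $k$-th summand is then of order $\overline n\cdot s^k(\overline n)^{-k/2} = s^k(\overline n)^{1-k/2}$, with leading coefficient $\frac{1}{\Gamma(\alpha+1)}\frac{\Gamma(\alpha+k,x)}{k!}\bigl(\frac{\Gamma(\alpha+1,x)}{\Gamma(\alpha+2)}\bigr)^k$.

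It remains to multiply by $e^{-s^\ast}-1 = \sum_{a\geq1}\frac{(-1)^a}{a!}(\overline n)^{-a/2}s^a$, with $s^\ast = s(\overline n)^{-1/2}$, so that a product of an order-$s^a(\overline n)^{-a/2}$ term and an order-$s^k(\overline n)^{1-k/2}$ term has order $s^{a+k}(\overline n)^{1-(a+k)/2}$. For \eqref{eq:expansion_sum} one keeps the contributions with $a+k\leq 2$: the pair $(a,k)=(1,0)$ yields $-s(\overline n)^{1/2}\frac{\Gamma(\alpha,x)}{\Gamma(\alpha+1)}(1+\o{1})$, the pair $(2,0)$ yields $\frac{s^2}{2}\frac{\Gamma(\alpha,x)}{\Gamma(\alpha+1)}$, the pair $(1,1)$ yields $-s^2\frac{\Gamma(\alpha+1,x)^2}{\Gamma(\alpha+1)\Gamma(\alpha+2)}$, and every pair with $a+k\geq3$ together with the accumulated errors is $\o{1}$. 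For \eqref{eq:s^m_second_term} one extracts $[s^m]$ instead: this is the finite sum over $a+k=m$, $a\geq1$, of the $(a,k)$-contributions, whose leading order is $(\overline n)^{1-m/2}$ times a fixed combination of the quantities $\frac{\Gamma(\alpha+k,x)}{\Gamma(\alpha+1)}$ and $\frac{\Gamma(\alpha+1,x)}{\Gamma(\alpha+2)}$; one checks this combination is precisely the $m$-th Taylor coefficient of the function on the right-hand side of \eqref{eq:s^m_second_term}, the remaining contributions being of strictly smaller order in $\overline n$.

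The argument is conceptually light; the real work is the uniform control of the errors. The one genuinely delicate point is that the remainder supplied by Lemma~\ref{lemma:approxsums} as stated is only $\O{(n^\ast)^{\alpha-1}(\log n^\ast)^j}$, and after multiplication by $e^{-s^\ast}-1$, which is of order $(\overline n)^{-1/2}$, this becomes of order $(\overline n)^{1/2}/n^\ast$ --- not $\o{1}$ once $\alpha$ is large. One must therefore rerun the Euler--Maclaurin summation with enough derivatives (Remark~\ref{rem:order_E}) to push it below $\o{(\overline n)^{-1/2}}$, while simultaneously ensuring that the subleading-in-$\log n^\ast$ terms produced by $\left(\frac{\partial}{\partial\delta}+\log n^\ast\right)^j$ perturb only the $(1+\o{1})$ factor on the leading $(\overline n)^{1/2}$-term and are $\o{1}$ elsewhere. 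The replacement of $\widetilde q_n$ by $sq_n$ and the absorption of the saddle-point discrepancy $n^\ast p_n-1$ are routine, but must be carried out with the same bookkeeping care.
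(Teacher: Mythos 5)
Your proposal is correct and follows essentially the same route as the paper: apply Lemma~\ref{lemma:approxsums} with $\delta=\alpha-1$ and $z_n=x^*$, absorb the operator $\bigl(\frac{\partial}{\partial\delta}+\log n^*\bigr)^j$ using $(n^*)^\alpha(\log n^*)^j\sim\overline n$, and form the Cauchy product with $e^{-s^*}-1$; you are in fact a little more careful than the paper, tracking the saddle-point discrepancy $n^*p_n-1$ explicitly rather than tacitly identifying $p_n$ with $(n^*)^{-1}$, and you correctly flag (in the spirit of Remark~\ref{rem:order_E}) that the error term of Lemma~\ref{lemma:approxsums} as stated is $\O{\overline n/n^*}$, which after multiplication by $e^{-s^*}-1$ is not $\o{1}$ once $\alpha>2$, so a higher-order Euler--Maclaurin expansion is genuinely required. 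One caveat: the concluding assertion that your coefficient collection ``is precisely the $m$-th Taylor coefficient of the function on the right-hand side of \eqref{eq:s^m_second_term}'' should not be waved through, because with $\widetilde q_n\approx -sq_n$ and $q_n>0$ the expansion from Lemma~\ref{lemma:approxsums} carries $(-\widetilde q_n)^k\approx (sq_n)^k$ without a $(-1)^k$ factor; the $[s^m]$-coefficient you produce agrees with the $s^2$ term of \eqref{eq:expansion_sum}, but differs by a sign from the $\bigl(-\Gamma(\alpha+1,x)/\Gamma(\alpha+2)\bigr)^k$ written inside \eqref{eq:s^m_second_term}. That discrepancy is a typo in the paper (the paper's own proof sets $\widetilde q_n=sq_n$, contradicting the definition in Lemma~\ref{lemma:approxsums}, so the two sign slips cancel there), and your derivation gives the correct sign; just state the mismatch rather than asserting agreement.
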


\begin{proof}
 We apply Lemma~\ref{lemma:approxsums} with $z_n = x^*=xn^*$, $v_n = \left(\frac{\overline n}{n}\right)(1-sq_n)$, $\delta=\alpha-1$ and $\widetilde{q}_n=sq_n$.
 We then have for any $\ell>m$
 \begin{align}
\sum_{k=\lfloor x^* \rfloor}^\infty \frac{(\log k)^j k^{\alpha-1} }{\Gamma(\alpha+1)}\exp{-k v_n}
=&
\frac{\left(\frac{\overline n}{n}\right)^{-\alpha}}{\Gamma(\alpha+1)}  \left(\sum_{k=0}^{\ell}  \left(\frac{\partial}{\partial \delta} -\log \left(\frac{\overline n}{n}\right) \right)^j \frac{\Gamma(\alpha+k,x)}{k!} \left(-sq_n\right)^{k}  \right)\nonumber\\
\label{eq:approx_scaling_1}
&+ \O{\left(\log \left(\frac{\overline n}{n}\right)\right)^j (sq_n)^{m+1} + \left(\left(\frac{\overline n}{n}\right)\right)^{\delta} \left(\log \left(\frac{\overline n}{n}\right)\right)^j }.
 \end{align}
We now use \eqref{eq:size_pn} and \eqref{eq:r_n_adm}
\begin{align*}
 &[s^m]\left[\frac{\left(\frac{\overline n}{n}\right)^{-\alpha}}{\Gamma(\alpha+1)}  \left(\sum_{k=0}^{m}  \left(\frac{\partial}{\partial \delta} -\log \left(\frac{\overline n}{n}\right) \right)^j \frac{\Gamma(\alpha+k,x)}{k!} \left(-sq_n\right)^{k}  \right)\right]\\
 =& \,
 (-q_n)^m \frac{\left(\frac{\overline n}{n}\right)^{-\alpha}}{\Gamma(\alpha+1)}  \left(\frac{\partial}{\partial \delta} -\log \left(\frac{\overline n}{n}\right) \right)^j \frac{\Gamma(\alpha+m,x)}{m!} \\
 \sim&\,
 (-q_n)^m \frac{\left(\frac{\overline n}{n}\right)^{-\alpha}}{\Gamma(\alpha+1)}  \left(-\log \left(\frac{\overline n}{n}\right)\right)^j \frac{\Gamma(\alpha+m,x)}{m!}
  \sim\,
 (-q_n)^m \frac{\overline n}{\Gamma(\alpha+1)} \frac{\Gamma(\alpha+m,x)}{m!}\\
 \sim&\,
 \frac{(\overline n)^{1-\frac{m}{2}}}{m!}\frac{\Gamma(\alpha+m,x)}{\Gamma(\alpha+1)} \left(-\frac{\Gamma(\alpha+1,\,x)}{\Gamma(\alpha+2)}\right)^m
\end{align*}
It in now easy to see the $O(.)$ in \eqref{eq:approx_scaling_1} is uniform in $s$ for $s$ bounded.
Applying Cauchy's integral formula thus gives
\begin{align}
\label{eq:insert_scary_name_1}
 [s^m]\left[\sum_{k=\lfloor x^* \rfloor}^\infty \frac{(\log k)^j k^{\alpha-1}}{\Gamma(\alpha+1)}\exp{-k v_n}\right]
 \sim 
  \frac{(\overline n)^{1-\frac{m}{2}}}{m!}\frac{\Gamma(\alpha+m,x)}{\Gamma(\alpha+1)} \left(-\frac{\Gamma(\alpha+1,\,x)}{\Gamma(\alpha+2)}\right)^m.
\end{align}
One the other hand we have $[s^m]\left[\exp{s^*}-1\right] = (\overline n)^{\frac{m}{2}}[s^m]\left[\exp{s}-1\right] $. Combining this observation with \eqref{eq:insert_scary_name_1} then proves \eqref{eq:s^m_second_term}.
Equation~\eqref{eq:expansion_sum} then follows from \eqref{eq:s^m_second_term} with a direct computation.
\end{proof}

\begin{proof}[Proof of Proposition~\ref{prop:relies2} and Theorem~\ref{thm:cumulants}]
To determine the behaviour of $G_n$ we would like to use Lemma~\ref{lem:generating_w_n}. By \eqref{eq:generating_w_n}
\begin{align*}
\En{\mathrm{exp}\bigl(-{s^*}{w}_n({x^*}) \bigr)}
= 
\frac{1}{h_n}
[t^n] \left[\mathrm{exp}\left( g_\Theta(t) +(e^{-s^*}-1)\sum_{k=\floor{x^*} }^{+\infty} \frac{\vartheta_k}{k}t^k \right) \right].
\end{align*}
We have shown that $g_n(t)=g_\Theta(t) +(e^{-s^*}-1)\sum_{k=\floor{x^*} }^{+\infty} \frac{\vartheta_k}{k}t^k$ is log-$n$-admissible. 
Therefore Thm.~\ref{thm:generalasymptotic} 
tells us how $G_n$ behaves, and we have more precisely to 
recover three terms. In first place we collect the terms
for the asymptotic of $\exp{g_n(r_n)}$: We use Lemma~\ref{lemma:polylog_asymp} and get 

 \begin{eqnarray}
g_\Theta(r_n)&=& \frac{v_n^{-\alpha}}{\Gamma(\alpha+1)}\left(\frac{\partial}{\partial \alpha} - \log \left(\frac{\overline n}{n}\right) \right)^j\Gamma(\alpha)
+O(1).\label{eq:g_Theta_ex}
\end{eqnarray}
Computing the coefficient of $s^m$ gives
\begin{eqnarray*}
[s^m]\left[g_\Theta(r_n)\right]
&\sim& \frac{\left(\frac{\overline n}{n}\right)^{-\alpha}}{\Gamma(\alpha+1)}\left(- \log \left(\frac{\overline n}{n}\right) \right)^j\Gamma(\alpha)[s^m]\left[(1-sq_n)^{-\alpha}\right]
\\
&=&
\frac{\overline n}{\alpha} [s^m]\left[
\left(1-\frac{s}{(\overline n)^{1/2}}\frac{\Gamma(\alpha+1,\,x)}{\Gamma(\alpha+2)}\right)^{-\alpha}\right]\\
&=&
\frac{(\overline n)^{1-\frac{m}{2}}}{\alpha} [s^m]\left[
\left(1-s\frac{\Gamma(\alpha+1,\,x)}{\Gamma(\alpha+2)}\right)^{-\alpha}\right].
\end{eqnarray*}
We thus obtain
\begin{eqnarray}\label{eq:g_equation}
g_\Theta(r_n)
&=&
\frac{(\overline n)}{\alpha}\bigl(1+o(1)\bigr)+s(\overline n)^{1/2}\,\frac{\Gamma(\alpha+1,\,x)}{\Gamma(\alpha+2)}\bigl(1+o(1)\bigr)\nonumber \\
&&+\frac{s^2}{2}\frac{\Gamma(\alpha+1,\,x)^2}{\Gamma(\alpha+2)\Gamma(\alpha+1)}\bigl(1+o(1)\bigr) 
+\O{s^3(\overline n)^{-\frac{1}{2}}},\nonumber
\end{eqnarray}
where $\bigl(1+o(1)\bigr)$ is independent of $s$.
Furthermore, we get with Proposition~\ref{prop:approx_scaling}
\begin{align*}
(\exp{-s^*}-1)\sum_{k\geq\lfloor x^* \rfloor} \frac{\vth_m }{m}\exp{-kv_n}
 =& -s (\overline n)^{\frac{1}{2}}\frac{\Gamma(\alpha,x )}{\Gamma(\alpha+1)}\bigl(1+o(1)\bigr) \\
 & +s^2\left(\frac{1}{2}\frac{\Gamma(\alpha,x )}{\Gamma(\alpha+1)}-\frac{\Gamma(\alpha+1,x  )^2}{\Gamma(\alpha+1)\Gamma(\alpha+2)}\right)\bigl(1+o(1)\bigr) \\ 
 &+\O{s^3(\overline n)^{-\frac{1}{2}}}.
\end{align*}
Thirdly, we obviously have
$$
-n \log(r_n)=\overline n\bigl(1+o(1)\bigr)-s(\overline n)^{1/2} \frac{\Gamma(\alpha+1,\,x)}{\Gamma(\alpha+2)}\bigl(1+o(1)\bigr).
$$
Finally we can use a similar computation as in Proposition~\ref{prop:approx_scaling} to see that
\begin{eqnarray*}
\log(b(r_n))
&=&
C_1\log(n)+C_2\sum_{k\geq 0}s^k (\overline n)^{-\frac{k}{2}}
\end{eqnarray*}
We thus see that the contribution of $b_n$ to the coefficients of $s^m$ is of lower order.
We combine everything and obtain
\begin{align}
 \En{\mathrm{exp}\bigl(-{s^*}{w}_n({x^*}) \bigr)}
 =
 \mathrm{exp}&\left(-s (\overline n)^{\frac{1}{2}}\frac{\Gamma(\alpha,x )}{\Gamma(\alpha+1)}\bigl(1+o(1)\bigr)\right. \nonumber \\
 &+
 \frac{s^2}{2}\left(\frac{\Gamma(\alpha,x )}{\Gamma(\alpha+1)}-\frac{\Gamma(\alpha+1,x  )^2}{\Gamma(\alpha+1)\Gamma(\alpha+2)}\right)\bigl(1+o(1)\bigr)\nonumber\\
 &+
 \O{s^3(\overline n)^{-\frac{1}{2}}}\label{eq:s_cube}
 \Bigr).
\end{align}
Notice that we do not require the asymptotic behaviour of the coefficient of $s^0$ since 
$\En{\mathrm{exp}\bigl(-{s^*}{w}_n({x^*}) \bigr)}=1$ at $s=0$. 
This completes the proof of Proposition~\ref{prop:relies2} (and Theorem~\ref{thm:limit_shape_saddle}). 
The proof of Theorem~\ref{thm:cumulants} follows the same line and we thus omit it.
\end{proof}

\begin{proof}[Proof of Thm.~\ref{thm:beh_increments_saddle}]
For multiple increments, we can repeat the proof of Theorem~\ref{thm:cumulants} to compute the behaviour of the  
vector $\mathbf w_n(\mathbf x^\ast)$ with $\mathbf w_n(\mathbf x^\ast)$ as in \eqref{eq:increm} with length $\ell\geq2$.
Applying Lemma~\ref{lem:generating_w_n_x_y} to $\mathbf w_n(\mathbf x^\ast)$ shows that we have to consider the function
\begin{align}
 g_\Theta(t) +\sum_{j=1}^\ell (e^{-s_j^\ast}-1)\sum_{k=\floor{x_j^\ast} }^{\floor{x_{j+1^\ast}-1} } \frac{\vartheta_k}{k}t^k.
\end{align}
A computation as in Lemma~\ref{lemma:funct_admiss} shows that this function is also log-$n$-admissible for $r_n =e^{-v_n}$ with
\begin{eqnarray*}
&&v_n:=p_n\left(1-\frac{q_n}{\Gamma(\alpha+2)}\left(s_{\ell} \Gamma(\alpha+1,\,x_{\ell})\right.\right.\\
&&\phantom{aaaaaaaaaaa}+\left.\left. \sum_{k=1}^{\ell-1}s_{\ell-k}\left(\Gamma(\alpha+1,\,x_{\ell-1-k})-\Gamma(\alpha+1,\,x_{\ell-k})\right)\right)\right).
\end{eqnarray*}
for some $p_n$ and $q_n$ with $p_n \sim (n^\ast)^{-1}$ and $q_n \sim (n^\ast)^{-1/2}$.
We deduce from this as in \eqref{eq:g_equation} that
\begin{eqnarray}
&&g_\Theta(r_n)\sim \frac{\overline n}{\alpha}(1+\o{1})-\frac{(\overline n)^{1/2}}{\Gamma(\alpha+2)}\cdot\nonumber\\
&&\left(s_\ell\Gamma(\alpha+1,\,x_\ell)+\sum_{k=1}^{\ell-1}(s_{\ell-k-1}(\Gamma(\alpha+1,\,x_{\ell-k-1})-\Gamma(\alpha+1,\,x_{\ell-k}))\right)(1+\o{1})\nonumber\\
&&+\frac{1}{2\Gamma(\alpha+2)\Gamma(\alpha+1)}\left(s_\ell\Gamma(\alpha+1,\,x_\ell)\right.\nonumber\\
&&+\left.\sum_{k=1}^{\ell-1}(s_{\ell-k-1}(\Gamma(\alpha+1,\,x_{\ell-k-1})-\Gamma(\alpha+1,\,x_{\ell-k}))\right)^2(1+\o{1})\nonumber\\
&&+\o{1}.\label{eq:square}
\end{eqnarray}
Since the coefficients of the form $\left(\exp{-s^*_j}-1\right)\sum_{k=x_j^*}^{x_{j+1}^*-1}\frac{\vth_k}{k}r_n^k$ do not give a contribution to covariances, the mixed terms will stem 
from the expansion of the square in \eqref{eq:square}. In particular we see that the coefficient of $s_i s_j$, for $1\leq j<i<\ell$, is
$$
\frac{\left(\Gamma(\alpha+1,\,x_i)-\Gamma(\alpha+1,\,x_{i+1})\right)\left(\Gamma(\alpha+1,\,x_{j})-\Gamma(\alpha+1,\,x_{j+1})\right)}{2\Gamma(\alpha+1)\Gamma(\alpha+2)}.
$$
\end{proof}
\subsection{Functional CLT for \texorpdfstring{$w_n(\cdot)$}{wn}}
As in the randomized setting, a functional CLT can be obtained here too. Unlike the previous case though we do not have the independence of cycle counts, hence we will have to show the tightness of the fluctuations as in Sec.~\ref{sec:CLT_randomization} in two steps (cf. \cite{Ha90}). The result we aim at is, precisely as before,
\begin{theorem}
\label{thm:func_CLT_saddle}
 The process $\widetilde{w}_n^{\mathbf s}:\R^+\to\R$ \textup{(}see Thm.~ \ref{thm:limit_shape_saddle}\textup{)} converges weakly with respect to $\mathbb{P}_n$ as $n\to\infty$ to a 
continuous process $\widetilde{w}_\infty^{\mathbf s}:\R^+\to\R$ with $\widetilde{w}_\infty^{\mathbf s}(x)\sim \mathcal N(0,\,\left(\sigma_\infty^{\mathbf s}(x)\right)^2)$ and whose increments are \emph{not} independent. The covariance 
structure is given in Thm.~\ref{thm:beh_increments_saddle}.
\end{theorem}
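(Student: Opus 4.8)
The plan is to prove weak convergence of $\widetilde w_n^{\mathbf s}(\cdot)$ in the Skorokhod space $D([\delta,K])$ for every $0<\delta<K$ (then letting $\delta\downarrow0$, $K\uparrow\infty$; one can already take $\delta=0$ when $\alpha\geq1$) in the two usual steps: convergence of the finite--dimensional distributions, and tightness. The finite--dimensional distributions are essentially already at our disposal. Fix $x_\ell\geq\dots\geq x_1\geq\delta$; by telescoping, $(\widetilde w_n^{\mathbf s}(x_1),\dots,\widetilde w_n^{\mathbf s}(x_\ell))$ is a fixed linear image of the increment vector $\widetilde{\mathbf w}_n^{\mathbf s}(\mathbf x)$ of Theorem~\ref{thm:beh_increments_saddle}. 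The proof of Theorems~\ref{thm:cumulants} and \ref{thm:beh_increments_saddle} --- via Lemma~\ref{lem:generating_w_n_x_y}, the log-$n$-admissibility of the associated multi-block generating function, Theorem~\ref{thm:generalasymptotic} and the expansion \eqref{eq:square} --- in fact delivers the full joint Laplace transform
\[
\En{\mathrm{exp}\bigl(-\langle\mathbf s,\widetilde{\mathbf w}_n^{\mathbf s}(\mathbf x)\rangle\bigr)}\longrightarrow \mathrm{exp}\Bigl(\tfrac12\langle\mathbf s,\Sigma\mathbf s\rangle\Bigr),
\]
where the off--diagonal entries of $\Sigma$ are those displayed in Theorem~\ref{thm:beh_increments_saddle} and the diagonal entries are the increment variances obtained exactly as $\sigma_\infty^2$ in the single--point case. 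Hence every finite--dimensional marginal of $\widetilde w_n^{\mathbf s}$ converges to a centred Gaussian vector, which identifies the candidate limit $\widetilde w_\infty^{\mathbf s}$ together with its (non--independent) increment structure; as its covariance kernel is assembled from the functions $\Gamma(\alpha,\cdot)$ and $\Gamma(\alpha+1,\cdot)$, which are Lipschitz on $[\delta,K]$, it is continuous, and $\widetilde w_\infty^{\mathbf s}$ has a continuous modification.

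It remains to establish tightness, which by \cite[Theorem~15.6]{Bi99} --- just as in Section~\ref{sec:CLT_randomization} --- follows from the moment bound
\[
\En{\bigl(\widetilde w_n^{\mathbf s}(x^\ast)-\widetilde w_n^{\mathbf s}(x_1^\ast)\bigr)^2\bigl(\widetilde w_n^{\mathbf s}(x_2^\ast)-\widetilde w_n^{\mathbf s}(x^\ast)\bigr)^2}=\O{(x_2-x_1)^2}
\]
for $\delta\leq x_1<x\leq x_2<K$, with $x_i^\ast=x_in^\ast$. Unlike in Lemma~\ref{lemma:tightness}, the cycle counts are not independent under $\Pbn{\cdot}$, so we cannot simply multiply two block sums; instead we read the mixed moment off the generating function. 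The two increments are linear combinations of the $C_k$ with $k\in[x_1^\ast,x^\ast)$, respectively $k\in[x^\ast,x_2^\ast)$, and, the deterministic centring $\overline n\, z_n^{\mathbf s}$ agreeing with $\En{w_n}$ up to $\o{(\overline n)^{1/2}}$, the left--hand side equals $\partial_{s_1}^2\partial_{s_2}^2\big|_{\mathbf s=0}$ of $\En{\mathrm{exp}(-\langle\mathbf s,\mathbf w_n(\mathbf x)\rangle)}$ from Lemma~\ref{lem:generating_w_n_x_y} with $\ell=2$. Feeding in the log-$n$-admissibility already proved in Theorem~\ref{thm:beh_increments_saddle}, Theorem~\ref{thm:generalasymptotic} and Lemma~\ref{lemma:approxsums}, the leading behaviour is the Gaussian one,
\begin{align*}
\En{\Delta_1^2\,\Delta_2^2}=\En{\Delta_1^2}\,\En{\Delta_2^2}+2\,\bigl(\En{\Delta_1\Delta_2}\bigr)^2+\text{(lower order)},
\end{align*}
with $\Delta_1,\Delta_2$ the two increments. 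Since $\Gamma(\alpha,\cdot)$ and $\Gamma(\alpha+1,\cdot)$ are Lipschitz on $[\delta,K]$, the increment variances are $\O{x_2-x_1}$ and the cross--covariance is $\O{(x_2-x)(x-x_1)}=\O{(x_2-x_1)^2}$, so each term is $\O{(x_2-x_1)^2}$; combined with the finite--dimensional convergence this proves Theorem~\ref{thm:func_CLT_saddle}.

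The genuinely new difficulty is the uniformity of that moment bound. The identities produced by Theorem~\ref{thm:generalasymptotic} and Lemma~\ref{lemma:approxsums} carry $n$--dependent error terms, and these must be controlled uniformly over all pairs $x_1<x_2$ in $[\delta,K]$, including intervals of length $x_2-x_1$ as small as $(n^\ast)^{-1}$, below which $[x_1^\ast,x_2^\ast)$ contains only $\O{1}$ indices and the saddle--point asymptotics no longer have any room. This is exactly where the argument splits into two steps, following \cite{Ha90}: one first proves the bound on the range where $x_2^\ast-x_1^\ast\to\infty$ --- equivalently, bounds $\En{(\widetilde w_n^{\mathbf s}(x^\ast)-\widetilde w_n^{\mathbf s}(x_1^\ast))^4}=\O{(x-x_1)^2}$ by a fourth--cumulant computation and then applies the Cauchy--Schwarz inequality --- and afterwards treats the remaining small--scale range by a direct estimate of the few cycle counts involved. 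With the log-$n$-admissibility and the incomplete--Gamma asymptotics already in hand, this uniform fourth--moment control is the only new ingredient the proof requires.
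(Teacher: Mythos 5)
Your finite--dimensional step matches the paper: the joint Laplace transform of $\mathbf w_n(\mathbf x)$ from Lemma~\ref{lem:generating_w_n_x_y}, together with the log-$n$-admissibility shown in the proof of Theorem~\ref{thm:beh_increments_saddle}, gives convergence of all marginals to a centred Gaussian vector with covariance \eqref{eq:square}. The tightness step is where you diverge from the paper, and where your plan has real soft spots. You propose to read the mixed fourth moment off the Laplace transform by computing $\partial_{s_1}^2\partial_{s_2}^2|_{\mathbf s=0}$ and then invoking a Wick--Isserlis factorisation plus a two--scale split with Cauchy--Schwarz. The difficulty you flag yourself --- that differentiating saddle--point \emph{asymptotics} does not automatically yield asymptotics of derivatives, uniformly in the window length --- is precisely why the paper does not take this road. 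Instead it first establishes (Lemma~\ref{lemma:tightness_saddle_generating}) an \emph{exact} identity, using the Hansen randomisation device $\Et{\Psi}\exp{g_\Theta(t)}=\sum_n t^n h_n\En{\Psi_n}+\Psi(0,\dots)$ and the independence of cycle counts under $\mathbb P_t$: the generating function of $(\overline n)^2 h_n\En{\Delta_1^2\Delta_2^2}$ equals
\[
[t^n]\Bigl[\bigl((g_{x_1^*}^{x^*}(t)-E_{x_1}^{x})^2 + g_{x_1^*}^{x^*}(t)\bigr)\bigl((g_{x^*}^{x_2^*}(t)-E_{x}^{x_2})^2 + g_{x^*}^{x_2^*}(t)\bigr)\exp{g_\Theta(t)}\Bigr],
\]
and only \emph{then} is the saddle--point method applied once, to this single coefficient extraction (Lemma~\ref{lemma:tightness_saddle}). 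Since the quartic prefactor only contributes $\O{\log p_n}$ to the exponent, log-$n$-admissibility of the combined function is inherited from that of $g_\Theta$, and the required bound reduces to $g_{a^*}^{b^*}(r_n)(\overline n)^{-1}=\O{b-a}$ together with $g_{a^*}^{b^*}(r_n)-E_a^b=\o{b-a}$, both of which hold uniformly down to windows of a single index because the incomplete Gamma function is Lipschitz on $[\delta,K]$. This renders your macro/micro case split and fourth--cumulant--plus--Cauchy--Schwarz step unnecessary. So: same architecture (marginals via Laplace transform, tightness via the quartic moment and \cite[Theorem~15.6]{Bi99}), but the paper's key lemma is the Hansen identity giving an exact generating function for the quartic rather than your differentiation--of--asymptotics argument; if you want to pursue your route you would still need to justify the uniformity of the Wick approximation, which is essentially the same work in a less convenient form.
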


We will proceed as in the proof of Thm.~\ref{thm:func_CLT_pt}. Having shown already the behaviour of the increments in Thm.~\ref{thm:beh_increments_saddle} what we have to tackle now 
is their tightness. The proof's goal is again, analogously as Lemma~\ref{lemma:tightness}. However the evaluation of 
the corresponding expectation on the
LHS of \eqref{eq:goal_2} is more difficult this time; one possible approach is present in \cite{DePi85} and is based on P\'olya's enumeration lemma and the calculation of 
factorial moments of cycle counts. However it is easier to use the argumentation by Hansen in \cite{Ha90} to obtain an expression for the corresponding generating function. We get

\begin{lemma}%[Tightness for $\mathcal L\left(w_n\right)$]
\label{lemma:tightness_saddle_generating}
For $0\leq x_1<x\leq x_2$ arbitrary and $x^*:={x n^*}$, $x_1^*:={x_1 n^*}$ and $x_2^*:={x_2 n^*}$
\begin{align}\label{eq:lem_tightness_saddle_generating}
&(\overline n)^2\cdot h_n\En{(\widetilde w_n^{\mathbf s}(x^*)-\widetilde w_n^{\mathbf s}(x_1^*))^2 (\widetilde w_n^{\mathbf s}(x_2^*)-\widetilde w_n^{\mathbf s}(x^*))^2}\\
&=
[t^n]\left[\left((g_{x_1^*}^{x^*}(t)-E_{x_1}^{x})^2 + g_{x_1^*}^{x^*}(t) \right) \left((g_{x^*}^{x_2^*}(t)-E_{x}^{x_2})^2 + g_{x^*}^{x_2^*}(t) \right) \mathrm{exp}(g_\Theta(t)) \right]\nonumber
\end{align}
with
$g_{a}^b(z):=\sum_{a\leq j <b} \frac{\vartheta_j}{j}z^j $ 
and 
 $E_a^b = \En{\widetilde w_n^{\mathbf s}(bn^*)-\widetilde w_n^{\mathbf s}(an^*)}$ for $a<b$.
\end{lemma}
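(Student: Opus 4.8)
The plan is to follow the same strategy used in the proof of Lemma~\ref{lem:generating_w_n}, namely to pass from the expectation under $\mathbb{P}_n$ to a coefficient extraction via P\'olya's enumeration theorem (Lemma~\ref{lemma:Polya}), but now with a fourth-degree polynomial in the cycle counts in place of the exponential weight. First I would introduce centered increments. Write $w_n(x_1^*)-w_n(x^*) = \sum_{x_1^* \leq k < x^*} C_k$ and $w_n(x^*)-w_n(x_2^*) = \sum_{x^* \leq k < x_2^*} C_k$, and set $\widetilde w_n^{\mathbf s}(x^*)-\widetilde w_n^{\mathbf s}(x_1^*) = (\overline n)^{-1/2}\bigl(\sum_{x_1^*\leq k<x^*}C_k - \overline n\, E_{x_1}^x\bigr)$ (up to the deterministic shift $z_n^{\mathbf s}$, which is absorbed into $E_{x_1}^x$), and similarly for the other factor. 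Pulling out the $(\overline n)^{-1/2}$ from each of the four factors accounts for the prefactor $(\overline n)^2$ on the left-hand side.

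The key observation is the combinatorial identity for a sum of nonnegative-integer-valued variables: if $S = \sum_{k\in I}C_k$, then, after multiplying by $h_n n!$ and summing against $t^n$, each monomial $\prod_k \vartheta_k^{C_k}$ is tracked by $g_\Theta(t)$ via Lemma~\ref{lemma:Polya}, while inserting a factor $C_k$ corresponds to applying the operator $z\partial_z$ to the block $\frac{\vartheta_k}{k}z^k$. Summing over $k\in I$ and using that $\sum_{k\in I}(z\partial_z)\frac{\vartheta_k}{k}z^k = \sum_{k\in I}\vartheta_k z^k$ shows that $\mathbb{E}$ of $S$ against the measure produces, in the generating function, the block $g^b_a(t)$ together with the full exponential $\exp(g_\Theta(t))$; and $\mathbb{E}$ of $S^2$ produces $(g^b_a(t))^2 + \widetilde g^b_a(t)$ where $\widetilde g^b_a(t) = \sum_{k\in I}\vartheta_k z^k$ comes from the ``diagonal'' term $C_k(C_k-1)+C_k$ vs.\ $C_k^2$. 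In Hansen's bookkeeping the diagonal correction is exactly $g^b_a(t)$ up to the index normalization chosen here, so that $\mathbb{E}[(S - \overline n\, E)^2] \leftrightarrow (g^b_a(t)-E)^2 + g^b_a(t)$. Because the two index sets $[x_1^*,x^*)$ and $[x^*,x_2^*)$ are disjoint, the two quadratic factors multiply without cross terms (the only shared object is the common exponential $\exp(g_\Theta(t))$), which yields precisely the product form on the right-hand side of \eqref{eq:lem_tightness_saddle_generating}.

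Concretely I would carry this out in three steps: (i) expand the product of the four centered linear factors, reducing everything to expectations of the form $\mathbb{E}_n\bigl[\prod_{k}C_k^{e_k}\bigr]$ with total degree $\leq 4$ supported on the two disjoint blocks; (ii) apply Lemma~\ref{lemma:Polya} after multiplying by $h_n n! t^n/n!$ and summing over $n$, translating each $C_k$-insertion into $z\partial_z$ acting on the corresponding block of $g_\Theta$, and observing that an insertion of $C_k^2$ produces $(z\partial_z)^2$, whose ``lower'' part gives back $\vartheta_k z^k$; (iii) collect terms, recognize $(g^{x^*}_{x_1^*})^2 + g^{x^*}_{x_1^*}$ and $(g^{x_2^*}_{x^*})^2 + g^{x_2^*}_{x^*}$ after inserting the centerings $E^x_{x_1}$, $E^{x_2}_x$, and take $[t^n]$ on both sides. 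The main obstacle is the careful bookkeeping in step (ii): one must keep track of how the operator $(z\partial_z)^2$ splits into a ``quadratic'' piece $(z\partial_z \frac{\vartheta_k}{k}z^k)^2/(\cdot)$-type contribution and the genuine ``linear'' remainder, and verify that after summing over $k$ in a block the remainder is exactly $g^b_a(z)$ with the stated index range $a\leq j<b$ (as opposed to $\widetilde g$ with an extra factor of $j$); this is where the precise normalization $g^b_a(z)=\sum_{a\leq j<b}\frac{\vartheta_j}{j}z^j$ in the statement is forced. Since the paper explicitly cites \cite{Ha90} for this computation, I would present the identity and the three steps and then refer the reader there for the remaining routine verification.
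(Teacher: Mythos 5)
Your strategy is essentially the paper's: pass from the canonical expectation to a generating-function identity via the P\'olya--Hansen correspondence, then exploit the independence / factorial-moment structure of the cycle counts. The paper does this by lifting to the randomized measure $\mathbb{P}_t$, stating a Hansen-type identity $\mathbb{E}_t\left[\Psi\right]\exp(g_\Theta(t)) = \sum_{n\geq 1} t^n h_n\mathbb{E}_n\left[\Psi_n\right] + \Psi(0,0,\ldots)$, and then computing $\mathbb{E}_t\left[\Psi\right]$ exactly using that under $\mathbb{P}_t$ the $C_k$ are independent Poisson with mean $\frac{\vartheta_k}{k}t^k$ (Lemma~\ref{lemma:indep_cycle}); your three steps carry out the same calculation in operator language.

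There is, however, a technical slip in your operator bookkeeping that needs to be fixed. You claim that inserting a factor $C_k$ into the P\'olya sum corresponds to applying $z\partial_z$ to the block $\frac{\vartheta_k}{k}z^k$, which would produce $\vartheta_k z^k$. That operator in fact inserts $kC_k$, not $C_k$. Inserting $C_k$ corresponds to $a_k\partial_{a_k}$, with free weights $a_j$ as in Lemma~\ref{lemma:Polya}, and since $a_k\partial_{a_k}\exp\bigl(\sum_m\tfrac{a_m}{m}t^m\bigr) = \tfrac{a_k}{k}t^k\exp(\cdots)$, the correct rule is multiplication by the block $\frac{\vartheta_k}{k}t^k$ itself, without the extra factor of $k$. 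Repeated insertion then gives $C_k(C_k-1)\leftrightarrow\bigl(\tfrac{\vartheta_k}{k}t^k\bigr)^2$ and $C_k\leftrightarrow\tfrac{\vartheta_k}{k}t^k$, so the diagonal correction is exactly $g_a^b(t)$ with the $\frac{\vartheta_j}{j}$ normalization, matching the lemma statement. You sense the discrepancy in step (iii) (``as opposed to $\widetilde g$ with an extra factor of $j$''), but as written your operator rule would produce $\widetilde g_a^b(t)=\sum_{a\leq j<b}\vartheta_j z^j$ in place of $g_a^b(t)$, which is wrong. Replace ``apply $z\partial_z$ to the block'' by ``multiply by the block'' in step (ii), and the proof then goes through and agrees with the paper's.
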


\begin{proof}
We define for $0<t<1$ the measure $\mathbb P_t$ on $\mathfrak S:=\cup_n \mathfrak S_n$ as in Section \ref{sec:randomization}. By repeating the proof of \cite[Lemma 2.1]{Ha90} we see that
$$
\Pbt{\sum k C_k=n}=t^n h_n \exp{g_\Theta(t)}.
$$
Let now $\Psi:\,\mathfrak S \to \C$ with $\Et{|\Psi|}<\infty$ and $\Psi$ only depending one the cycles counts, i.e. $\Psi=\Psi(C_1,C_2,\dots)$.
Mimicking Hansen's strategy, one can prove for such a $\Psi$ that
\begin{equation}\label{eq:equality_Psi} 
\Et{\Psi}\exp{g_\Theta(t)}=\sum_{n\geq 1}t^n h_n \En{\Psi_n}+\Psi(0,\,0,\,0,\ldots),
\end{equation}
where $\Psi_n:\,\mathfrak S_n\to \C$ is defined by $\Psi_n=\Psi(C_1,\,\ldots,\,C_n,\,0,\,0,\,\ldots)$.
The proof of \eqref{eq:equality_Psi} is almost the same as in \cite{Ha90} and we thus omit it.
We now use 
$$
\Psi(C_1,\,C_2,\,\ldots):=
\bigl(\widetilde w_n^{\mathbf s}(x^\ast)-\widetilde w_n^{\mathbf s}(x_1^\ast)\bigr)^2 
\bigl(\widetilde w_n^{\mathbf s}(x_2^\ast)-\widetilde w_n^{\mathbf s}(x^\ast)\bigr)^2.
$$
We use the definition of $w_n^{\mathbf s}$ in Theorem~\ref{thm:limit_shape_saddle} and that 
the $C_k$ are independent with respect to $\mathbb{P}_t$ and get
\begin{align}
&(\overline n)\Et{\bigl(\widetilde w_n^{\mathbf s}(x^\ast)-\widetilde w_n^{\mathbf s}(x_1^\ast)\bigr)^2  }
=
 \Et{\left(\sum\limits_{k=x_1^\ast}^{x_1^\ast}C_k -E_{x_1}^{x}\right)^2}\nonumber\\
=&\,
\Et{\sum\limits_{k=x_1^\ast}^{x^\ast}C_k^2 +
\sum\limits_{\substack{x_1^\ast\leq k,k'<x^\ast\nonumber\\k\neq k'}}C_kC_{k'} 
- 2E_{x_1}^{x}\sum\limits_{k=x_1^\ast}^{x^\ast}C_k
+\left(E_{x_1}^{x}\right)^2}\nonumber\\
=&\,
\Et{\sum\limits_{k=x_1^\ast}^{x^\ast}C_k(C_k-1) +
\sum\limits_{\substack{x_1^\ast\leq k,k'<x^\ast\\k\neq k'}}C_kC_{k'} 
- 2E_{x_1}^{x}\sum\limits_{k=x_1^\ast}^{x^\ast}C_k
+\left(E_{x_1}^{x}\right)^2 +\sum\limits_{k=x_1^\ast}^{x^\ast}C_k }\nonumber\\
=&
\sum\limits_{k=x_1^\ast}^{x^\ast}\left(\frac{\vth_k}{k}t^k\right)^2 
+
\sum\limits_{\substack{x_1^\ast\leq k,k'<x^\ast\nonumber\\k\neq k'}}\frac{\vth_k}{k}t^k\frac{\vth_{k'}}{k'}t^{k'}
-
2E_{x_1}^{x}\sum\limits_{k=x_1^\ast}^{x^\ast}\frac{\vth_k}{k}t^k
+\left(E_{x_1}^{x}\right)^2 +\sum\limits_{k=x_1^\ast}^{x^\ast}\frac{\vth_k}{k}t^k \nonumber \\
=& 
\left(\sum\limits_{k=x_1^\ast}^{x^\ast}\frac{\vth_k}{k}t^k  - E_{x_1}^{x}\right)^2 
+\sum\limits_{k=x_1^\ast}^{x^\ast}\frac{\vth_k}{k}t^k.\label{eq:adjust}
\end{align}
This completes the proof since the cycle counts in both factors are independent.
\end{proof}

We can now prove the tightness of the process $\widetilde w_n^{\mathbf s}(x^*)$. We have
\begin{lemma}
\label{lemma:tightness_saddle} 
We have for $0\leq x_1<x\leq x_2<K$ with $K$ arbitrary 
\begin{equation}\label{eq:goal_2}
\En{(\widetilde w_n^{\mathbf s}(x^*)-\widetilde w_n^{\mathbf s}(x_1^*))^2 (\widetilde w_n^{\mathbf s}(x_2^*)-\widetilde w_n^{\mathbf s}(x^*))^2}=\O{(x_2-x_1)^2}.
\end{equation}
%with  \comment{$x^*:={x p_n^{-1}}$, $x_1^*:={x_1 p_n^{-1}}$ and $x_2^*:={x_2 p_n^{-1}}$}.
\end{lemma}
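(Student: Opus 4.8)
The plan is to feed the generating-function identity of Lemma~\ref{lemma:tightness_saddle_generating} into the saddle-point estimate of Theorem~\ref{thm:generalasymptotic}, mirroring the way the variance in \eqref{eq:center} was evaluated in the randomised setting. First I rewrite \eqref{eq:lem_tightness_saddle_generating} as
\[
\En{(\widetilde w_n^{\mathbf s}(x^*)-\widetilde w_n^{\mathbf s}(x_1^*))^2 (\widetilde w_n^{\mathbf s}(x_2^*)-\widetilde w_n^{\mathbf s}(x^*))^2}=\frac{1}{(\overline n)^2\,h_n}\,[t^n]\bigl[A(t)\,B(t)\,\exp{g_\Theta(t)}\bigr],
\]
with $A(t):=\bigl(g_{x_1^*}^{x^*}(t)-E_{x_1}^{x}\bigr)^2+g_{x_1^*}^{x^*}(t)$, $B(t):=\bigl(g_{x^*}^{x_2^*}(t)-E_{x}^{x_2}\bigr)^2+g_{x^*}^{x_2^*}(t)$ and $h_n=[t^n]\exp{g_\Theta(t)}$. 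Note that $A$ and $B$ are \emph{polynomials} in $t$ of degree $\O{n^*}$.

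Since $g_\Theta$ is $\log$-$n$-admissible with saddle point $r_n=\exp{-v_n}$, $v_n\sim(n^*)^{-1}$ as in \eqref{eq:def_t_random_2} and critical arc $\abs{\varphi}\le\delta_n=(n^*)^{-\xi}$, $\xi>1$ (cf.\ the proof of Lemma~\ref{lemma:funct_admiss}), the polynomial prefactor is harmless: each monomial $t^k$ occurring in $A,B$ has $k=\O{n^*}$, so on the arc $\abs{k\varphi}\le\O{n^*\delta_n}=\o{1}$ and hence $A(r_ne^{i\varphi})B(r_ne^{i\varphi})=A(r_n)B(r_n)(1+\o{1})$, while off the arc $\exp{g_\Theta}$ still dominates the polynomially bounded prefactor (using the width-of-convergence and monotonicity conditions exactly as in Theorem~\ref{thm:generalasymptotic}). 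Repeating that contour estimate therefore gives $[t^n][A(t)B(t)\exp{g_\Theta(t)}]=A(r_n)B(r_n)\,h_n\,(1+\o{1})$, so the left-hand side of \eqref{eq:goal_2} equals $(\overline n)^{-2}A(r_n)B(r_n)(1+\o{1})$.

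It then remains to show $A(r_n)=\O{\overline n(x-x_1)}$ and (symmetrically) $B(r_n)=\O{\overline n(x_2-x)}$, uniformly over $0\le x_1<x\le x_2<K$. By Lemma~\ref{lemma:approxsums}, as in Proposition~\ref{prop:lim_sha_rand},
\[
g_{x_1^*}^{x^*}(r_n)=\sum_{x_1^*\le k<x^*}\frac{\vth_k}{k}r_n^k=\overline n\,\frac{\Gamma(\alpha,x_1)-\Gamma(\alpha,x)}{\Gamma(\alpha+1)}(1+\o{1})+\O{x-x_1},
\]
which is $\O{\overline n(x-x_1)}$ uniformly by Lipschitz continuity of $\Gamma(\alpha,\cdot)$ on $[0,K]$. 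For the quadratic term I compare $E_{x_1}^x$ with this partial sum. Recall from the proof of Lemma~\ref{lemma:tightness_saddle_generating} (see \eqref{eq:adjust}) that the recentering constant is $E_{x_1}^x=\overline n\bigl(w_\infty^{\mathbf s}(x_1)-w_\infty^{\mathbf s}(x)+z_n^{\mathbf s}(x_1)-z_n^{\mathbf s}(x)\bigr)$; taking $z_n^{\mathbf s}$, as we may (only $z_n^{\mathbf s}=\o{1}$ is used elsewhere), to be $z_n^{\mathbf s}(y):=(\overline n)^{-1}\En{w_n(yn^*)}-w_\infty^{\mathbf s}(y)$, which is $\o{1}$ since $\En{w_n(yn^*)}=\overline n\,w_\infty^{\mathbf s}(y)(1+\o{1})$ by Theorem~\ref{thm:limit_shape_saddle} (cf.\ Remark~\ref{rem:order_E}), this becomes $E_{x_1}^x=\En{\sum_{x_1^*\le k<x^*}C_k}$. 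Now $\En{C_k}=\frac{\vth_k}{k}\,h_{n-k}/h_n$ by the Khintchine representation \eqref{eq:h_n_Khintchine}, and a second-order Taylor expansion of $m\mapsto\log h_m$ at $m=n$ --- its first derivative being $\approx v_n$ and its second derivative $\O{b(r_n)^{-1}}=\O{(n\,n^*)^{-1}}$ (cf.\ \eqref{eq:b_n_general_explicit} and \eqref{eq:b_n_estimate}) --- gives $h_{n-k}/h_n=r_n^k(1+\O{1/\overline n})$ uniformly for $k=\O{n^*}$. Hence $E_{x_1}^x=g_{x_1^*}^{x^*}(r_n)(1+\O{1/\overline n})$, so $\bigl(g_{x_1^*}^{x^*}(r_n)-E_{x_1}^x\bigr)^2=\O{(x-x_1)^2}=\O{\overline n(x-x_1)}$ and thus $A(r_n)=\O{\overline n(x-x_1)}$. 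Plugging back, the left-hand side of \eqref{eq:goal_2} is $\O{(\overline n)^{-2}\,\overline n(x-x_1)\,\overline n(x_2-x)}=\O{(x-x_1)(x_2-x)}=\O{(x_2-x_1)^2}$ by the arithmetic--geometric mean inequality.

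The hard part is the uniform control of $E_{x_1}^x$ in the last step: one must show that the recentering cancels the $\O{\overline n(x-x_1)}$-sized leading term of $g_{x_1^*}^{x^*}(r_n)$ down to an $\O{x-x_1}$ remainder, which hinges on the uniform estimate $h_{n-k}/h_n=r_n^k(1+\O{1/\overline n})$ for $k$ up to the order of $n^*$ --- equivalently on controlling $\frac{d^2}{dm^2}\log h_m$. In the randomised proof of Lemma~\ref{lemma:tightness} this step was immediate because there the centering is exactly the $\mathbb{P}_t$-mean; here it is the $\mathbb{P}_n$-mean, and it is this $\mathbb{P}_n$-versus-$\mathbb{P}_{r_n}$ comparison that makes the two factors collapse to the right order. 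A secondary, more routine point is justifying the saddle-point evaluation with a prefactor whose degree grows with $n$, but this follows the template already established for $\log$-$n$-admissibility.
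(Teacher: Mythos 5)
Your proof follows the same overall outline as the paper: insert the generating-function identity of Lemma~\ref{lemma:tightness_saddle_generating} into a saddle-point estimate, establish $A(r_n)=\O{\overline n(x-x_1)}$ and $B(r_n)=\O{\overline n(x_2-x)}$, and conclude. Where you diverge is the crucial cancellation $\bigl(g_{x_1^*}^{x^*}(r_n)-E_{x_1}^x\bigr)^2=\O{(x-x_1)^2}$. The paper gets this by differentiating the generating function \eqref{eq:generating_w_n_x_y} at $s_1=0$ to represent $E_{x_1}^x=h_n^{-1}[t^n]\bigl[g_{x_1^*}^{x^*}(t)\exp{g_\Theta(t)}\bigr]$ and then re-using log-$n$-admissibility; you instead write $E_{x_1}^x=\sum_k\frac{\vth_k}{k}h_{n-k}/h_n$ and compare termwise with $g_{x_1^*}^{x^*}(r_n)=\sum_k\frac{\vth_k}{k}r_n^k$ via a second-order Taylor expansion of $m\mapsto\log h_m$. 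Both routes are valid and of comparable length; yours makes the $\mathbb{P}_n$-versus-$\mathbb{P}_{r_n}$ comparison completely explicit (which is conceptually pleasant), at the price of having to justify the uniform estimate $h_{n-k}/h_n=r_n^k(1+\O{1/\overline n})$ for $k=\O{n^*}$. The second-derivative bound $\partial_m^2\log h_m\approx -1/b_m(r_m)$ you invoke is correct in principle, but it needs the saddle-point asymptotics of $h_m$ to be uniform and twice differentiable in $m$ over the range $[n-Kn^*,n]$; the paper's saddle-point representation of $E_{x_1}^x$ sidesteps this. Formally your identity for $\En{C_k}$ follows from Lemma~\ref{lemma:Polya}, not from \eqref{eq:h_n_Khintchine} as cited, but this is cosmetic.

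One intermediate claim as stated is not accurate: on the \emph{full} minor arc $|\varphi|\le\delta_n$ you do not have $A(r_ne^{i\varphi})B(r_ne^{i\varphi})=A(r_n)B(r_n)(1+\o{1})$. It is true that $|k\varphi|=\o{1}$ gives $g_{x_1^*}^{x^*}(r_ne^{i\varphi})=g_{x_1^*}^{x^*}(r_n)(1+\o{1})$, but the recentred square $\bigl(g_{x_1^*}^{x^*}(r_ne^{i\varphi})-E_{x_1}^x\bigr)^2$ is very sensitive to that $\o{1}$ \emph{multiplicative} perturbation, since $E_{x_1}^x$ cancels $g_{x_1^*}^{x^*}(r_n)$ almost exactly: near $|\varphi|=\delta_n$ the perturbation $g_{x_1^*}^{x^*}(r_n)\cdot\o{1}$ dwarfs $g_{x_1^*}^{x^*}(r_n)-E_{x_1}^x=\O{x-x_1}$, and a short computation shows $A(r_ne^{i\delta_n})/A(r_n)\sim(n^*)^{\alpha+2-2\xi}(\log n^*)^{-j}(x-x_1)\to\infty$ because $\xi<\frac{\alpha+2}{2}$. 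This does not break the proof — the Gaussian weight $\exp{-\varphi^2 b_n(r_n)/2}$ makes the boundary region exponentially negligible, so the final $\O{\cdot}$ estimate survives — but the reasoning should go through the centred-Gaussian concentration (as in the paper, which includes $\log A$ and $\log B$ in the log-$n$-admissible function and observes they are $\O{\log p_n}$, i.e.\ sub-leading) rather than through a uniform $(1+\o{1})$ on the whole arc.
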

\begin{proof}
We want to apply the saddle-point method to the sequence of functions
$$
g_n(t)
:=
\exp{g_\Theta(t)
+
\log\left((g_{x_1^*}^{x^*}(t)-E_{x_1}^{x})^2 + g_{x_1^*}^{x^*}(t) \right)
+
\left((g_{x^*}^{x_2^*}(t)-E_{x_1}^{x})^2 + g_{x^*}^{x_2^*}(t) \right)
}
$$ 
to extract coefficients. Our first target is to show the log-$n$-admissibility for
 $r_n = e^{-p_n}$ with $p_n$ as in of Lemma~\ref{lemma:funct_admiss}.
Note  that the order of magnitude of $\log\left(g_{x_1^*}^{x^*}(r_n)-E_{x_1}^{x})^2 + g_{x_1^*}^{x^*}(r_n)\right)$ (and similarly for $x_2$) is $\O{\log p_n}$, which is
lower 
than that of $g_\Theta$ as of \eqref{eq:g_Theta_ex}. This tells us that the proof of log-$n$-admissibility goes through without major modifications. Hence we can safely use 
\eqref{eq:lem_tightness_saddle_generating}. It tells us that 
\begin{eqnarray}
&&\En{(\widetilde w_n^{\mathbf s}(x^*)-\widetilde w_n^{\mathbf s}(x_1^*))^2 (\widetilde w_n^{\mathbf s}(x_2^*)-\widetilde w_n^{\mathbf s}(x^*))^2}\nonumber
\\
&&=\frac1{h_n}
\left(\frac1{\overline n}\right)^2[t^n]\left[\left((g_{x_1^*}^{x^*}(t)-E_{x_1}^{x})^2 + g_{x_1^*}^{x^*}(t) \right)\cdot\right.\nonumber\\
&&\left.\cdot \left((g_{x^*}^{x_2^*}(t)-E_{x}^{x_2})^2 + g_{x^*}^{x_2^*}(t) \right) 
\mathrm{e}^{(g_\Theta(t))} \right]\label{eq:today}.
\end{eqnarray}
Differentiating \eqref{eq:generating_w_n_x_y} with respect to $s_1$ and substituting $s_1=0$ shows that
 \begin{align}
   E_{x_1}^x = \En{\widetilde w_n^{\mathbf s}(x^*)-\widetilde w_n^{\mathbf s}(x_1^*)}
   =
   \frac{1}{h_n}[t^n]\left[ g_{x^*}^{x_2^*}(t) \mathtt{exp}(g_\Theta(t))\right]
 \end{align}
This function is again log-$n$-admissibility with the same $r_n$. It is thus straightforward to show that $g_{x_1^*}^{x^*}(r_n)-E_{x_1}^x =\o{x-x_1}$. 
Therefore 
$$
\left((g_{x_1^*}^{x^*}(r_n)-E_{x_1}^{x})^2 + g_{x_1^*}^{x^*}(r_n)\right)(\overline n)^{-1}=\O{g_{x_1^*}^{x^*}(r_n)(\overline n)^{-1}}.
$$
As it was shown in the proof of Lemma \ref{lemma:tightness} we have that $g_{x_1^*}^{x^*}(r_n)(\overline n)^{-1}=\O{(x-x_1)}$. Similar considerations apply for $x_2$. Hence we can say that 
the RHS of \eqref{eq:today} yields
$$
h_n^{-1}[t^n]\left[\mathrm{exp}(g_\Theta(t)) \right]\O{(x_2-x_1)^2}=\O{(x_2-x_1)^2}.
$$
\end{proof}

\section*{Acknowledgments} We thank Sabine Jansen for pointing out a mistake in a previous version of the paper. The research of  D. Z. was partially supported by grant CRC 701 (Bielefeld University).

\appendix

\section{Euler Maclaurin formula with non integer boundaries}
\label{sec:eulermac}

We prove in this section a slight extension of Euler Maclaurin formula, which allows to deal also with non-integer summation limits.

\begin{theorem}\label{thm:Apostol}
 Let $f:\R\to\R$ be a smooth function, $B_k(x)$ be the Bernoulli polynomials and $c<d$ with  $c,d\in\R$. 
We then have for $p\in\N$
\begin{align}
 \sum_{\lfloor c\rfloor \leq k < d} f(k)
 =&
\int_c^d f(x)\,  \mathrm d x - B_1(d-\lfloor d\rfloor) f(d) - B_1(c-\lfloor c\rfloor) f(c)
\\
&+ 
\sum_{k=1}^p (-1)^{k+1} \frac{B_{k+1}(d-\lfloor d\rfloor) f^{(k)}(d) - B_{k+1}(c-\lfloor c\rfloor) f^{(k)}(c)}{k!}\nonumber\\
&+
\frac{(-1)^{p+1}}{(p+1)!} \int_c^d B_{p+1}(x-\lfloor x\rfloor) f^{(p+1)}(x)\, \mathrm d x\nonumber
\end{align}
\end{theorem}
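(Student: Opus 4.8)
The plan is to reduce the statement to the classical Euler--Maclaurin formula with integer endpoints, which is carried out in detail in \cite{Ap99,Ap84}, by an explicit integration-by-parts argument that keeps track of the fractional parts $c-\lfloor c\rfloor$ and $d-\lfloor d\rfloor$; the parameter $p$ is then treated by induction. Throughout I write $\widetilde B_k(x):=B_k(x-\lfloor x\rfloor)$ for the periodized Bernoulli functions and use the standard facts that $\widetilde B_k'(x)=k\,\widetilde B_{k-1}(x)$ for $x\notin\Z$, that $\widetilde B_k$ is continuous for $k\ge 2$ because $B_k(0)=B_k(1)$ in that range, and that $\widetilde B_1$ has a jump of $-1$ at every integer. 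Since $[c,d]$ is bounded and $f$ is smooth, every integral that appears is finite and every integration by parts is legitimate, so no convergence hypotheses need to be verified.

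For the base case $p=0$ I would split the interval at its interior integers, $\int_c^d=\int_c^{\lceil c\rceil}+\sum_{\lceil c\rceil\le k<\lfloor d\rfloor}\int_k^{k+1}+\int_{\lfloor d\rfloor}^d$ (with the obvious degenerate reading when $\lfloor c\rfloor=\lfloor d\rfloor$), and integrate $\int \widetilde B_1(x)f'(x)\,\mathrm d x$ by parts on each piece; on each piece $\widetilde B_1$ is affine, so its jumps play no role. A full subinterval $[k,k+1]$ contributes the textbook term $\tfrac12(f(k)+f(k+1))-\int_k^{k+1}f$, whose half-weights telescope across the interior integers into $\sum_{\lfloor c\rfloor\le k<d}f(k)$, while the two boundary pieces supply the endpoint terms built from $B_1(c-\lfloor c\rfloor)$ and $B_1(d-\lfloor d\rfloor)$ together with the missing pieces $\int_c^{\lceil c\rceil}f$ and $\int_{\lfloor d\rfloor}^d f$ of $\int_c^d f$. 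Collecting everything gives the formula for $p=0$.

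For the inductive step, suppose the identity holds with remainder of order $p$, that is with the term $\frac{(-1)^{p+1}}{(p+1)!}\int_c^d\widetilde B_{p+1}(x)f^{(p+1)}(x)\,\mathrm d x$. Using $\widetilde B_{p+1}=\tfrac1{p+2}\widetilde B_{p+2}'$ (legitimate since $\widetilde B_{p+2}$ is continuous and piecewise $C^1$, the finitely many corners at integers being harmless) and integrating by parts once, the boundary contribution is exactly the $(p+1)$-st summand $(-1)^{p+2}\frac{B_{p+2}(d-\lfloor d\rfloor)f^{(p+1)}(d)-B_{p+2}(c-\lfloor c\rfloor)f^{(p+1)}(c)}{(p+1)!}$ of the stated sum and the leftover integral is the order-$(p+2)$ remainder, which is the identity with parameter $p+1$. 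The one point that genuinely needs care — and the step I would double-check — is the bookkeeping at the endpoints: whether and with what weight $f(\lfloor c\rfloor)$ and $f(\lfloor d\rfloor)$ enter the sum is dictated by the values of $\widetilde B_k$ at $c-\lfloor c\rfloor$ and $d-\lfloor d\rfloor$ and by the left/right limit convention when $c$ or $d$ is itself an integer, and it is precisely here that the statement differs from the textbook one; the remaining manipulations are the routine computations of \cite{Ap99,Ap84}.
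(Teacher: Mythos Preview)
Your proposal is correct and follows essentially the same route as the paper's proof: both establish the $p=0$ case by integrating $B_1(y-\lfloor y\rfloor)f'(y)$ by parts on the pieces between consecutive integers, with the fractional boundary pieces $[\lfloor d\rfloor,d]$ and $[c,\lceil c\rceil]$ producing the $B_1(d-\lfloor d\rfloor)$ and $B_1(c-\lfloor c\rfloor)$ terms, and then pass to general $p$ by successive integration by parts using $B_{k+1}'=(k+1)B_k$. The only difference is organizational---the paper first derives the integer-endpoint formula on $[\lfloor c\rfloor,\lfloor d\rfloor]$ and then adjusts the two ends separately, whereas you split $[c,d]$ at all interior integers from the start---but the underlying computations are identical.
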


\begin{proof}
The proof of this theorem follows the same lines as the proof of the Euler–Maclaurin summation formula with integer summation limits, see for instance \cite[Theorem 3.1]{Ap84}. 
We give it here though for completeness. Our proof considers only the case $d\notin\Z$. The argumentation for $d\in\Z$ is completely similar.
One possible definition of the Bernoulli polynomials is by induction: 
\begin{align}
B_0(y) &\equiv 1,\\
B_k^\prime(y)&= kB_{k-1}(y)\ \text{ and } \int_0^1 B_k(y)\, \mathrm d y =1 \text{ for } k\geq 1.
\end{align}
In particular, we have $B_1(y) = y-\frac{1}{2}$.
We now have for $m\in\Z$
\begin{align*}
\int_{m}^{m+1} f(y)\, \mathrm d y
&=
\int_{m}^{m+1} B_0(y-m)f(y)\, \mathrm d y\\
&=
[B_1(y-m)f(y)]|_{y=m}^{m+1} - \int_{m}^{m+1} B_1(y-m)f'(y)\, \mathrm d y\\
&=\frac{1}{2} f(m) +\frac{1}{2} f(m+1) -  \int_{m}^{m+1} B_1(y-\lfloor y\rfloor)f'(y)\, \mathrm d y.
\end{align*}
since $B_1(0)=-\frac{1}{2}$ and $B_1(1)=\frac{1}{2}$. We obtain
\begin{align*}
\sum_{k=\lfloor c\rfloor}^{\lfloor d\rfloor} f(k)
 =&
\int_{\lfloor c\rfloor}^{\lfloor d\rfloor} f(x)\, \mathrm d x  + \frac{1}{2} f(\lfloor c\rfloor) 
+  \frac{1}{2} f(\lfloor d\rfloor) 
+ \int_{\lfloor c\rfloor}^{\lfloor d\rfloor} B_1(y-\lfloor y\rfloor)f'(y)\, \mathrm d y.
\end{align*}
Furthermore, we use 
\begin{align*}
\int_{\lfloor d\rfloor}^{d} f(y)\, \mathrm d y
&=
\frac{1}{2} f(\lfloor d\rfloor) +B_1(d-\lfloor d\rfloor) f(d) -  \int_{\lfloor d\rfloor}^{d} B_1(y-\lfloor y\rfloor)f'(y)\, \mathrm d y.
\end{align*}
and get
\begin{align*}
\sum_{k=\lfloor c\rfloor}^{\lfloor d\rfloor} f(k)
 =&
\int_{\lfloor c\rfloor}^{d} f(x)\, \mathrm d x  + \frac{1}{2} f(\lfloor c\rfloor) 
-B_1(d-\lfloor d\rfloor) f(d)
+ \int_{\lfloor c\rfloor}^{d} B_1(y-\lfloor y\rfloor)f'(y)\, \mathrm d y.
\end{align*}
The argumentation for replacing $\lfloor c\rfloor$ by $c$ is similar. One gets 
\begin{align*}
\sum_{\lfloor c\rfloor\leq k < d} f(k)
 =&
\int_{c}^{d} f(x)\, \mathrm d x  -B_1(c-\lfloor c\rfloor) f(c) 
-B_1(d-\lfloor d\rfloor) f(d)\\
&+ \int_{c}^{d} B_1(y-\lfloor y\rfloor)f'(y)\, \mathrm d y.
\end{align*}
The theorem now follows by successive partial integration of $\int_{c}^{d} B_1(y-\lfloor y\rfloor)f'(y)\, \mathrm d y$.
\end{proof}

\bibliographystyle{acm}
\bibliography{literatur}

\end{document}